\newtheorem{set2}{Satz}[section]
\newtheorem{theorem}[set2]{Theorem}
\newtheorem{definition}[set2]{Definition}
\newtheorem{lemma}[set2]{Lemma}
\newtheorem{notation[set2]}{Notation}
\newtheorem{proposition}[set2]{Proposition}
\newtheorem{remark}[set2]{Remark}
\newcommand{\ep}{\hfill{$\blacksquare$}}
\newenvironment{proof}[1][Proof]{\textbf{#1.} }{\
\\}
\def\Xint#1{\mathchoice{\XXint\displaystyle\textstyle{#1}}{\XXint\textstyle\scriptstyle{#1}}
{\XXint\scriptstyle\scriptscriptstyle{#1}}{\XXint\scriptstyle\scriptscriptstyle{#1}}\!\int}
\def\XXint#1#2#3{{\setbox0=\hbox{$#1{#2#3}{\int}$}
\vcenter{\hbox{$#2#3$}}\kern-.5\wd0}}
\def\dashint{\Xint-}
\newcommand{\R}{\mathbb{R}}
\newcommand{\N}{\mathbb{N}}
\newcommand{\C}{\mathcal}
\newcommand{\ol}{\overline}
\newcommand{\dx}{\,\mathrm dx}
\newcommand{\dt}{\,\mathrm dt}
\newcommand{\ds}{\,\mathrm ds}
\newcommand{\esssup}{\mathrm{ess\,sup}}
\begin{document}

\date{December 2, 2010}
\selectlanguage{english}



\setlength{\parindent}{0em}
\setlength{\parskip}{0.5em}



\begin{center}
\Large
	Existence results for diffuse interface models describing phase separation and damage
\end{center}
\begin{center}
	Christian Heinemann\footnote{Weierstrass Institute for Applied Analysis and Stochastics (WIAS), Mohrenstr. 39, 10117 Berlin.\\
	E-mail: \texttt{christian.heinemann@wias-berlin.de} and
\texttt{christiane.kraus@wias-berlin.de}\\
		This project is partially supported by the DFG project A~3
		``Modeling and sharp interface limits of local and non-local 
		generalized Navier-Stokes-Korteweg Systems''.},
	Christiane Kraus$^1$
\end{center}
\begin{center}
	December 2, 2010
\end{center}
\vspace{2mm}

\noindent
{\it Abstract:}
	In this paper, we analytically investigate multi-component Cahn-Hilliard and Allen-Cahn systems which are coupled with elasticity and
	uni-directional damage processes.
	The free energy of the system is of the form $\int_\Omega\frac{1}{2}\mathbf\Gamma\nabla c:\nabla c
	+\frac{1}{2}|\nabla z|^2+W^\mathrm{ch}(c)+W^\mathrm{el}(e,c,z)\,\mathrm dx$
	with a  polynomial or logarithmic chemical energy density  $W^\mathrm{ch}$, an
	inhomogeneous elastic energy density   $W^\mathrm{el}$ and a quadratic structure of the gradient of the damage variable $z$.
	For the corresponding elastic Cahn-Hilliard and Allen-Cahn
	systems coupled with uni-directional damage processes, we present an appropriate
	notion of weak solutions and prove existence results based on certain regularization methods
	and a higher integrability result for the strain $e$.
\\[2mm]
{\it Key Words:}
	Cahn-Hilliard systems, Allen-Cahn systems, phase separation, damage,
	elliptic-parabolic systems, energetic solution, weak solution, doubly nonlinear
	differential inclusions, existence results, 
	rate-dependent systems, logarithmic free energy.\\[2mm]
{\it AMS Subject Classifications:}
	35K85,    	
	49J40,     	
	49S05,       
	74C10,   	
	35J50         
	35K35,    	
	35K55

\section{Introduction}
	Phase separation and damage are common phenomena in many fields,
	including material sciences, biology and chemical
	reactions. Such microstructural processes take place to reduce the
	total free energy, which may include bulk chemical energy, interfacial 
	energy and elastic strain energy.

	The knowledge of the mechanisms inducing phase separation and damage 
	processes is very important for technological applications as for
	instance in the area of micro-electronics due to the ongoing 
	miniaturization. The materials used in this area  are  typically
	alloys consisting of mixtures of several components (cf. \cite{HCW91}).

	Phase separation and damage processes are usually described
	by two separate models in the mathematical literature. To describe
	phase separation processes for alloys, phase-field models of
	Cahn-Hilliard and Allen-Cahn type coupled  with elasticity are
	well adapted. On the other hand, damage processes for standard materials are
	often modelled as unilateral processes within a gradient-theory \cite{FN96}.
	A phase-field approach which describes {\it both} phase
	separation and damage processes in a unifying model
	has been recently introduced in \cite{HeiKraus}.
	
	The main objective of this work is to prove under general
        assumptions existence results for  multi-component systems where Cahn-Hilliard as well as Allen-Cahn equations
				are \textit{coupled} with rate-dependent damage differential inclusions for elastic
        materials.
        We are interested in free energies of the system which may contain a 
        chemical energy of {\it logarithmic} or {\it polynomial type}, an {\it inhomogeneous}
        elastic energy and a {\it quadratic term} of the gradient of the damage
        variable. To this end, we establish some regularization methods which
        enable us to show existence results for gradient terms  $|\nabla z|^p$
        of the damage variable $z$ in the free energy even if the assumption $p>n$ ($n$
        space dimension) is dropped.
        In contrast to \cite{Mielke06,HeiKraus} now the physical meaningful term $|\nabla z|^2$
        can be treated, cf.~\cite{FN96, Fre02}. In addition, we also provide a higher
        integrability result for the strain tensor. As a consequence, the chemical free
        energy may also have a logarithmic structure such that we are not restricted
        to polynomial growth as in \cite{HeiKraus}.
	We focus on the modelling of  rate-dependent damage processes but we would like to mention that the
        results of this work can be extended to rate-independent systems 
        (i.e.~the dissipation potential is homogeneous of degree one) 
        by some modifications.
	In the following, we will introduce the model formally. 

	The elastic material we want to consider in this work is an $N$ component alloy occupying a bounded
	Lipschitz domain $\Omega\subseteq\mathbb R^n$.
	To account for phase separation, deformation and damage processes in \textit{one} model,
	a state at a fixed time point is described by a triple $(u,c,z)$ where $u:\Omega\rightarrow\mathbb R^n$ denotes the
	deformation, $c:\Omega\rightarrow\mathbb R^N$ the vector of the chemical concentrations and $z:\Omega\rightarrow\mathbb R$ the damage
	variable.
	The meaning of the variables and its governing evolutionary process is explained more explicitly below.
	
	The mixture of the alloy is described by a phase field vector
	$c=(c_1,\ldots,c_N)$, where the element $c_k$ for $k=1, \dots,N$,
	denotes the concentration of the component $k$.
	Therefore, we will restrict the state space for $c$ to the physically
	meaningful condition $\sum_{j=1}^N c_j=1$ in $\Omega$.
	The constraint $c_k>0$, $k=1,\ldots,N$, in $\Omega$ is also used for logarithmic chemical potentials (see below).

	If an alloy is cooled down below a critical temperature
	then usually  spinodal decomposition and
	coarsening phenomena occur. Well established models for describing
	such effects are the Cahn-Hilliard and Allen-Cahn equations, which
	describe mass preserving and mass non-preserving phase
	separation in solids, cf.~\cite{Cahn61, LC78, Gurtin96,AC79,CN94} for modelling aspects.
	Analytical investigations of Cahn-Hilliard equations can be found in
	\cite{Bonetti02, GarckeHabil, CMP00, Garcke052, Garcke05, Pawlow} and for Allen-Cahn equations in \cite{BS04, CP08, CGPS10}, respectively.
	The essential difference between these two equations is
	that the Cahn-Hilliard equation is a fourth order parabolic evolutionary equation
	expressible as a $H^{-1}$ gradient flow of the free energy with
	respect to $c$ whereas the Allen-Cahn equation is a second order
	parabolic equation arising from an $L^2$ gradient flow.
	More precisely,
	\begin{align}
	\label{eqn:diffusionEquation}
		\begin{aligned}
			&\text{Allen-Cahn:}&&\partial_t c = -\mathbb M \Big(-\mathrm{div}(\mathbf\Gamma\nabla c)+W_{,c}^\mathrm{ch}(c)+W_{,c}^\mathrm{el}(e(u),c,z)\Big),\\
			&\text{Cahn-Hilliard:}&&\partial_t c = \mathrm{div}\Big(\mathbb M\nabla\Big(
				-\mathrm{div}(\mathbf\Gamma\nabla c)+W_{,c}^\mathrm{ch}(c)+W_{,c}^\mathrm{el}(e(u),c,z)\Big)\Big).
		\end{aligned}
	\end{align}
	Here, $W^\mathrm{ch}$ denotes the chemical energy density, $W^\mathrm{el}$ the elastic energy density,
	$\mathbb M$ the mobility matrix satisfying $\sum_{l=1}^N \mathbb M_{kl}=0$ for all $k=1,\ldots,N$
	and $\mathbf\Gamma$ the gradient energy tensor which is a fourth order symmetric and positive definite tensor,
	mapping matrices from $\mathbb R^{N\times n}$ into itself.

	In this work, $W^\mathrm{ch}$ may be a chemical energy density of polynomial type, i.e. $W^\mathrm{ch}(c)=W^\mathrm{ch,pol}(c)$, or
	of logarithmic type, i.e. $W^\mathrm{ch}(c)=W^\mathrm{ch,log}(c)$ (see \eqref{eqn:growthEst8}).
	Note that phase separation only arises if the matrix $A$ in \eqref{eqn:growthEst8} is non-positive definite since the
	first term in \eqref{eqn:growthEst8} is convex.

	Elastic behaviour is modelled by a deformation variable $u$ so that each material point $x\in\Omega$ from the
	reference configuration is located at $x+u(x)$.
	We use the assumption that the strain $e$ is sufficiently small so
	that we can work with the linearized strain tensor given by $e(u)=\frac 12(\nabla u+(\nabla u)^t)$.
	In this work, we will neglect inertia effects
	$\rho\ddot u$ and volume forces $l$. Therefore, the momentum balance equation $\mathrm{div}(\sigma)+l=\rho\ddot u$ from the continuum mechanics
	becomes a quasi-static force equation, i.e.
	\begin{align}
	\label{eqn:quasiStaticEq}
		\mathrm{div}(\sigma)=0.
	\end{align}
	The stress tensor $\sigma$ is defined by $W_{,e}^\mathrm{el}$, i.e. as
	the derivative of the elastic energy with respect to the strain.
	
	Analytical results for multi-component Cahn-Hilliard equations coupled with elastic deformations can be found in
	\cite{GarckeHabil} while Allen-Cahn systems with elasticity are, for instance,
	studied in \cite{Blesgen05}.
	Finite element error estimates of Cahn-Hilliard equations with logarithmic free energies and concentration dependent mobilities
	are derived in \cite{BB99}.
	Recent numerical results for Cahn-Hilliard and Allen-Cahn equations can be found in \cite{BM10}.
	It turns out that different elastic
	moduli of the phases in the mixture influence the rate of coarsening and the morphology
	of the phases decisively \cite{DM00}.
	Numerical investigations of elastic Cahn-Hilliard systems are conducted in \cite{GRW01}.
	
	The damage process we want to consider in this paper is uni-directional,
	i.e.~it can only increase in time and the material is not able to heal itself.
	The phase field variable $z$ satisfying $0\leq z\leq 1$ is interpreted as damage in a
	way that $z(x)=1$ stands for a non-damaged
	and $z(x)=0$ for a maximally damaged material point $x \in \Omega$.
	We assume that the damage in our model is \textit{not complete} which means that a maximal damaged part has still elastic properties.
	These constraints lead to a differential inclusion formulation for the
	evolution of $z$ which relates the derivative of the energy dissipation of the system depending on the rate of damage with the derivative of
	the free energy with respect to $z$.
	More precisely, we consider the doubly nonlinear differential inclusion
	(cf. \cite{FN96})
	\begin{align}
	\label{eqn:differentialInclusion}
			0\in\partial\rho(\partial_t z)-\Delta z+W_{,z}^\mathrm{el}(e(u),z)
				+\partial I_{[0,\infty)}(z).
	\end{align}
	The energy dissipation density due to damage progression is given by $\rho$ where we assume the structure
	\begin{align*}
		\rho(\dot z)=-\alpha \dot z+\frac\beta2|\dot z|^2+ I_{(-\infty,0]}(\dot z)
	\end{align*}
	with $\alpha,\beta>0$.
	Because of the quadratic term $\frac\beta2|\dot z|^2$ the damage evolution is called rate-dependent whereas $\beta=0$ would correspond to
	rate-independent systems.
	See \cite{Mielke05,Efendiev06, Mielke10, MT09} for analytical results on rate-independent damage
	and numerical experiments (without phase separation). We also refer to \cite{BSS05, FK09} for further
	analytical investigations of damage models.
	In comparison to \cite{HeiKraus} we use a gradient-of-damage theory with the Laplacian $\Delta z$ in \eqref{eqn:differentialInclusion}
	instead of a $p$-Laplacian $\mathrm{div}(|\nabla z|^{p-2}\nabla z)$ with $p>n$.

	In conclusion, the systems we would like to consider in this work are governed by \eqref{eqn:diffusionEquation}, \eqref{eqn:quasiStaticEq}
	and \eqref{eqn:differentialInclusion} and can be rewritten as
	\begin{align}
	\begin{aligned}
		\left.
		\begin{cases}
			\partial_t c = -\mathcal S w&\text{ in }\Omega_T,\\
			w=\mathbb P(-\mathrm{div}(\mathbf\Gamma\nabla c)+W_{,c}^\mathrm{ch}(c)+W_{,c}^\mathrm{el}(e(u),c,z))&\text{ in }\Omega_T,\\
			\mathrm{div}(\sigma)=0&\text{ in }\Omega_T,\\
			\partial\rho(\partial_t z)-\Delta z+W_{,z}^\mathrm{el}(e(u),z)+\partial I_{[0,\infty)}(z)\ni 0&\text{ in }\Omega_T,\\
		\end{cases}
		\right\}
	\end{aligned}
	\label{eqn:unifyingModelClassical}
	\tag{{$S_0$}}
	\end{align}
	where $w$ denotes the chemical potential.
	Here, the matrix $\mathbb P$  denotes the orthogonal
	projection of $\mathbb R^N$ onto the tangent space
	$T\Sigma=\big\{x\in\mathbb R^N\,|\,\sum_{k=1}^N x_k=0\big\}$ of the
	affine plane\linebreak $\Sigma:=\big\{x\in\mathbb R^N\,\big|\,\sum_{l=1}^N x_k=1\big\}$.
	The operator $\mathcal S$ determines whether we have an Allen-Cahn or a Cahn-Hilliard type diffusion of the system.
	More precisely,
	\begin{align}
	\begin{aligned}
		&\text{Allen-Cahn:}&&\mathcal S:L^2(\Omega;\mathbb R^N)\rightarrow L^2(\Omega;\mathbb R^N),&&\C S(f):= \mathbb Mf,\\
		&\text{Cahn-Hilliard:}&&\mathcal S:H^1(\Omega;\mathbb R^N)\rightarrow \big(H^1(\Omega;\mathbb R^N)\big)^\star,&&
			\C S(f):=\left\langle \mathbb M\nabla f,\nabla\cdot\right\rangle_{L^2}.\\
	\end{aligned}
	\label{eqn:operatorS}
	\end{align}
	In the Cahn-Hilliard case, the operator $\mathcal S$ is invertible when restricted to $\mathcal S:Y\rightarrow\mathcal D$
	where the spaces $Y$ and $\mathcal D$ are defined as
	\begin{align}
		&Y:=\Big\{c\in H^1(\Omega;\mathbb R^N)\,|\,\int_\Omega c=0,\;\sum_{k=1}^N c_k=0\Big\},\notag\\
		&\mathcal D:=\Big\{c^\star\in \left(H^1(\Omega;\mathbb R^N)\right)^\star\,|\,\langle c^\star,c\rangle_{(H^1)^\star\times H^1}=0
		\text{ for all }c=d(x)(1,\ldots, 1),\notag\\
	\label{eqn:Dspace}
		&\hspace{6em}\text{ where }d\in H^1(\Omega)\text{ and for all }c=e_k,\;k=1,\ldots N\Big\}\;\;\;\;e_k:\;k\text{-th unit function}.
	\end{align}
	We need to impose some restrictions on the mobility matrix $\mathbb
	M$.  We assume that  $\mathbb M$ is symmetric and positive definite on
	the tangent space $T\Sigma$. In addition, due to the constraint
	$\sum_{k=1}^N c_k=1$, $\mathbb M$ has to satisfy the property
	$\sum_{l=1}^N \mathbb M_{kl}=0$ for all $k=1,\ldots,N$. Note, that
	$\mathbb M= \mathbb M \, \mathbb P$. 
 
	We abbreviate  $ D_T:=(0,T)\times D$ and $ (\partial\Omega)_T:=(0,T)\times
	\partial\Omega$, where $D\subseteq\partial\Omega$ with $\mathcal
	H^{n-1}(D)>0$ denotes the Dirichlet boundary. The initial-boundary conditions of our systems are summarized as follows:
	\begin{align}
	\begin{aligned}
			c(0)=c^0&\text{ in }\Omega,&\qquad\qquad\sigma\cdot\overrightarrow{\nu}=0&\text{ on }(\partial\Omega)_T \setminus D_T,\\
			z(0)=z^0&\text{ in }\Omega,&\qquad\qquad\mathbf\Gamma\nabla c\cdot\overrightarrow{\nu}=0&\text{ on }(\partial\Omega)_T,\\
			u=b&\text{ on } D_T,&\qquad\qquad\nabla z\cdot \overrightarrow{\nu}=0&\text{ on }(\partial\Omega)_T
	\end{aligned}
	\label{eqn:unifyingModelIBC}
	\tag{IBC}
	\end{align}
	and additionally for Cahn-Hilliard systems
	\begin{align}
	\begin{aligned}
		&\mathbb M\nabla w\cdot\overrightarrow{\nu}=0&&\text{ on }(\partial\Omega)_T,
	\end{aligned}
	\tag{IBC}
	\end{align}
	where  $\overrightarrow{\nu}$ is
	the unit normal on $\partial\Omega$ pointing outward and
	$b$ the boundary value function on the Dirichlet boundary $D$. The initial values are subject to $0\leq z_0\leq 1$ and
	$c^0\in \Sigma\cap\mathbb R_{>0}^N$ a.e. in $\Omega$.
	In the following, we assume that $b$ can be suitably extended to a function on $\ol{\Omega_T}$.\\

	The paper is organized as follows: In Chapter 2, we introduce an appropriate notion of weak solutions for the system \eqref{eqn:unifyingModelClassical}.
	To handle the differential inclusion rigorously, we adapt the concept of energetic solutions originally introduced in the context of
	rate-independent systems (see for instance \cite{Mielke05}) to phase separation systems coupled with rate-dependent damage.
	This approach was firstly presented in \cite{HeiKraus}.
	The main result and their assumptions are stated at the end of Chapter 2.
	
	In Chapter 3, we prove existence of weak solutions for a regularization of system \eqref{eqn:unifyingModelClassical}
	expressed in classical formulation as
	\begin{align}
		\left.
		\begin{cases}
			\partial_t c = -\mathcal S w&\text{ in }\Omega_T,\\
			w=\mathbb P(-\mathrm{div}(\mathbf\Gamma\nabla c)+W_{,c}^{\mathrm{ch,pol}}(c)+W_{,c}^\mathrm{el}(e(u),c,z)
				+\varepsilon\partial_t c)&\text{ in }\Omega_T,\\
			\mathrm{div}(\sigma)+\varepsilon\mathrm{div}(|\nabla u|^2\nabla u)=0&\text{ in }\Omega_T,\\
			\partial\rho(\partial_t z)-\Delta z-\varepsilon\mathrm{div}(|\nabla z|^{p-2}\nabla z)+W_{,z}^\mathrm{el}(e(u),z)
				+\partial I_{[0,\infty)}(z)\ni 0&\text{ in }\Omega_T,\\
		\end{cases}
		\right\}
	\label{eqn:regUnifyingModelClassical}
	\tag{{$S_\varepsilon$}}
	\end{align}
	where $W^{\mathrm{ch,pol}}$ and $W^{\mathrm{el}}$ satisfy certain polynomial growth conditions and $p>n$.
	The initial-boundary conditions are
	\begin{align}
		\text{\eqref{eqn:unifyingModelIBC} with }(\sigma+\varepsilon|\nabla u|^2\nabla u)\cdot\overrightarrow{\nu}=0\text{ instead of }\sigma\cdot\overrightarrow{\nu}=0.
	\label{eqn:regUnifyingModelIBC}
	\tag{IBC$_\varepsilon$}
	\end{align}
	It turns out that the weak solutions of the regularized system have the following regularities:\\
	$c\in H^1(0,T;L^2(\Omega;\mathbb R^N))$,
	$\nabla u\in L^4(\Omega_T;\mathbb R^{n\times n})$ and $\nabla z\in L^p(\Omega_T;\mathbb R^n)$
	(with $p>n$ as above). They are constructed by adapting the approximation techniques developed in \cite{HeiKraus}.
	
	The limit problem $\varepsilon\searrow 0$ for \eqref{eqn:regUnifyingModelClassical} corresponding to \eqref{eqn:unifyingModelClassical}
	with $W^\mathrm{ch}=W^\mathrm{ch,pol}$ is solved in Chapter 4.
	The displacement field $u$ obtained in this process has
        $H^1(\Omega;\mathbb R^n)$-regularity in the first instance. To establish
        existence results for chemical free energies of logarithmic type, we
        prove a higher integrability result for $\nabla u$ in Chapter 5, which
        is based on some ideas of \cite{EL91, GarckeHabil, Garcke05}. 

	Finally, Chapter 6 is devoted to logarithmic free energies for the
        concentration $c$. 
        Following the approach in \cite{GarckeHabil, Garcke05}, we use a suitable regularization $W^{\mathrm{ch},\delta}$ with polynomial
	growth of the logarithmic free energy
	density $W^\mathrm{ch,log}$ to obtain a solution for \eqref{eqn:unifyingModelClassical}.
        Using this regularization, the chemical components $c_k$ become strictly positive in the limit.

\vspace{2mm}
	The notation, we will use throughout this paper, is collected in the following.\\\\
	\textit{Spaces and sets.}\vspace*{0.4em}\\
	\begin{tabular}{p{6em}l}
			$W^{1,r}(\Omega;\mathbb R^n)$ & standard Sobolev space\\
	\end{tabular}\\
	\begin{tabular}{p{6em}l}
			$W^{1,r}_+(\Omega)$ &functions of $W^{1,r}(\Omega)$ which are non-negative almost everywhere\vspace{0.2em}\\
	\end{tabular}\\
	\begin{tabular}{p{6em}l}
			$W^{1,r}_-(\Omega)$ &functions of $W^{1,r}(\Omega)$ which are non-positive almost everywhere\vspace{0.2em}\\
	\end{tabular}\\
	\begin{tabular}{p{6em}l}
			$W^{1,r}_D(\Omega;\mathbb R^n)$ &functions of $W^{1,r}(\Omega;\mathbb R^n)$ which vanish
				on $D\subseteq\partial\Omega$ in the sense of traces\vspace{0.2em}\\
	\end{tabular}\\
	\begin{tabular}{p{6em}l}
			$B_R(A)$ &open neighbourhood of $A\subseteq\mathbb R^n$ with thickness $R$\vspace{0.2em}\\
	\end{tabular}\\
	\begin{tabular}{p{6em}l}
			$Q_R(x_0)$ & open cube $\{x\in\mathbb R^n\,|\, \|x-x_0\|_{\infty}<R\}$\vspace{0.2em}\\
	\end{tabular}\\
	\begin{tabular}{p{6em}l}
			$\{f=0\}$ &zero set $\{x\in\ol\Omega\,|\,f(x)=0\text{ a.e.}\}$ of a function $f\in L^1(\Omega)$ defined up to a\\
			&set of measure $0$ and defined uniquely if $f\in W^{1,p}(\Omega)$ for $p>n$ as\\
			&$W^{1,p}(\Omega)\hookrightarrow\C C^0(\ol\Omega)$\vspace{0.2em}\\
	\end{tabular}\\
	\begin{tabular}{p{6em}l}
			$\Omega_T$ &the set $(0,T)\times\Omega$\vspace{0.2em}\\
	\end{tabular}\\
	\vspace*{0.5em}\\
	\textit{Functions, operations and measures.}\vspace*{0.4em}\\
	\begin{tabular}{p{6em}l}
			$[f]^+$ &non-negative part of $f$, i.e. $\max\{0,f\}$\vspace{0.2em}\\
	\end{tabular}\\
	\begin{tabular}{p{6em}l}
			$I_M$ & indicator function of a subset $M\subseteq X$\vspace{0.2em}\\
	\end{tabular}\\
	\begin{tabular}{p{6em}l}
			$\chi_M$ & characteristic function of a subset $M\subseteq X$\vspace{0.2em}\\
	\end{tabular}\\
	\begin{tabular}{p{6em}l}
			$W_{,e}$ &classical derivative of a function $W$ with respect to the variable $e$\vspace{0.2em}\\
	\end{tabular}\\
	\begin{tabular}{p{6em}l}
			$\langle g^\star,f\rangle$ &dual pairing of $g^\star\in (W^{1,r}(\Omega;\R^n))^\star$ and $f\in W^{1,r}(\Omega;\R^n)$\vspace{0.2em}\\
	\end{tabular}\\
	\begin{tabular}{p{6em}l}
			$\partial^\mathrm{Cl} E$ & generalized Clarke's subdifferential of $E$\vspace{0.2em}\\
	\end{tabular}\\
	\begin{tabular}{p{6em}l}
			$\mathrm d E$ &G\^{a}teaux differential of $E$\vspace{0.2em}\\
	\end{tabular}\\
	\begin{tabular}{p{6em}l}
			$p^\star$ &Sobolev critical exponent $\frac{np}{n-p}$ for $n>p$\vspace{0.2em}\\
	\end{tabular}\\
	\begin{tabular}{p{6em}l}
			$\mathrm{diam}(Q)$ &diameter of a subset $Q\subseteq\R^n$\vspace{0.2em}\\
	\end{tabular}\\
	\begin{tabular}{p{6em}l}
			$\mathcal H^n$ &Hausdorff measure of dimension $n$\vspace{0.2em}\\
	\end{tabular}\\
	\begin{tabular}{p{6em}l}
			$\mathcal L^n$ &Lebesgue measure of dimension $n$
	\end{tabular}\vspace*{1em}\\

\section{Existence theorem}
\subsection{Weak formulation}
	The weak notion, we will derive in this section for the
	doubly nonlinear differential inclusion occurring in \eqref{eqn:unifyingModelClassical}, is inspired by the concept
	of energetic solutions
	for rate-independent systems (see for instance \cite{Mielke05}).
	In the rate-independent setting, the differential inclusion is formulated by a global stability condition
	and an energy inequality.
	In \cite{HeiKraus}, we have introduced an approach which uses an energy inequality and a variational inequality
	to handle the rate-dependence coming from the viscosity term $\frac\beta2|\dot z|^2$ in the damage dissipation
	density function $\rho$.
	
	The corresponding G\^ateaux-differentiable free energy $\tilde{\mathcal E}:H^1(\Omega;\mathbb R^n)\times H^1(\Omega;\mathbb R^N)\times (H^1(\Omega)
	\cap L^\infty(\Omega))\rightarrow\mathbb R$ and dissipation
	functional $\tilde{\mathcal R}:L^2(\Omega)\rightarrow\mathbb R$ to system \eqref{eqn:unifyingModelClassical} are given by 
	\begin{equation*}
		\begin{split}
			\tilde{\mathcal E}(u,c,z)&:=\int_\Omega\frac{1}{2}\mathbf\Gamma\nabla c:\nabla c
				+\frac{1}{2}|\nabla z|^2+W^\mathrm{ch}(c)+W^\mathrm{el}(e,c,z)\,\mathrm dx,\\
			\tilde{\mathcal R}(\dot z)&:=\int_\Omega-\alpha \dot z+\frac\beta2|\dot z|^2\,\mathrm dx,
		\end{split}
	\end{equation*}
	with viscosity constants $\alpha,\beta>0$.
	To account for the constraints of $z$, we extend the functionals $\tilde{\mathcal E}$ and $\tilde{\mathcal R}$ above for analytical reasons by indicator functions:
	\begin{align*}
	\begin{aligned}
			&\mathcal E(u,c,z):=\tilde{\mathcal E}(u,c,z)+\int_\Omega I_{[0,\infty)}(z)\,\mathrm dx,
			&&\qquad\mathcal R(\dot z):=\tilde{\mathcal R}(\dot z)+\int_\Omega I_{(-\infty,0]}(\dot z)\,\mathrm dx.
	\end{aligned}
	\end{align*}
	If we equip the space $H^1(\Omega)\cap L^\infty(\Omega)$ with the norm $\|\cdot\|_{H^1\cap L^\infty}
	:=\|\cdot\|_{H^1}+\|\cdot\|_{L^\infty}$ the generalized subdifferential
	$\partial_z^\mathrm{Cl}\mathcal E$ at
	a point $(u,c,z)\in H^1(\Omega;\mathbb R^n)\times H^1(\Omega;\mathbb R^N)\times (H^1(\Omega)\cap L^\infty(\Omega))$ is
	\begin{align}
	\label{eqn:subdifferentialE}
		\partial_z^\mathrm{Cl}\mathcal E(u,c,z)=\left\{\mathrm d_z\tilde{\mathcal E}(u,c,z)+r\in (H^1(\Omega)
			\cap L^\infty(\Omega))^\star\,\Big|\,r\in \partial I_{H_+^1(\Omega)\cap L^\infty(\Omega)}(z) \right\}.
	\end{align}
	The inclusion $L^1(\Omega)\subset (H^1(\Omega)\cap L^\infty(\Omega))^\star$ will be later used for the
	construction of a specific subgradient.
	Using property \eqref{eqn:subdifferentialE}, the differential inclusion in \eqref{eqn:unifyingModelClassical} can be rewritten in a
	weaker form as
	\begin{align*}
		0\in\partial_z^\mathrm{Cl}\mathcal E(u(t),c(t),z(t))+\partial_{\dot z}\mathcal R(\dot z(t)).
	\end{align*}
	
	
	The analytical basis for a formulation of a weak solution is the
        following proposition (a proof of a related result can be found in \cite{HeiKraus}):
	\begin{proposition}
	\label{prop:energeticFormulation}
		Let $(u,c,w,z)\in \C C^2(\ol{\Omega_T};\mathbb R^n\times\mathbb R^N\times\mathbb R^N\times\mathbb R)$ be a smooth solution satisfying
		\eqref{eqn:diffusionEquation} and \eqref{eqn:quasiStaticEq}
		with the initial-boundary conditions \eqref{eqn:unifyingModelIBC}. Then the following two conditions are equivalent:
		\begin{itemize}
			\item[(i)]
					$0 \in \partial^\mathrm{Cl}_z\mathcal E(u(t),c(t),z(t))+\partial_{\dot z}\mathcal R(\dot z(t))$ for all $t\in[0,T]$,
			\item[(ii)]
				the energy inequality
				\begin{align*}
						&\mathcal E(u(t),c(t),z(t))
							+\int_{0}^{t}\langle\mathrm d_{\dot z}\tilde{\mathcal R}(\partial_t z),\partial_t z\rangle\,\mathrm ds
							+\int_0^t\langle\mathcal S w(s),w(s)\rangle\mathrm ds
							\notag\\
						&\qquad\qquad\leq
							\mathcal E(u(0),c(0),z(0))+\int_{\Omega_t}W_{,e}^\mathrm{el}(e(u),c,z):e(\partial_t b)\,\mathrm dx\mathrm ds
				\end{align*}
				for all $0\leq t\leq T$
				and the variational inequality
				\begin{align*}
					0\leq \left\langle \mathrm d_z\tilde{\mathcal E}(u(t),c(t),z(t))+r(t)+\mathrm d_{\dot z}
					\tilde{\mathcal R}(\partial_t z(t)),\zeta\right\rangle
				\end{align*}
				for all $\zeta \in H_-^1(\Omega)\cap L^\infty(\Omega)$ and $r(t)\in \partial I_{H_+^1(\Omega)\cap L^\infty(\Omega)}(z(t))$
				and for all $0\leq t\leq T$.
		\end{itemize}
		If one of the two conditions holds then the following energy
                balance equation is satisfied:
		\begin{align*}
				&\mathcal E(u(t),c(t),z(t))
					+\int_{0}^{t}\langle\mathrm d_{\dot z}\tilde{\mathcal R}(\partial_t z),\partial_t z\rangle\,\mathrm ds
					+\int_0^t\langle\mathcal S w(s),w(s)\rangle\mathrm ds
					\notag\\
				&\qquad\qquad=
					\mathcal E(u(0),c(0),z(0))+\int_{\Omega_t}W_{,e}^\mathrm{el}(e(u),c,z):e(\partial_t b)\,\mathrm dx\mathrm ds.
		\end{align*}
	\end{proposition}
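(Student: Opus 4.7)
The plan is to reduce both (i) and (ii) to a pointwise identity obtained from the G\^ateaux chain rule for $\mathcal E$ along the smooth trajectory combined with the algebraic structure of the two convex subdifferentials. First I compute $\tfrac{d}{dt}\tilde{\mathcal E}(u(t),c(t),z(t))$ termwise. The $u$-contribution is $\int_\Omega\sigma\!:\!e(\partial_t u)\dx$; splitting $\partial_t u=(\partial_t u-\partial_t b)+\partial_t b$, the first summand vanishes after integration by parts thanks to $\mathrm{div}(\sigma)=0$, $\sigma\cdot\overrightarrow{\nu}=0$ on $(\partial\Omega)_T\setminus D_T$, and $\partial_t u-\partial_t b=0$ on $D_T$, leaving $\int_\Omega W^{\mathrm{el}}_{,e}\!:\!e(\partial_t b)\dx$. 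The $c$-contribution reduces to $\int_\Omega w\cdot\partial_t c\dx$ after integrating by parts with $\mathbf\Gamma\nabla c\cdot\overrightarrow{\nu}=0$ and using $\mathbb P\partial_t c=\partial_t c$ (since $\sum_k\partial_t c_k=0$); substituting \eqref{eqn:diffusionEquation}, and for Cahn-Hilliard a second integration by parts via $\mathbb M\nabla w\cdot\overrightarrow{\nu}=0$, produces $-\langle\mathcal Sw,w\rangle$ in both cases. Since $z\ge 0$ we have $\mathcal E=\tilde{\mathcal E}$, so
\begin{equation*}
\tfrac{d}{dt}\mathcal E=\int_\Omega W^{\mathrm{el}}_{,e}\!:\!e(\partial_t b)\dx-\langle\mathcal Sw,w\rangle+\langle\mathrm d_z\tilde{\mathcal E},\partial_t z\rangle.
\end{equation*}

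Next I unpack the two subdifferentials. By \eqref{eqn:subdifferentialE}, (i) is equivalent to the existence of $r\in\partial I_{H_+^1\cap L^\infty}(z)$ and a multiplier $\lambda\in\partial I_{H_-^1\cap L^\infty}(\partial_t z)$ such that
\begin{equation*}
\mathrm d_z\tilde{\mathcal E}+r+\mathrm d_{\dot z}\tilde{\mathcal R}(\partial_t z)+\lambda=0.
\end{equation*}
Polarity of these cones yields the standard characterizations $\langle r,z\rangle=0$, $\langle r,\zeta\rangle\le 0$ for $\zeta\ge 0$, $\langle\lambda,\partial_t z\rangle=0$, $\langle\lambda,\zeta\rangle\le 0$ for $\zeta\le 0$. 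Crucially for smooth solutions one has the additional identity $\langle r,\partial_t z\rangle=0$, because $\{z=0\}$ is a time-minimum set of the $\C C^2$-function $z$ so that $\partial_t z=0$ there.

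For (i)$\Rightarrow$(ii), pairing the above equation with an arbitrary $\zeta\in H_-^1\cap L^\infty$ delivers the variational inequality via $\langle\lambda,\zeta\rangle\le 0$, while pairing with $\partial_t z$ itself gives $\langle\mathrm d_z\tilde{\mathcal E},\partial_t z\rangle=-\langle\mathrm d_{\dot z}\tilde{\mathcal R},\partial_t z\rangle$; inserting this into the chain-rule identity and integrating in time yields the full energy balance (and hence the energy inequality). For (ii)$\Rightarrow$(i), the energy inequality combined with the same chain rule reads $\int_0^t\big(\langle\mathrm d_z\tilde{\mathcal E},\partial_t z\rangle+\langle\mathrm d_{\dot z}\tilde{\mathcal R},\partial_t z\rangle\big)\ds\le 0$; the variational inequality tested at $\zeta=\partial_t z$, together with $\langle r,\partial_t z\rangle=0$, provides the reverse inequality pointwise, so equality holds for a.e.~$t$. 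Setting $\lambda:=-(\mathrm d_z\tilde{\mathcal E}+r+\mathrm d_{\dot z}\tilde{\mathcal R})$ and combining the variational inequality $\langle\lambda,\zeta\rangle\le 0$ ($\zeta\le 0$) with the equality $\langle\lambda,\partial_t z\rangle=0$ gives $\lambda\in\partial I_{H_-^1\cap L^\infty}(\partial_t z)$, assembling (i). The main obstacle is the rigorous treatment of the constraint multiplier $r$: both the decomposition \eqref{eqn:subdifferentialE} and the ``no-work'' identity $\langle r,\partial_t z\rangle=0$ rest essentially on the classical regularity and the strict feasibility $z\ge 0$, and are exactly the points that will require a substitute in the weak formulation used in the sequel.
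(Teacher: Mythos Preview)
Your proposal is correct and follows essentially the same approach that the paper indicates: the paper does not spell out a full proof but refers to \cite{HeiKraus} and, in the remarks following the proposition, points precisely to the chain-rule identity $\mathcal E(u(t),c(t),z(t))-\mathcal E(u(0),c(0),z(0))=\int_0^t\tfrac{\mathrm d}{\mathrm dt}\tilde{\mathcal E}\,\mathrm ds$ combined with \eqref{eqn:diffusionEquation}, \eqref{eqn:quasiStaticEq} and the variational inequality to recover the ``$\geq$'' part of the energy balance. Your termwise computation of $\tfrac{d}{dt}\tilde{\mathcal E}$ and the complementarity identities $\langle r,\partial_t z\rangle=0$, $\langle\lambda,\partial_t z\rangle=0$ are exactly the ingredients this sketch presupposes, so there is no substantive difference in strategy.
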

	\textbf{Remarks for Proposition \ref{prop:energeticFormulation}.}
	In contrast to \cite{HeiKraus}, the energy inequality in (ii) compares the energy at the beginning $s=0$ with the energy at an arbitrary
	time $s=t$ instead of $s=t_1$ with $s=t_2$ for $0\leq t_1<t_2\leq T$.
	
	Applying the chain rule on the right hand side of
	$$\C E(u(t),c(t),z(t))-\C E(u(0),c(0),z(0))=\int_0^t\frac{\mathrm d}{\mathrm dt}\tilde{\C E}(u(s),c(s),z(s))\ds$$
	and using
	\eqref{eqn:diffusionEquation}, \eqref{eqn:quasiStaticEq} as well as the variational inequality in (ii), the
	``$\geq$''-part of the energy balance can be shown.


	We will see that in our approach the mathematical analysis of \eqref{eqn:unifyingModelClassical}
	requires several $\varepsilon$-regularization terms (see \eqref{eqn:regUnifyingModelClassical}) to establish the energy and
	variational inequality for the differential
	inclusion and to handle the logarithmic free energy.
	A transition to $\varepsilon\searrow 0$ will finally give us a solution of the limit problem
	\eqref{eqn:unifyingModelClassical}.
	
	Proposition \ref{prop:energeticFormulation} can also be formulated for the regularized system
	\eqref{eqn:regUnifyingModelClassical}
	with the regularized energy
	\begin{align*}
		&\tilde{\mathcal E}_\varepsilon(u,c,z):=
			\int_\Omega\frac{1}{2}\mathbf\Gamma\nabla c:\nabla c
			+\frac{1}{2}|\nabla z|^2+W^{\mathrm{ch,pol}}(c)+W^\mathrm{el}(e,c,z)
			+\frac \varepsilon4|\nabla u|^4+\frac \varepsilon p|\nabla z|^p\,\mathrm dx,\\
		&\mathcal E_\varepsilon(u,c,z):=\tilde{\mathcal E}_\varepsilon(u,c,z)+\int_\Omega I_{[0,\infty)}(z)\,\mathrm dx,
	\end{align*}
	and the initial-boundary conditions \eqref{eqn:regUnifyingModelIBC}.
	Notice that $\mathbb P\partial_t c=\partial_t c$ because of $\partial_t c(t,x)\in T\Sigma$.
	
	We can now give a weak notion of \eqref{eqn:regUnifyingModelClassical} and \eqref{eqn:unifyingModelClassical}.
	(The energy densities $W^{\mathrm{ch,pol}}$ and $W^{\mathrm{el}}$ will satisfy some polynomial growth
	conditions which are specified in the next subsection.)


	\begin{definition}[Weak solution for the regularized system \eqref{eqn:regUnifyingModelClassical}]
	\label{def:weakSolutionRegularized}
		We call a quadruple $q=(u,c,w,z)$ a weak solution of the regularized system
		\eqref{eqn:regUnifyingModelClassical} with the initial-boundary conditions
		\eqref{eqn:regUnifyingModelIBC} if the following properties are satisfied:
		\begin{enumerate}
			\renewcommand{\labelenumi}{(\roman{enumi})}
			\item
				the components of $q$ are in the following spaces:
				\begin{align*}
					&u\in L^\infty(0,T;W^{1,4}(\Omega;\mathbb R^n)),\;u|_{D_T}=b|_{D_T},\\
					&c\in L^\infty(0,T;H^1(\Omega;\mathbb R^N))\cap H^1(0,T;L^2(\Omega;\mathbb R^N)),\; c(0)=c^0,\; c\in\Sigma\text{ a.e. in }\Omega_T,\\
					&z\in L^\infty(0,T;W_+^{1,p}(\Omega))\cap H^1(0,T;L^2(\Omega)),\;z(0)=z^0,\;\partial_t z\leq 0,
				\end{align*}
				and
				\begin{align*}
				\begin{aligned}
					&w\in L^2(0,T;H^1(\Omega;\mathbb R^N))&&\qquad\qquad\qquad\qquad\quad\;\;\text{for C-H systems,}\\
					&w\in L^2(\Omega_T;\mathbb R^N)&&\qquad\qquad\qquad\qquad\quad\;\;\text{for A-C systems}
				\end{aligned}
				\end{align*}
			\item 
				for all $\zeta\in H^1(\Omega;\mathbb R^N)$ and for a.e. $t\in[0,T]$:
				\begin{align}
				\label{eqn:weaksolution1}
					&\int_{\Omega}\partial_t c(t)\cdot\zeta\,\mathrm dx
						=
							\begin{cases}
								\int_{\Omega} \mathbb M\nabla w(t):\nabla\zeta\dx\qquad&\text{for C-H systems,}  \\
								\int_{\Omega} \mathbb Mw(t)\cdot\zeta\dx\qquad&\text{for A-H systems}
							\end{cases}
				\end{align}
			\item
				for all $\zeta\in H^1(\Omega;\mathbb R^N)$ and for a.e. $t\in[0,T]$:
				\begin{align}
						\int_{\Omega} w(t)\cdot\zeta\,\mathrm dx
							=&\int_{\Omega}\mathbb P\mathbf\Gamma\nabla c(t):\nabla\zeta+\mathbb P W_{,c}^\mathrm{ch,pol}(c(t))\cdot\zeta\,\mathrm dx\notag\\
						&+\int_\Omega\mathbb P W_{,c}^\mathrm{el}(e(u(t)),c(t),z(t))\cdot\zeta+\varepsilon\partial_t c(t)\cdot\zeta\,\mathrm dx
					\label{eqn:weaksolution2}
				\end{align}
			\item
				for all $\zeta\in W_D^{1,4}(\Omega;\mathbb R^n)$ and for a.e. $t\in[0,T]$:
				\begin{equation}
					\int_{\Omega} W_{,e}^\mathrm{el}(e(u(t)),c(t),z(t)):e(\zeta)
					+\varepsilon|\nabla u(t)|^2 \nabla u(t):\nabla \zeta\,\mathrm dx=0
				\label{eqn:weaksolution3}
				\end{equation}
			\item
				for all $\zeta\in W_-^{1,p}(\Omega)$ and for a.e. $t\in[0,T]$:
				\begin{align}
						&\int_{\Omega}(\varepsilon|\nabla z(t)|^{p-2}+1)\nabla z(t)\cdot\nabla\zeta
							+(W_{,z}^\mathrm{el}(e(u(t)),c(t),z(t))
							-\alpha+\beta(\partial_t z(t)))\zeta\,\mathrm dx\notag\\
						&\qquad\qquad\geq-\langle r(t),\zeta\rangle,
				\label{eqn:weaksolution4}
				\end{align}
				where $r(t)\in (W^{1,p}(\Omega))^\star$ satisfies
				$\langle r(t),z(t)-\zeta\rangle\geq 0$
				for all $\zeta\in W_+^{1,p}(\Omega)$
			\item
				energy inequality for a.e. $t\in[0,T]$:
				\begin{align}
						&\mathcal E_\varepsilon(u(t),c(t),z(t))-\mathcal E_\varepsilon(u^0,c^0,z^0)
							+\int_\Omega \alpha (z^0-z(t))\,\mathrm dx\notag\\
						&+\int_{\Omega_t} \beta |\partial_t z|^2
							+\varepsilon|\partial_t c|^2\,\mathrm dx\mathrm ds
							+\int_0^t\langle\mathcal S w(s),w(s)\rangle\,\mathrm ds\notag\\
						&\qquad\qquad\leq
							\int_{\Omega_t}W_{,e}^\mathrm{el}(e(u),c,z):e(\partial_t b)\,\mathrm dx\mathrm ds
							+\varepsilon\int_{\Omega_t}|\nabla u|^2\nabla u:\nabla\partial_t b\,\mathrm dx\mathrm ds,
				\label{eqn:weaksolution5}
				\end{align}
				where $u^0$ is the unique minimizer of $\mathcal E_\varepsilon(\cdot,c^0,z^0)$ in $W^{1,4}(\Omega;\mathbb R^n)$ with trace $u^0|_D=b(0)|_D$.
		\end{enumerate}
	\end{definition}
	With the help of the operator $\C S$, the diffusion equation \eqref{eqn:weaksolution1} can also be written as
	\begin{align*}
		&\int_{\Omega}\partial_t c(t)\cdot\zeta\,\mathrm dx
			=-\langle\mathcal Sw(t),\zeta\rangle,
	\end{align*}
	which will be used in the following.

	\begin{definition}[Weak solution for the limit system \eqref{eqn:unifyingModelClassical}]
	\label{def:weakSolutionLimit}
		A quadruple $q=(u,c,w,z)$ is called a
		weak solution of the system
		\eqref{eqn:unifyingModelClassical} with the initial-boundary conditions
		\eqref{eqn:unifyingModelIBC} if the following properties are satisfied:
		\begin{enumerate}
			\renewcommand{\labelenumi}{(\roman{enumi})}
			\item
				the components of $q$ are in the following spaces:
				\begin{align*}
					&u\in L^\infty(0,T;H^{1}(\Omega;\mathbb R^n)),\;u|_{D_T}=b|_{D_T},\\
					&c\in L^\infty(0,T;H^1(\Omega;\mathbb R^N)),\; c\in\Sigma\text{ a.e. in }\Omega_T,\\
					&z\in L^\infty(0,T;H_+^1(\Omega))\cap H^1(0,T;L^2(\Omega)),\;z(0)=z^0,\;\partial_t z\leq 0
				\end{align*}
				and
				\begin{align*}
				\begin{aligned}
					&w\in L^2(0,T;H^1(\Omega;\mathbb R^N))&&\qquad\qquad\qquad\;\;\text{for C-H systems,}\\
					&w\in L^2(\Omega_T;\mathbb R^N)&&\qquad\qquad\qquad\;\;\text{for A-C systems}
				\end{aligned}
				\end{align*}
			\item 
				for all $\zeta\in L^2(0,T;H^1(\Omega;\mathbb R^N))$ with $\partial_t\zeta\in L^2(\Omega_T;\mathbb R^N)$ and $\zeta(T)=0$:
				\begin{align*}
					&\int_{\Omega_T}(c-c^0)\cdot\partial_t\zeta\,\mathrm dx\,\mathrm dt
						=\int_0^T\langle\mathcal Sw,\zeta\rangle\,\mathrm dt
				\end{align*}
			\item
				for all $\zeta\in H^1(\Omega;\mathbb R^N)\cap L^\infty(\Omega;\mathbb R^N)$ and for a.e. $t\in[0,T]$:	
				\begin{equation*}
					\begin{split}
						\int_{\Omega} w(t)\cdot\zeta\,\mathrm dx
							=&\int_{\Omega}\mathbb P\mathbf\Gamma\nabla c(t):\nabla\zeta
							+\mathbb P W_{,c}^\mathrm{ch}(c(t))\cdot\zeta\,\mathrm dx\\
						&+\int_{\Omega}\mathbb P W_{,c}^\mathrm{el}(e(u(t)),c(t),z(t))\cdot\zeta\,\mathrm dx
					\end{split}
				\end{equation*}
			\item
				for all $\zeta\in H_D^1(\Omega;\mathbb R^n)$ and for a.e. $t\in[0,T]$:
				\begin{equation*}
					\int_{\Omega} W_{,e}^\mathrm{el}(e(u(t)),c(t),z(t)):e(\zeta)\,\mathrm dx=0
				\end{equation*}
			\item
				for all $\zeta\in H_-^1(\Omega)\cap L^\infty(\Omega)$ and for a.e. $t\in[0,T]$:
				\begin{align*}
						&\int_{\Omega}\nabla z(t)\cdot\nabla\zeta
							+(W_{,z}^\mathrm{el}(e(u(t)),c(t),z(t))
							-\alpha+\beta(\partial_t z(t)))\zeta\,\mathrm dx
							\geq-\langle r(t),\zeta\rangle,
				\end{align*}
				where $r(t)\in (H^1(\Omega)\cap L^\infty(\Omega))^\star$ satisfies
				$\langle r(t),z(t)-\zeta\rangle\geq 0$
				for all $\zeta\in H_+^1(\Omega)\cap L^\infty(\Omega)$
			\item
				energy inequality for a.e. $t\in[0,T]$:
				\begin{equation*}
					\begin{split}
						&\mathcal E(u(t),c(t),z(t))
							+\int_\Omega \alpha (z^0-z(t))\,\mathrm dx+\int_{\Omega_t} \beta |\partial_t z|^2\,\mathrm dx\mathrm ds
							+\int_0^t\langle\mathcal Sw(s),w(s)\rangle\,\mathrm ds\\
						&\qquad\qquad\leq\mathcal E(u^0,c^0,z^0)+
							\int_{\Omega_t}W_{,e}^\mathrm{el}(e(u),c,z):e(\partial_t b)\,\mathrm dx\mathrm ds,
					\end{split}
				\end{equation*}
				where $u^0$ is the unique minimizer of $\mathcal E(\cdot,c^0,z^0)$ in $H^1(\Omega;\mathbb R^n)$ with trace $u^0|_D=b(0)|_D$.
		\end{enumerate}
	\end{definition}
				
	Note that both notions of weak solution imply chemical mass conservation, i.e.
	\begin{align*}
		\int_\Omega c(t)\,\mathrm dx\equiv const.
	\end{align*}

\subsection{Assumptions and main results}
	The general setting, the growth assumptions and the assumptions on the coefficient tensors which are mandatory for the existence theorems are summarized below.
	\begin{enumerate}
		\renewcommand{\labelenumi}{(\roman{enumi})}
		\item \textit{Setting}
			\begin{align*}
			\begin{aligned}
				&\text{Space dimension}\hspace{4em}&&n \in \mathbb{N},\hspace{22.5em}\\
				&\text{Components in the alloy} &&N\in\mathbb N\text{ with }N\geq 2,\\
				&\text{Regularization exponent} &&p>n,\\
				&\text{Viscosity factors} &&\alpha,\beta>0,\\
				&\text{Domain} &&\Omega\subseteq\mathbb R^n\text{ bounded Lipschitz domain,}\\
				&\text{Dirichlet boundary} &&D\subseteq\partial\Omega\text{ with }\mathcal H^{n-1}(D)>0,\\
				&\text{Time interval} &&[0,T]\text{ with }T>0\\
			\end{aligned}
			\end{align*}
		\item \textit{Energy densities}
			\begin{align}
				\text{Elastic energy density}\qquad\;\;&W^\mathrm{el}\in \C C^1(\mathbb R^{n\times n}\times\mathbb R^N
					\times\mathbb R;\mathbb R_+)\text{ with}\notag\\
				\tag{A1}
				\label{eqn:growthEst1}
				&W^\mathrm{el}(e,c,z)=W^\mathrm{el}(e^t,c,z),\\
				\tag{A2}
				\label{eqn:growthEst2}
				&W^\mathrm{el}(e,c,z)\leq C (|e|^2+|c|^2+1),\\
				\tag{A3}
				\label{eqn:growthEst3}
				&\eta|e_1-e_2|^2\leq (W_{,e}^\mathrm{el}(e_1,c,z)-W_{,e}^\mathrm{el}(e_2,c,z)):(e_1-e_2),\\
				\tag{A4}
				\label{eqn:growthEst4}
				&|W_{,e}^\mathrm{el}(e_1+e_2,c,z)|\leq C(W^\mathrm{el}(e_1,c,z)+|e_2|+1),\\
				\tag{A5}
				\label{eqn:growthEst5}
				&|W_{,c}^\mathrm{el}(e,c,z)|\leq C (|e|^2+|c|^2+1),\\
				\tag{A6}
				\label{eqn:growthEst6}
				&|W_{,z}^\mathrm{el}(e,c,z)|\leq C (|e|^2+|c|^2+1),\\
				\text{Chemical energy densities}\quad&W^\mathrm{ch,pol},W^\mathrm{ch,log}\in \C C^1(\mathbb R^N;\mathbb R)\text{ with }W^\mathrm{ch,pol}\geq -C,\notag\\
				\tag{A7}
				\label{eqn:growthEst7}
				&|W_{,c}^\mathrm{ch,pol}(c)|\leq C(|c|^{2^\star/2}+1),\\
				\tag{A8}
				\label{eqn:growthEst8}
				&W^\mathrm{ch,log}(c)=\theta\sum_{k=1}^N c_k\log c_k+\frac 12 c\cdot Ac,\text{ }\theta>0\text{, }A\in\R_\mathrm{sym}^{n\times n}
			\end{align}
		\item \textit{Tensors}
			\begin{align*}
			\begin{aligned}
				&\text{Mobility tensor\hspace{3.7em}} &&\mathbb M\in\mathbb R^{N\times N}\text{ symmetric and positive definite on }T\Sigma\text{ and }\hspace{1.0em}\\
					&&&\sum_{l=1}^N \mathbb M_{kl}=0\text{ for all }k=1,\ldots,N,\\
				&\text{Energy gradient tensor} &&\mathbf\Gamma\in\mathcal L(\mathbb R^{N\times n};\mathbb R^{N\times n})\text{ symmetric and positive definite}\\
					&&&\text{fourth order tensor}
			\end{aligned}
			\end{align*}
	\end{enumerate}

	\begin{remark}
			Due to the effect of damage on the elastic response of
			the material, $W^\mathrm{el}$ is often modelled by 
			the following ansatz:
			\begin{equation*}
				\label{eq:Wel}
				W^\mathrm{el} = ( \Phi(z) + \tilde\eta )\, \hat{W}^\mathrm{el},
			\end{equation*} 
			where $\Phi:[0,1] \to \mathbb{R}_+$ is a continuously differentiable and monotonically
			increasing function with $\Phi(0)=0$ and $\tilde\eta >0$ is a small 
			value.
			
			A typically form of the {\it elastically stored energy density} 
			$\hat{W}^\mathrm{el}$ is as follows:
			\begin{equation}
			\label{eqn:inhomEnergy}
				\hat{W}^\mathrm{el}(c,e) = 
				\frac{1}{2} \big(e - e^*(c)\big) : \mathbb C(c) \big(e -e^*(c)\big). 
			\end{equation}
			Here, $e^*(c)$ denotes the {\it eigenstrain}, which is usually linear 
			in $c$, and  
			$\mathbb C(c)\in\mathcal L(\mathbb R_\mathrm{sym}^{n\times n})$ is a fourth order stiffness tensor, which is symmetric and positive definite. 
			The elastic energy density is called  {\it homogeneous} if the stiffness tensor does not depend on the concentration, i.\,e. 
			$\mathbb C(c)=\mathbb C$. 

			Note that the inhomogeneous elastic energy \eqref{eqn:inhomEnergy} fits into our setting
			with the previous growth assumptions \eqref{eqn:growthEst1}-\eqref{eqn:growthEst6}.
			In particular, we are not confined to homogeneous
			elasticity as in \cite{HeiKraus}.  There, the more restrictive growth condition
			$|W_{,c}^\mathrm{el}(e,c,z)|\leq C (|e|+|c|^2+1)$
			is used	instead of \eqref{eqn:growthEst5}.
	\end{remark}
	The main results of this work are summarized in the following theorems:
	\begin{theorem}[Existence theorem - polynomial case]
	\label{theorem:mainTheoremPoly}
		Let the above assumptions be satisfied. Then for every
		\begin{align*}
			&b\in W^{1,1}(0,T;W^{1,\infty}(\Omega;\mathbb R^n)),\\
			&c^0\in H^1(\Omega;\mathbb R^N)\text{ with }c^0\in\Sigma\text{ a.e. in }\Omega,\\
			&z^0\in H^1(\Omega)\text{ with }0\leq z^0\leq 1\text{ a.e. in }\Omega,
		\end{align*}
		there exists a weak solution $q$ of the system \eqref{eqn:unifyingModelClassical}
		with $W^\mathrm{ch}=W^\mathrm{ch,pol}$
		and the initial-boundary conditions \eqref{eqn:unifyingModelIBC}
		in the sense of Definition \ref{def:weakSolutionLimit}.
	\end{theorem}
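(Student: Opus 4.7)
The plan is a two-level approximation. At the inner level I establish existence of weak solutions $q_\varepsilon=(u_\varepsilon,c_\varepsilon,w_\varepsilon,z_\varepsilon)$ to the regularized system $(S_\varepsilon)$ in the sense of Definition \ref{def:weakSolutionRegularized}; at the outer level I send $\varepsilon\searrow 0$ and identify the limit as a weak solution of $(S_0)$ in the sense of Definition \ref{def:weakSolutionLimit}. The regularization terms $\tfrac{\varepsilon}{4}|\nabla u|^4$, $\tfrac{\varepsilon}{p}|\nabla z|^p$ with $p>n$, and $\varepsilon\partial_t c$ provide the additional coercivity and compactness needed to solve the inner problem, while their $\varepsilon$-weighted bounds built into the energy inequality \eqref{eqn:weaksolution5} ensure they drop out in the limit.

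For the inner problem I would adapt the time-discretization scheme of \cite{HeiKraus}. Given a uniform partition of $[0,T]$ with step $\tau$, I construct $(u^m,c^m,w^m,z^m)$ inductively: first solve a constrained incremental minimization problem for $(u^m,z^m)$ under the unidirectionality constraint $0\leq z^m\leq z^{m-1}$ and the Dirichlet boundary condition, then determine $c^m$ and $w^m$ through a backward Euler step for the Cahn--Hilliard or Allen--Cahn equation with the viscous term $\varepsilon(c^m-c^{m-1})/\tau$. The choice $p>n$ gives the Morrey embedding $W^{1,p}(\Omega)\hookrightarrow C^0(\overline\Omega)$, which is crucial for handling the damage subdifferential $r^m\in(W^{1,p})^\star$ and the pointwise constraint $z^m\geq 0$. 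Uniform-in-$\tau$ bounds from a discrete energy inequality together with Aubin--Lions compactness, and the strict monotonicity \eqref{eqn:growthEst3} of $W_{,e}^\mathrm{el}$ (giving strong convergence of $e(u_\tau)$ via a Minty-type argument), allow the passage $\tau\to 0$ and produce $q_\varepsilon$ satisfying (i)--(vi) of Definition \ref{def:weakSolutionRegularized}.

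The outer limit $\varepsilon\searrow 0$ rests on the following $\varepsilon$-uniform bounds extracted from \eqref{eqn:weaksolution5} and the PDE \eqref{eqn:weaksolution2}--\eqref{eqn:weaksolution3}: $c_\varepsilon$ in $L^\infty(0,T;H^1)$ with $\sqrt\varepsilon\,\partial_t c_\varepsilon$ in $L^2(\Omega_T)$; $z_\varepsilon$ in $L^\infty(0,T;H^1)\cap H^1(0,T;L^2)$ with $\varepsilon^{1/p}\nabla z_\varepsilon$ in $L^\infty(0,T;L^p)$; $u_\varepsilon$ in $L^\infty(0,T;H^1)$ by Korn and \eqref{eqn:growthEst3} together with $\varepsilon^{1/4}\nabla u_\varepsilon$ in $L^\infty(0,T;L^4)$; and $w_\varepsilon$ in the appropriate space depending on the Cahn--Hilliard or Allen--Cahn case. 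Aubin--Lions furnishes strong $L^2(\Omega_T)$ convergence of $c_\varepsilon\to c$ and $z_\varepsilon\to z$, hence a.e.\ on a subsequence. The monotonicity \eqref{eqn:growthEst3}, applied by testing \eqref{eqn:weaksolution3} with $u_\varepsilon-u$ (the regularization term $\varepsilon|\nabla u_\varepsilon|^2\nabla u_\varepsilon:\nabla(u_\varepsilon-u)$ vanishing by Young's inequality), then upgrades weak convergence of $e(u_\varepsilon)$ to strong convergence in $L^2(\Omega_T)$.

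The main obstacle is the passage to the limit in the nonlinearities $W_{,c}^\mathrm{el}$ and $W_{,z}^\mathrm{el}$, which by \eqref{eqn:growthEst5}--\eqref{eqn:growthEst6} grow quadratically in $e$, and in the damage subgradient. Strong $L^2$ convergence of $e(u_\varepsilon)$ combined with a.e.\ convergence of $c_\varepsilon,z_\varepsilon$ and continuity of $W_{,c}^\mathrm{el},W_{,z}^\mathrm{el}$ yields, via Vitali, strong $L^1$ convergence sufficient for testing against $\zeta\in H^1\cap L^\infty$ in the Euler--Lagrange equations of Definition \ref{def:weakSolutionLimit}. For the damage inequality, the $\varepsilon$-regularization $\varepsilon|\nabla z_\varepsilon|^{p-2}\nabla z_\varepsilon$ is controlled in $L^{p/(p-1)}(\Omega_T)$ with a factor $\varepsilon^{1/p}$, so it vanishes when tested against fixed $\zeta\in H^1_-\cap L^\infty$; the subgradients $r_\varepsilon$ extracted from $\partial I_{W^{1,p}_+}$ admit a weak-$\star$ limit $r\in(H^1\cap L^\infty)^\star$ by the $L^\infty$ bound $0\leq z_\varepsilon\leq 1$, and the complementarity $\langle r(t),z(t)-\zeta\rangle\geq 0$ follows from the corresponding discrete inequality together with strong convergence of $z_\varepsilon$. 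Finally, weak lower semicontinuity of the convex terms in $\mathcal E$ and Fatou applied to the viscous damage term $\beta|\partial_t z|^2$ preserve the energy inequality in the limit, completing the proof.
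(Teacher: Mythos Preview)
Your two-level strategy (solve $(S_\varepsilon)$ by time discretization, then send $\varepsilon\searrow 0$) is exactly the paper's, and most of your limit arguments---strong convergence of $e(u_\varepsilon)$ via \eqref{eqn:growthEst3}, Vitali for $W_{,c}^\mathrm{el}$ and $W_{,z}^\mathrm{el}$, lower semicontinuity for the energy inequality---match Section~4. There is, however, a genuine gap in your handling of the damage subgradient $r_\varepsilon$ as $\varepsilon\searrow 0$.

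You assert that ``the subgradients $r_\varepsilon$ \ldots\ admit a weak-$\star$ limit $r\in(H^1\cap L^\infty)^\star$ by the $L^\infty$ bound $0\leq z_\varepsilon\leq 1$''. But a pointwise bound on $z_\varepsilon$ gives no control whatsoever on the size of an element of the normal cone $\partial I_{W^{1,p}_+}(z_\varepsilon)$. The variational inequality \eqref{eqn:weaksolution4} only yields one-sided estimates on $\langle r_\varepsilon,\zeta\rangle$, and those estimates involve $\|\nabla\zeta\|_{L^p}$; at best this places $r_\varepsilon$ in $(W^{1,p})^\star$, which is \emph{larger} than $(H^1\cap L^\infty)^\star$ (since $W^{1,p}\hookrightarrow H^1\cap L^\infty$ for $p>n$), so no compactness in the target dual follows. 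The paper's key device, which you are missing, is the explicit $L^1$ representation
\[
r_\varepsilon(t)=-\chi_{\{z_\varepsilon(t)=0\}}\,[W_{,z}^\mathrm{el}(e(u_\varepsilon(t)),c_\varepsilon(t),z_\varepsilon(t))]^+,
\]
obtained in Lemma~\ref{lemma:VI} via the approximation Lemma~\ref{lemma:approximation} and the ``precise lower bound'' Lemma~\ref{lemma:preciseLowerBound}; both rely essentially on $p>n$ through the embedding $W^{1,p}\hookrightarrow C^0(\overline\Omega)$, and this is the real reason the $p$-Laplacian regularization of $z$ is introduced. With this formula in hand one takes $\chi_{\{z_\varepsilon=0\}}\stackrel{\star}{\rightharpoonup}\chi$ in $L^\infty(\Omega_T)$, combines it with the strong $L^1$ convergence of $[W_{,z}^\mathrm{el}(e(u_\varepsilon),c_\varepsilon,z_\varepsilon)]^+$, and sets $r:=-\chi[W_{,z}^\mathrm{el}(e(u),c,z)]^+\in L^1\subset(H^1\cap L^\infty)^\star$; the complementarity $\langle r(t),z(t)-\zeta\rangle\geq 0$ then passes to the limit directly. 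Without this explicit structure your abstract compactness claim for $r_\varepsilon$ does not stand.

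A minor further point: your time-discrete scheme splits the update of $(u^m,z^m)$ from that of $(c^m,w^m)$, whereas the paper minimizes the single coupled functional $\mathbb E_M^m(u,c,z)$ of \eqref{eqn:discreteE} jointly over $(u,c,z)\in\mathcal Q_M^m$. The joint scheme immediately produces the coupled Euler--Lagrange relations \eqref{eqn:discreteSolution1}--\eqref{eqn:discreteSolution4} and the discrete energy estimate \eqref{eqn:energyEstimate}; a splitting would require extra work to recover these, since $W^\mathrm{el}(e,c,z)$ couples all three variables.
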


	\begin{theorem}[Existence theorem - logarithmic case]
	\label{theorem:mainTheoremLog}
		Let the above assumptions be satisfied and, additionally, let $D=\partial\Omega$ and
		$\mathbf\Gamma=\gamma  \, \mathrm{Id}$ with a constant $\gamma>0$. Then for every
		\begin{align*}
			&b\in W^{1,1}(0,T;W^{1,\infty}(\Omega;\mathbb R^n)),\\
			&c^0\in H^1(\Omega;\mathbb R^N)\text{ with }c^0\in\Sigma\text{ and }c_k^0>0\text{ a.e. in }\Omega\text{  for $k=1, \dots, N$},\\
			&z^0\in H^1(\Omega)\text{ with }0\leq z^0\leq 1\text{ a.e. in }\Omega,
		\end{align*}
		there exists a weak solution $q$ of the
		system \eqref{eqn:unifyingModelClassical} with $W^\mathrm{ch}=W^\mathrm{ch,log}$
		and the initial-boundary conditions \eqref{eqn:unifyingModelIBC}
		in the sense of Definition \ref{def:weakSolutionLimit}.
		Additionally, $c_k>0$ a.e. in $\Omega_T$ for $k=1, \dots, N$.
	\end{theorem}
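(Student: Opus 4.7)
The plan is to construct the logarithmic solution as a limit $\delta\searrow 0$ of solutions provided by the polynomial existence result, Theorem \ref{theorem:mainTheoremPoly}. I would first introduce a family $W^{\mathrm{ch},\delta}\in \C C^1(\R^N;\R)$ of polynomial-growth approximations of $W^{\mathrm{ch,log}}$ by replacing each map $s\mapsto s\log s$ on $(-\infty,\delta]$ by its $C^1$ quadratic extension and leaving the quadratic part $\tfrac 12 c\cdot Ac$ untouched. Each $W^{\mathrm{ch},\delta}$ then satisfies \eqref{eqn:growthEst7} (with a $\delta$-dependent constant), so Theorem \ref{theorem:mainTheoremPoly} delivers weak solutions $q_\delta=(u_\delta,c_\delta,w_\delta,z_\delta)$ of \eqref{eqn:unifyingModelClassical} with $W^{\mathrm{ch}}=W^{\mathrm{ch},\delta}$, in the sense of Definition \ref{def:weakSolutionLimit}.

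Next I would extract $\delta$-uniform a priori bounds. Since $c_k^0>0$ a.e., the initial energies $\C E(u_\delta^0,c^0,z^0)$ are $\delta$-uniformly bounded. The energy inequality in Definition \ref{def:weakSolutionLimit}(vi) then yields $\delta$-uniform bounds on $c_\delta$ in $L^\infty(0,T;H^1(\Omega;\R^N))$, on $z_\delta$ in $L^\infty(0,T;H^1(\Omega))\cap H^1(0,T;L^2(\Omega))$, on $\int_0^T\langle\C S w_\delta,w_\delta\rangle\ds$, and, via the coercivity \eqref{eqn:growthEst3}--\eqref{eqn:growthEst4} together with the extension of $b$, on $u_\delta$ in $L^\infty(0,T;H^1(\Omega;\R^n))$. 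The higher integrability result from Chapter 5 upgrades this to $\nabla u_\delta\in L^r(\Omega_T;\R^{n\times n})$ for some $r>2$, uniformly in $\delta$; this is exactly where the hypotheses $D=\partial\Omega$ and $\mathbf\Gamma=\gamma\,\mathrm{Id}$ are needed. As a consequence, $W^{\mathrm{el}}_{,c}(e(u_\delta),c_\delta,z_\delta)$ becomes uniformly bounded in some $L^q(\Omega_T)$ with $q>1$.

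The decisive new step is a $\delta$-uniform control of the logarithmic singularity, which simultaneously drives the nonlinear passage to the limit and the strict positivity conclusion. Following \cite{GarckeHabil, Garcke05}, I would test the chemical potential identity (iii) of Definition \ref{def:weakSolutionLimit} with a suitably truncated version of $\zeta=(\log c_{\delta,k})_{k=1,\ldots,N}$. With $\mathbf\Gamma=\gamma\,\mathrm{Id}$, the gradient contribution $\gamma\,\mathbb P\nabla c_\delta:\nabla\zeta$ is, up to lower order corrections, a nonnegative combination of terms $|\nabla\sqrt{c_{\delta,k}}|^2$, while $W^{\mathrm{el}}_{,c}$ is absorbed using the higher integrability of $\nabla u_\delta$. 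Integration in time yields a $\delta$-uniform estimate of the form $\|\log c_{\delta,k}\|_{L^2(\Omega_T)}\leq C$, which rules out $\{c_k=0\}$ having positive measure in the limit and provides the equi-integrability of $W^{\mathrm{ch},\delta}_{,c}(c_\delta)$ needed to identify the logarithmic limit.

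Finally, weak and weak-$\star$ compactness, combined with an Aubin--Lions argument based on the weak time derivative of $c_\delta$ controlled through Definition \ref{def:weakSolutionLimit}(ii), produces a limit $q=(u,c,w,z)$ along a subsequence, with $c_\delta\to c$ strongly in $L^2(\Omega_T;\R^N)$ and hence a.e. Passage to the limit in the weak equations (ii)--(v) is then carried out by Vitali (for the $W^{\mathrm{ch},\delta}_{,c}$-term, using the log estimate), by monotonicity for the elastic and subdifferential contributions (in particular for $r_\delta$), and by lower semicontinuity for the gradient and viscous terms; the energy inequality is preserved by weak lower semicontinuity together with Fatou applied to $W^{\mathrm{ch,log}}(c)\leq\liminf W^{\mathrm{ch},\delta}(c_\delta)$. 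The main obstacle in this plan is the log-estimate of the third step: it is the only place where the structural restrictions $\mathbf\Gamma=\gamma\,\mathrm{Id}$ and $D=\partial\Omega$ are genuinely used, and its success relies crucially on combining Chapter 5's higher integrability with the specific scalar gradient structure of $\mathbf\Gamma$.
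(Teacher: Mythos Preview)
Your proposal is correct and follows essentially the same route as the paper: regularize the logarithm by a $C^1$ (in the paper, $C^2$) quadratic extension below a threshold $\delta$, apply the polynomial result to get $q_\delta$, derive $\delta$-uniform energy bounds, invoke the higher integrability theorem of Chapter~5, obtain a uniform $L^q$-bound on $(\phi^\delta)'(c_{\delta,k})$ by testing the chemical potential equation with the monotone function $(\phi^\delta)'(c_\delta)$ (your ``truncated $\log$''), and pass to the limit via Vitali. Your identification of the gradient term as a nonnegative sum via the convexity of $\phi^\delta$ and of the role of $D=\partial\Omega$ and $\mathbf\Gamma=\gamma\,\mathrm{Id}$ is accurate.

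One genuine omission: in the Cahn--Hilliard case, the energy inequality controls only $\int_0^T\langle\C Sw_\delta,w_\delta\rangle\ds$, i.e.\ $\nabla w_\delta$ in $L^2$, not the full $L^2(0,T;H^1)$-norm of $w_\delta$. To close the a priori bounds you also need a $\delta$-uniform estimate on $\dashint_\Omega w_\delta(t)\dx$. The paper obtains this separately (Lemma~6.2, following \cite[Lemma~4.3]{GarckeHabil}) by a direct test-function argument bounding $\dashint_\Omega\mathbb P W_{,c}^{\mathrm{ch},\delta}(c_\delta)\dx$ in $L^2(0,T)$; this step is logically prior to your $L^q$ log-estimate, since the latter uses $w_\delta$ on the left-hand side of equation~(iii). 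You should insert this argument before your ``decisive new step''.
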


	\begin{remark}
		Note that for Theorem \ref{theorem:mainTheoremLog} the assumptions \eqref{eqn:growthEst2}, \eqref{eqn:growthEst5} and \eqref{eqn:growthEst6}
		can be replaced by
		\begin{align*}
			\tag{A2'}
			W^\mathrm{el}(e,c,z)\leq C (|e|^2+1),\\
			\tag{A5'}
			|W_{,c}^\mathrm{el}(e,c,z)|\leq C (|e|^2+1),\\
			\tag{A6'}
			|W_{,z}^\mathrm{el}(e,c,z)|\leq C (|e|^2+1),
		\end{align*}
		for all $c\in\R^N$ with $0\leq c_k\leq 1$ and $\sum_{k=1}^N c_k = 1$, all $e\in\R_\mathrm{sym}^{n\times n}$ and all $z\in\R$ with $0\leq z\leq 1$.
	\end{remark}


\section{Existence of weak solutions of \eqref{eqn:regUnifyingModelClassical}}
	The proof is based on \cite{HeiKraus}. Arguments similar to
	\cite{HeiKraus} are only sketched.

	Since $\varepsilon>0$ is fixed in this section, we omit the $\varepsilon$-dependence in the notation, e.g.
	$\mathcal E$ always means here $\mathcal E_\varepsilon$ and so on.
	Furthermore, $z^0$ is assumed to be in $W^{1,p}(\Omega)$ in this section.
	\begin{itemize}
		\item[\textsc{1.}]\textsc{Step: constructing time-discrete solutions.}
		
		Set $u^0$ to be a minimizer of $u\mapsto \mathcal E(u,c^0,z^0)$
		defined on the space $W^{1,4}(\Omega)$ with the constraint $u|_{D}=b(0)|_{D}$ in the sense of traces.
		
		Let the closed subspace $\mathcal Q_M^m$ of $H^1(\Omega;\mathbb R^n)\times H^1(\Omega;\mathbb R^N)\times
		W^{1,p}(\Omega)$ be defined by:
		\begin{align*}
			\mathcal Q_M^m=
			\left\{
			\begin{array}{l}
				u\in H^1(\Omega;\mathbb R^n),\\
				c\in H^1(\Omega;\mathbb R^N),\\
				z\in W^{1,p}(\Omega)
			\end{array}
			\Bigg|
			\begin{array}{l}
				u|_D=b(m\tau)|_{D},\\
				\int_\Omega c-c^0\,\mathrm dx=0\text{ for C-H systems,}\\
				0\leq z\leq z_M^{m-1}.
			\end{array}
			\right\}
		\end{align*}
		Based on the initial triple $(u^0,c^0,z^0)$, we construct $(u_M^m,c_M^m,z_M^m)$ for $m=1,\ldots,M$ recursively by minimizing
		the following functional $\mathbb E^m_M: \mathcal Q_M^m \to \mathbb R$:
		\begin{align}
			&\mathbb E^m_M(u,c,z) :=\tilde{\mathcal E}(u,c,z)
				+\tau\tilde{\mathcal R}\left(\frac{z-z_M^{m-1}}{\tau}\right)
				+\frac{\tau}{2}\left\|\frac{c-c_M^{m-1}}{\tau}\right\|_{X}^2
				+\frac{\varepsilon\tau}{2}\left\|\frac{c-c_M^{m-1}}{\tau}\right\|_{L^2}^2,
			\label{eqn:discreteE}
		\end{align}
		where $X$ denotes the space $\mathcal D$ (see \eqref{eqn:Dspace}) with the scalar-product
		\begin{align*}
			(c_1\,|\,c_2)_{X}:=\int_\Omega\mathbb M\nabla \mathcal S^{-1}c_1\cdot \nabla \mathcal S^{-1}c_2\dx
		\end{align*}
		for Cahn-Hilliard systems and $X=L^2(\Omega;\mathbb R^N)$ with the scalar-product
		\begin{align*}
			(c_1\,|\,c_2)_{X}:=\int_\Omega\mathbb M c_1\cdot c_2\dx
		\end{align*}
		for Allen-Cahn systems.

		Note that the last regularization term in \eqref{eqn:discreteE} is not necessary for Allen-Cahn equations due to the term
		with the $X$-norm. To use a uniform approach, we consider this term in both systems.
		By direct methods of calculus of variations the triple
		\begin{align*}
			(u_M^m,c_M^m,z_M^m):=\mathop{\mathrm{arg\,min}}_{(u,c,z)\in\mathcal Q_M^m}\;\mathbb E^m_M(u,c,z)
		\end{align*}
		exists, cf.~\cite{HeiKraus}. Furthermore, we set
		\begin{align*}
			w_M^m:=
			\begin{cases}
				-\mathcal S^{-1}\left(\frac{c_M^m-c_M^{m-1}}{\tau}\right)+\lambda_M^m&\text{for C-H systems},\\
				-\mathcal S^{-1}\left(\frac{c_M^m-c_M^{m-1}}{\tau}\right)&\text{for A-C systems},
			\end{cases}
		\end{align*}
		with the Lagrange multiplier $\lambda_M^m$ (associated with the mass constraint for C-H systems) given by
		\begin{align*}
			\lambda_M^m:=\dashint_\Omega W_{,c}^\mathrm{ch,pol}(c_M^m)+W_{,c}^\mathrm{el}(e(u_M^m),c_M^m,z_M^m)\,\mathrm dx.
		\end{align*}
		We define the time incremental solutions as 
                $$q_M^m:=(u_M^m,c_M^m,w_M^m,z_M^m)$$ 
                and introduce the piecewise constant interpolations
		$q_M$, $q_M^-,t_M,t_M^-$
		and the linear interpolation $\hat q_M$ as
		\begin{align*}
			t_M&:=\min\{m\tau\,|\,m\in\mathbb N_0\text{ and }m\tau\geq t\},\\
			t_M^-&:=\min\{(m-1)\tau\,|\,m\in\mathbb N_0\text{ and }m\tau\geq t\},\\
			q_M(t)&:=q^m_M\text{ for }t\in \big((m-1)\tau,m\tau\big],\\
			q_M^-(t)&:=q^m_M\text{ for }t\in \big[m\tau,(m+1)\tau\big),\\
			\hat q_M(t)&:=\beta q^m_M+(1-\beta)q^{m-1}_M\text{ for }t\in \big[(m-1)\tau,m\tau\big)
				\text{ and }\beta=\frac{t}{\tau}-(m-1).
		\end{align*}

		Due to the minimization properties of $(u_M^m,c_M^m,z_M^m)$, we establish the following variational formulas and energy estimate
		(cf. \cite[Lemma 6.2]{HeiKraus}):
		\begin{lemma}[Euler-Lagrange equation, energy estimate]
		\label{lemma:eulerLagrange}
			The functions $q_M$, $q_M^-$ and $\hat q_M$ satisfy the following properties for all $t\in(0,T)$:
			\begin{enumerate}
			\renewcommand{\labelenumi}{(\roman{enumi})}
				\item
					for all $\zeta\in H^1(\Omega;\mathbb R^N)$:
					\begin{equation}
						\int_{\Omega}(\partial_t\hat c_M(t))\cdot\zeta\,\mathrm dx
							=-\langle\mathcal Sw_M(t),\zeta\rangle
					\label{eqn:discreteSolution1}
					\end{equation}
				\item
					for all $\zeta\in H^1(\Omega;\mathbb R^N)$:
					\begin{align}
						\int_{\Omega} w_M(t)\cdot\zeta\,\mathrm dx
							={}&\int_{\Omega}\mathbb P\mathbf\Gamma\nabla c_M(t):\nabla\zeta+\mathbb PW_{,c}^\mathrm{ch,pol}(c_M(t))
							\cdot\zeta\,\mathrm dx\notag\\
						&+\int_\Omega \mathbb P W_{,c}^\mathrm{el}(e(u_M(t)),c_M(t),z_M(t))\cdot\zeta
							+\varepsilon\partial_t\hat c_M(t)\cdot\zeta\,\mathrm dx
					\label{eqn:discreteSolution2}
					\end{align}
				\item
					for all $\zeta\in W_D^{1,4}(\Omega;\mathbb R^n)$:
					\begin{equation}
						\int_{\Omega} W_{,e}^\mathrm{el}(e(u_M(t)),c_M(t),z_M(t)):e(\zeta)+\varepsilon|\nabla u_M(t)|^2\nabla u_M(t):
							\nabla\zeta\,\mathrm dx=0
					\label{eqn:discreteSolution3}
					\end{equation}
				\item
					for all $\zeta\in W^{1,p}(\Omega)$ with $0\leq\zeta+z_M(t)\leq z_M^-(t)$:
					\begin{align}
						&\int_{\Omega} (\varepsilon|\nabla z_M(t)|^{p-2}+1)\nabla z_M(t)\cdot\nabla\zeta
							+W_{,z}^\mathrm{el}(e(u_M(t)),c_M(t),z_M(t))\zeta\,\mathrm dx\notag\\
						&\qquad\qquad+\int_\Omega(-\alpha+\beta(\partial_t\hat z_M(t)))\zeta\,\mathrm dx\geq 0
					\label{eqn:discreteSolution4}
					\end{align}
				\item energy estimate:
					\begin{align}
						&\mathcal E(u_M(t),c_M(t),z_M(t))
							+\int_{0}^{t_M}\int_\Omega-\alpha\partial_t\hat z_M+\frac{\beta}{2}|\partial_t\hat z_M|^2
							+\frac{\varepsilon}{2}|\partial_t\hat c_M|^2\,\mathrm dx\mathrm ds\notag\\
						&+\int_0^{t_M}\frac 12\langle\mathcal Sw_M(s),w_M(s)\rangle\,\mathrm ds
							-\mathcal E(u^0,c^0,z^0)\notag\\
						&\qquad\qquad\leq
							\int_{0}^{t_M}\int_{\Omega}W_{,e}^\mathrm{el}(e(u_M^{-}+b-b_M^{-}),c_M^{-},
							z_M):e(\partial_t b)\,\mathrm dx\mathrm ds\notag\\
						&\qquad\qquad\quad
							+\varepsilon\int_{0}^{t_M}\int_{\Omega}|\nabla u_M^-+\nabla b
							-\nabla b_M^-|^2\nabla(u_M^-+b-b_M^-):\nabla\partial_t b\,\mathrm dx\mathrm ds.
					\label{eqn:energyEstimate}
					\end{align}
			\end{enumerate}
		\end{lemma}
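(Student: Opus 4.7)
All five assertions rest on the minimality of $(u_M^m,c_M^m,z_M^m)$ in $\C Q_M^m$. Statements (i)--(iv) are first-order optimality conditions obtained by testing against admissible variations; (v) is a ``minimization-against-a-competitor'' estimate, using a competitor built by lifting the previous time step to the new Dirichlet data.

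For (iii), perturbations $u_M^m+\epsilon\zeta$ with $\zeta\in W^{1,4}_D(\Omega;\R^n)$ preserve the Dirichlet trace, so the equation $\frac{\mathrm d}{\mathrm d\epsilon}\big|_{\epsilon=0}\mathbb E_M^m=0$ yields \eqref{eqn:discreteSolution3}. For (iv), perturbations $z_M^m+\epsilon\zeta$ with $0\le z_M^m+\zeta\le z_M^-$ remain admissible for every $\epsilon\in[0,1]$ (as convex combinations of two elements of $[0,z_M^-]$), so the one-sided derivative at $\epsilon=0^+$ is non-negative and produces \eqref{eqn:discreteSolution4}. For (ii), the admissible $c$-variations are $\zeta\in H^1(\Omega;\R^N)$ with $\sum_k\zeta_k=0$ pointwise (keeping $c\in\Sigma$) and, for C--H, $\int_\Omega\zeta=0$ (mass preservation); the pointwise constraint is encoded by the projection $\mathbb P$, and the integral one generates the constant Lagrange multiplier $\lambda_M^m=\dashint_\Omega W^{\mathrm{ch,pol}}_{,c}(c_M^m)+W^{\mathrm{el}}_{,c}(e(u_M^m),c_M^m,z_M^m)\dx$. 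Differentiating the $X$-norm piece contributes $\frac{1}{\tau}(c_M^m-c_M^{m-1}\mid\zeta)_X=\langle\partial_t\hat c_M,\C S^{-1}\zeta\rangle$, and combining this with the other variational terms and the definition of $w_M^m$ yields \eqref{eqn:discreteSolution2}. Statement (i) then follows directly: applying $\C S$ to the defining identity of $w_M^m$ gives $\C Sw_M^m=-\partial_t\hat c_M$ (the constant $\lambda_M^m$ in the C--H case is annihilated by $\C S$ since $\nabla\lambda_M^m=0$, and is absent in A--C).

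For (v), insert the competitor $\bigl(u_M^{m-1}+b(m\tau)-b((m-1)\tau),\,c_M^{m-1},\,z_M^{m-1}\bigr)\in\C Q_M^m$ into the minimization inequality. Since $c$ and $z$ are unchanged, the dissipation $\tilde{\C R}$ and both norm-squared terms in \eqref{eqn:discreteE} vanish on the right, leaving only the $\tilde{\C E}$-difference. Expand this difference via the fundamental theorem of calculus along $r\mapsto u_M^{m-1}+r\bigl(b(m\tau)-b((m-1)\tau)\bigr)$, $r\in[0,1]$, and reparametrize $r=(s-(m-1)\tau)/\tau$ to obtain the time integral of $W_{,e}^{\mathrm{el}}(e(u_M^-+b-b_M^-),c_M^-,z_M):e(\partial_t b)$ plus the corresponding $\varepsilon|\nabla(u_M^-+b-b_M^-)|^2\nabla(\cdot):\nabla\partial_t b$ contribution. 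Summing over $m=1,\ldots,\lceil t/\tau\rceil$ telescopes the $\tilde{\C E}$-pieces into $\C E(u_M(t),c_M(t),z_M(t))-\C E(u^0,c^0,z^0)$; the identity $\frac{1}{2\tau}\|c_M^m-c_M^{m-1}\|_X^2=\frac{\tau}{2}\langle\C Sw_M^m,w_M^m\rangle$, immediate from $\C Sw_M^m=-\partial_t\hat c_M$, converts the $X$-norm piece into the dissipation integral, yielding \eqref{eqn:energyEstimate}.

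The main obstacle is the careful handling of the nonlinear $\varepsilon|\nabla u|^4$ regularization in the fundamental-theorem-of-calculus step: one must verify that its derivative along the path produces exactly the cubic integrand on the right of \eqref{eqn:energyEstimate}, and that \eqref{eqn:growthEst4} suffices to control the $W^{\mathrm{el}}_{,e}$ term evaluated on the shifted state $u_M^-+b-b_M^-$. A secondary subtlety is the consistent identification of $\lambda_M^m$ via the mean formula so that $w_M^m$ satisfies \eqref{eqn:discreteSolution2} for all test functions in $H^1(\Omega;\R^N)$, not only for mass-free ones.
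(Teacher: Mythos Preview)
Your proposal is correct and follows precisely the approach the paper has in mind; the paper itself does not write out a proof but simply refers to \cite[Lemma~6.2]{HeiKraus}, and your sketch is the standard derivation underlying that reference: Euler--Lagrange conditions from admissible variations for (i)--(iv), and a competitor test plus telescoping for (v).

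Two minor points. First, in the FTC step for (v) you should parametrize along $s\mapsto u_M^{m-1}+b(s)-b((m-1)\tau)$ on $[(m-1)\tau,m\tau]$ using the \emph{actual} $b(s)$, not the linear path $r\mapsto u_M^{m-1}+r(b(m\tau)-b((m-1)\tau))$ with the substitution $r=(s-(m-1)\tau)/\tau$; the latter does not produce $e(\partial_t b(s))$ and $u_M^-+b-b_M^-$ in the integrand unless $b$ is piecewise affine. With the correct parametrization (legitimate since $b\in W^{1,1}(0,T;W^{1,\infty})$) you recover exactly the right-hand side of \eqref{eqn:energyEstimate}. Second, your competitor yields $z_M^-$ rather than $z_M$ inside $W_{,e}^{\mathrm{el}}$ on the right; the paper's $z_M$ is in all likelihood a typo (it plays no role in the subsequent a-priori bounds, and the precise estimate in Lemma~\ref{lemma:preciseEnergyEstimate} anyway uses a different competitor $(u_M^{m-1}+b_M^m-b_M^{m-1},c_M^{m},z_M^{m})$).
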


			
		\item[\textsc{2.}]\textsc{Step: identifying convergent subsequences.}
			
			The energy estimate (v) in Lemma \ref{lemma:eulerLagrange}, growth condition \eqref{eqn:growthEst4} and a Gronwall estimation argument lead to
			a-priori estimates for the energy $\mathcal E(u_M(t),c_M(t),z_M(t))$ and for $\|\partial_t\hat z_M\|_{L^2(\Omega_T)}$,
			$\|\partial_t\hat c_M\|_{L^2(\Omega_T)}$ and $\int_0^{T}\langle\mathcal Sw_M(s),w_M(s)\rangle\,\mathrm ds$.
			By standard compactness arguments and a compactness theorem from Aubin and Lions
			\cite{Simon}, we deduce the following weak convergence
			properties, cf.~\cite{HeiKraus}:
			\begin{lemma}
			\label{lemma:weakConvergenceProperties}
				There exists a subsequence $\{ M_k \}$ and an element $q=(u,c,w,z)$ satisfying (i) from
				Definition \ref{def:weakSolutionRegularized} such that for a.e. $t\in[0,T]$:
				
				\begin{tabular}{ll}
					\begin{minipage}{19em}
						\begin{enumerate}
							\renewcommand{\labelenumi}{(\roman{enumi})}
							\item
								$u_{M_k}\stackrel{\star}{\rightharpoonup} u\text{ in }L^\infty(0,T;W^{1,4}(\Omega))$,
							\item
								$c_{M_k},c_{M_k}^-\stackrel{\star}{\rightharpoonup} c\text{ in }L^\infty(0,T;H^{1}(\Omega;\mathbb R^N))$,\\
								$c_{M_k}(t),c_{M_k}^-(t)\rightharpoonup c(t)\text{ in }H^{1}(\Omega;\mathbb R^N)$,\\
								$c_{M_k},c_{M_k}^-\rightarrow c\text{ a.e. in }\Omega_T$,\\
								$\hat c_{M_k}\rightharpoonup c\text{ in }H^1(0,T;L^2(\Omega;\mathbb R^N))$,
						\end{enumerate}
					\end{minipage}
					&
					\begin{minipage}{21em}
						\begin{enumerate}
							\renewcommand{\labelenumi}{(\roman{enumi})}
							\item[(iii)]
								$z_{M_k}, z^-_{M_k}\stackrel{\star}{\rightharpoonup} z\text{ in }L^\infty(0,T;W^{1,p}(\Omega))$,\\
								$z_{M_k}(t),z_{M_k}^-(t)\rightharpoonup z(t)\text{ in }W^{1,p}(\Omega)$,\\
								$z_{M_k},z_{M_k}^-\rightarrow z\text{ a.e. in }\Omega_T$,\\
								$\hat z_{M_k}\rightharpoonup z\text{ in } H^1(0,T;L^2(\Omega))$\\\\
						\end{enumerate}
					\end{minipage}
				\end{tabular}
				
				and\vspace{0.5em}
				
				\begin{tabular}{ll}
					\begin{minipage}{19em}
						\begin{enumerate}
							\renewcommand{\labelenumi}{(\roman{enumi})}
							\item[(iv)]
								$w_{M_k}\rightharpoonup w\text{ in } L^2(0,T;H^1(\Omega;\mathbb R^N))$\\
								$w_{M_k}\rightharpoonup w\text{ in } L^2(\Omega_T;\mathbb R^N)$
						\end{enumerate}
					\end{minipage}
					&
					\begin{minipage}{21em}
						\hspace{2.5em}for C-H systems,\\
						\hspace*{2.5em}for A-C systems
					\end{minipage}
				\end{tabular}
				
				as $k\rightarrow\infty$.
			\end{lemma}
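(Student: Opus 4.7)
\textbf{Proof plan for Lemma \ref{lemma:weakConvergenceProperties}.}
The strategy is to derive uniform a-priori bounds from the energy estimate \eqref{eqn:energyEstimate} via a Gronwall argument, then extract a subsequence by Banach--Alaoglu, and finally upgrade to a.e.\ convergence of $c_{M_k}$ and $z_{M_k}$ through the Aubin--Lions compactness theorem.

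First I would bound the right hand side of \eqref{eqn:energyEstimate}. Growth condition \eqref{eqn:growthEst4} applied with $e_1 = e(u_M^-)$, $e_2 = e(b-b_M^-)$ dominates the elastic-derivative term pointwise by $C\bigl(W^{\mathrm{el}}(e(u_M^-),c_M^-,z_M) + |e(b-b_M^-)| + 1\bigr)|e(\partial_t b)|$; since $b \in W^{1,1}(0,T;W^{1,\infty}(\Omega;\R^n))$ and $b_M^-$ is the piecewise constant interpolant of $b$ at the grid points, $\|e(b-b_M^-)\|_{L^\infty}$ is uniformly controlled in $M$ and $t$. The cubic regularization term $\varepsilon|\nabla(u_M^- + b - b_M^-)|^2\nabla(u_M^- + b - b_M^-):\nabla\partial_t b$ I would split with Young's inequality, retaining a small multiple of $|\nabla u_M^-|^4$ so that it can be absorbed into $\tilde{\mathcal E}(u_M,c_M,z_M)$ on the left, while the remainder is controlled by an $L^1(0,T)$-in-time quantity involving $\|\nabla\partial_t b\|_{L^4}^4$. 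Gronwall then yields uniform bounds on $\sup_t \mathcal E(u_M(t),c_M(t),z_M(t))$, on $\|\partial_t \hat z_M\|_{L^2(\Omega_T)}$, $\|\partial_t\hat c_M\|_{L^2(\Omega_T)}$, and on $\int_0^T\langle\mathcal Sw_M,w_M\rangle\ds$.

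These bounds, together with Korn's inequality for the Dirichlet-type condition $u_M|_D = b(t_M)|_D$ and the coercivity of $\mathcal S$ (positive definiteness of $\mathbb M$ on $T\Sigma$, plus a mean-value Poincar\'e inequality in the Cahn--Hilliard case), deliver $u_M$ bounded in $L^\infty(0,T;W^{1,4}(\Omega;\R^n))$, $c_M$ and $c_M^-$ bounded in $L^\infty(0,T;H^1)$, $z_M$ and $z_M^-$ bounded in $L^\infty(0,T;W^{1,p})$, and the claimed bound on $w_M$. Banach--Alaoglu applied successively extracts a subsequence $\{M_k\}$ realising all weak and weak-$\star$ convergences in (i)--(iv). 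For the a.e.\ convergence of $c_{M_k},c_{M_k}^-,z_{M_k},z_{M_k}^-$, I would apply Aubin--Lions to $\hat c_{M_k}$ and $\hat z_{M_k}$, which are bounded in $L^\infty(0,T;H^1)\cap H^1(0,T;L^2)$ and hence precompact in $L^2(\Omega_T)$ via the compact embedding $H^1 \hookrightarrow\hookrightarrow L^2$; a further subsequence converges a.e., and the estimate $\|c_M - \hat c_M\|_{L^2(\Omega_T)} \leq \tau\|\partial_t\hat c_M\|_{L^2(\Omega_T)}$ (and likewise for $z$) transfers this to the piecewise constant interpolants.

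The main obstacle I anticipate is closing the Gronwall argument cleanly in the presence of the cubic $\varepsilon$-term: a naive Young splitting produces a factor $|\nabla u_M^-|^3|\nabla\partial_t b|$ that is only controlled by $\delta|\nabla u_M^-|^4 + C_\delta|\nabla\partial_t b|^4$, and the coefficient $\delta$ must be chosen strictly below $\varepsilon/4$ so the term is genuinely absorbed into $\tilde{\mathcal E}$. Once this is set up, the sign and membership conditions in (i) of Definition \ref{def:weakSolutionRegularized} --- namely $c \in \Sigma$ a.e., $0 \leq z \leq 1$ and $\partial_t z \leq 0$ --- pass to the limit directly: the first two from the a.e.\ convergence and the pointwise constraints encoded in $\mathcal Q_M^m$, and the third from the weak convergence $\hat z_{M_k}\weaklim z$ in $H^1(0,T;L^2(\Omega))$ together with $\partial_t \hat z_M \leq 0$ a.e.\ by construction.
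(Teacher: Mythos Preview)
Your proposal is correct and follows essentially the same route as the paper, which itself only sketches the argument (energy estimate \eqref{eqn:energyEstimate} plus growth condition \eqref{eqn:growthEst4} and Gronwall for the a-priori bounds, then standard weak compactness and Aubin--Lions for the convergence, referring to \cite{HeiKraus} for details). One small point: your phrasing about ``absorbing'' $\delta|\nabla u_M^-|^4$ into $\tilde{\mathcal E}$ on the left is not quite how it works, since that term sits under a time integral at earlier times $s\leq t_M$ while the energy on the left is evaluated at $t$; rather, you bound the right-hand side by $C\int_0^{t_M}(\mathcal E(u_M^-(s),c_M^-(s),z_M(s))+1)\|\partial_t b(s)\|_{W^{1,\infty}}\ds$ and apply Gronwall directly---no smallness of $\delta$ is needed.
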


			Exploiting the Euler-Lagrange equations, we can even prove stronger convergence properties.
			To proceed, we recall an approximation lemma from \cite{HeiKraus}.
			\begin{lemma}[{\cite[Lemma 5.2]{HeiKraus}}]
			\label{lemma:approximation}
				Let $q\geq 1$, $p>n$ and $f,\zeta\in L^q(0,T;W^{1,p}_+(\Omega))$ with
				$\{\zeta=0\}\supseteq\{f=0\}$.
				Furthermore, let $\{f_M\}_{M\in\mathbb N}\subseteq L^q(0,T;W^{1,p}_+(\Omega))$ be a sequence with
				$f_M(t)\rightharpoonup f(t)$ in $W^{1,p}(\Omega)$ as $M\rightarrow\infty$ for a.e. $t\in[0,T]$.
				Then there exists a sequence $\{\zeta_M\}_{M\in\mathbb N}\subseteq L^q(0,T;W^{1,p}_+(\Omega))$
				and constants $\nu_{M,t}>0$ such that
				\begin{enumerate}
					\renewcommand{\labelenumi}{(\roman{enumi})}
				\item
					$\zeta_M\rightarrow \zeta$ in $L^q(0,T;W^{1,p}(\Omega))$ as $M\rightarrow\infty$,
				\item
					$\zeta_M\leq \zeta$ a.e. in $\Omega_T$ for all $M\in\mathbb N$,
				\item
					$\nu_{M,t}\zeta_M(t)\leq f_M(t)$ a.e. in $\Omega$ for a.e. $t\in[0,T]$ and for all $M\in\mathbb N$.
				\end{enumerate}
				If, in addition, $\zeta\leq f$ a.e. in $\Omega_T$ then condition (iii) can be refined to
				\begin{enumerate}
					\renewcommand{\labelenumi}{(\roman{enumi})}
				\item[(iii)'] $\zeta_M\leq f_M$ a.e. in $\Omega_T$ for all $M\in\mathbb N$.
				\end{enumerate}
			\end{lemma}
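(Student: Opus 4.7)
The key observation is that $p>n$ yields the compact embedding $W^{1,p}(\Omega)\hookrightarrow \C C^0(\overline\Omega)$, so for a.e.~$t$ the weak convergence $f_M(t)\weaklim f(t)$ in $W^{1,p}$ upgrades to uniform convergence in $\C C^0(\overline\Omega)$, and the zero sets $\{f_M(t)=0\}$, $\{f(t)=0\}$, $\{\zeta(t)=0\}$ are genuine closed subsets of $\overline\Omega$. My strategy is to approximate $\zeta$ from below by the truncations $\zeta^\delta:=(\zeta-\delta)^+$ and to couple the threshold $\delta=\delta_M(t)\searrow 0$ slowly enough with the index $M$ so that both the $L^q(W^{1,p})$-convergence (i) and the pointwise domination (iii) are achieved simultaneously.

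\textbf{Construction and domination.} For fixed $\delta>0$ and $t$, the set $K_\delta(t):=\{\zeta(t)\geq\delta\}$ is closed and, by the hypothesis $\{\zeta=0\}\supseteq\{f=0\}$, disjoint from $\{f(t)=0\}$, so $c_\delta(t):=\min_{K_\delta(t)}f(t)>0$ by continuity and compactness. Uniform convergence $f_M(t)\to f(t)$ furnishes an index $M_0(\delta,t)$ with $f_M(t)\geq c_\delta(t)/2$ on $K_\delta(t)$ for every $M\geq M_0$. Since $\operatorname{supp}\zeta^\delta(t)\subseteq K_\delta(t)$ and $\zeta^\delta(t)\leq\|\zeta(t)\|_{\C C^0}$, one obtains $\nu_{M,t}\zeta^\delta(t)\leq f_M(t)$ a.e.\ on $\Omega$ with $\nu_{M,t}:=c_\delta(t)/(2\|\zeta(t)\|_{\C C^0})>0$. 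Choosing $\delta_M(t)\searrow 0$ slowly enough that $M\geq M_0(\delta_M(t),t)$ for a.e.~$t$ and setting $\zeta_M(t):=\zeta^{\delta_M(t)}(t)$ secures (ii) and (iii) at once. Property (i) then follows by dominated convergence: the Sobolev chain rule gives $\nabla\zeta^\delta=\chi_{\{\zeta>\delta\}}\nabla\zeta$, hence $\zeta^\delta(t)\to\zeta(t)$ in $W^{1,p}(\Omega)$ as $\delta\to 0$ with the $t$-pointwise bound $\|\zeta^\delta(t)\|_{W^{1,p}}\leq\|\zeta(t)\|_{W^{1,p}}\in L^q(0,T)$; a second application of dominated convergence in $t$ yields $\zeta_M\to\zeta$ in $L^q(0,T;W^{1,p})$. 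For the refinement under the extra assumption $\zeta\leq f$, I would simply take $\zeta_M:=\min(\zeta,f_M)$: condition (iii)' is then immediate, $\zeta_M\leq\zeta$ holds trivially, and $|\zeta-\zeta_M|=(\zeta-f_M)^+\leq|f-f_M|\to 0$ uniformly, which upgrades to $W^{1,p}$-convergence via the truncation stability of $\min$ in $W^{1,p}$.

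\textbf{Main obstacle.} The delicate point is the measurable coupling of $\delta_M(t)$ with the pointwise-in-$t$ convergence rates: the quantities $c_\delta(t)$, $M_0(\delta,t)$ and $\|\zeta(t)\|_{\C C^0}$ depend measurably but not uniformly on $t$, so a diagonal/Egorov-type argument is needed to select $\delta_M(t)$ measurably in $t$ while ensuring $\delta_M(t)\to 0$ a.e. Once this measurable selection is in place, the remainder of the proof reduces to elementary continuity arguments and dominated convergence.
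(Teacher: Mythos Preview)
The paper does not actually prove this lemma; it is quoted verbatim from \cite{HeiKraus} (Lemma~5.2 there) and used as a black box. So there is no ``paper's own proof'' to compare against, and I will simply assess your argument on its merits.

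Your main construction via the truncations $\zeta^\delta=(\zeta-\delta)^+$ is the right idea and, once the measurable coupling $t\mapsto\delta_M(t)$ is made precise, yields (i)--(iii). The measurability can be arranged cheaply: for instance set $\varepsilon_M(t):=\|f_M(t)-f(t)\|_{\C C^0(\overline\Omega)}$, which is measurable in $t$ and tends to $0$ a.e., and then define $\delta_M(t)$ as the infimum of those $\delta\in\{1,\tfrac12,\tfrac13,\dots\}$ for which $\min_{K_\delta(t)}f(t)>2\varepsilon_M(t)$ (with the convention $\delta_M(t):=\|\zeta(t)\|_{\C C^0}+1$ if no such $\delta$ exists). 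This is an explicit measurable function of $t$, equals $0$ once $\varepsilon_M(t)=0$, and tends to $0$ a.e.\ because the minimum is strictly positive for each fixed $\delta$. So the ``main obstacle'' you flag is a technicality rather than a gap.

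Your treatment of the refinement (iii)$'$, however, is genuinely wrong. Taking $\zeta_M:=\min(\zeta,f_M)$ does \emph{not} force $\zeta_M\to\zeta$ in $W^{1,p}(\Omega)$, because $f_M$ converges only weakly in $W^{1,p}$ and the map $g\mapsto\min(\zeta,g)$ is not weak-to-strong continuous. A one-dimensional counterexample: on $\Omega=(0,1)$ with $p=2>n=1$, take $f\equiv\zeta\equiv 1$ and $f_M(x)=1+\tfrac1M\sin(Mx)$. Then $f_M\geq 0$, $f_M\rightharpoonup f$ in $H^1$, and $\zeta\leq f$, yet
\[
\zeta_M'(x)=\chi_{\{\sin(Mx)<0\}}\cos(Mx),\qquad \|\zeta_M'\|_{L^2(0,1)}^2\longrightarrow \tfrac14\neq 0,
\]
so $\zeta_M\not\to\zeta$ in $H^1$. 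The correct route to (iii)$'$ is to keep the truncation construction and observe that, under the extra hypothesis $\zeta\leq f$, one has on $K_\delta(t)$
\[
\zeta^\delta(t)=\zeta(t)-\delta\leq f(t)-\delta\leq f_M(t)\quad\text{whenever }\delta\geq\varepsilon_M(t),
\]
so choosing $\delta_M(t)\geq\varepsilon_M(t)$ (still with $\delta_M(t)\to 0$) yields $\zeta_M\leq f_M$ directly, i.e.\ $\nu_{M,t}=1$.
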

			We are now able to prove strong convergence results by using uniform convexity estimates.
			\begin{lemma}[Strong convergence of the time incremental solutions]
			\label{lemma:convergenceProperties}
				There exists a subsequence $\{M_k\}$ such that for a.e. $t\in[0,T]$:
				
				\begin{tabular}{ll}
					\begin{minipage}{19.4em}
						\begin{enumerate}
							\renewcommand{\labelenumi}{(\roman{enumi})}
							\item
								$u_{M_k},u_{M_k}^-\rightarrow u$ in $L^4(0,T;W^{1,4}(\Omega;\mathbb R^n))$,\\
								$u_{M_k}(t),u_{M_k}^-(t)\rightarrow u(t)$ in $W^{1,4}(\Omega;\mathbb R^n)$,\\
								$u_{M_k},u_{M_k}^-\rightarrow u$ a.e. in $\Omega_T$,
							\item
								$c_{M_k},c_{M_k}^-\rightarrow c$ in $L^{2^\star}(0,T;H^1(\Omega;\mathbb R^N))$,\\
								$c_{M_k}(t),c_{M_k}^-(t)\rightarrow c(t)$ in $H^{1}(\Omega;\mathbb R^N)$,\\
								$c_{M_k},c_{M_k}^-\rightarrow c$ a.e. in $\Omega_T$,\\
								$\hat c_{M_k}\rightharpoonup c\text{ in }H^1(0,T;L^2(\Omega;\mathbb R^N))$,
						\end{enumerate}
					\end{minipage}
					&
					\begin{minipage}{21em}
						\begin{enumerate}
							\renewcommand{\labelenumi}{(\roman{enumi})}
							\item[(iii)]
								$z_{M_k},z_{M_k}^-\rightarrow z$ in $L^p(0,T;W^{1,p}(\Omega))$,\\
								$z_{M_k}(t),z_{M_k}^-(t)\rightarrow z(t)$ in $W^{1,p}(\Omega)$,\\
								$z_{M_k},z_{M_k}^-\rightarrow z$ a.e. in $\Omega_T$,\\
								$\hat z_{M_k}\rightharpoonup z$ in $H^1(0,T;L^2(\Omega))$\\\\\\\\\vspace{0.5em}
						\end{enumerate}
					\end{minipage}
				\end{tabular}
				
				as $k\rightarrow\infty$.
			\end{lemma}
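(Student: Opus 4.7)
The plan is to upgrade the weak convergences of Lemma \ref{lemma:weakConvergenceProperties} to strong ones by exploiting uniform monotonicity of the nonlinear principal parts of the Euler-Lagrange identities \eqref{eqn:discreteSolution2}--\eqref{eqn:discreteSolution4}, combined with the a.e.\ convergences already in hand and the growth bounds \eqref{eqn:growthEst1}--\eqref{eqn:growthEst8}. The rough template in each of the three cases is: test the discrete relation and its weak-solution counterpart (which one first establishes by passing to the limit in the Euler-Lagrange relations) with an appropriate difference, subtract, and use a uniform convexity estimate of $p$-Laplacian type.

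\textbf{Step 1 (displacement).} Fix $t$ where \eqref{eqn:discreteSolution3} is valid. Test that identity, together with the limiting identity \eqref{eqn:weaksolution3}, with $\zeta := u_{M_k}(t)-u(t)-\psi_{M_k}(t)$, where $\psi_{M_k}(t)\in W^{1,4}(\Omega;\mathbb R^n)$ is a Dirichlet correction satisfying $\psi_{M_k}|_D = b(t_{M_k})|_D-b(t)|_D$ and $\psi_{M_k}\to 0$ strongly in $W^{1,4}$ (constructed via a linear extension operator together with the continuity of $t\mapsto b(t)$ in $W^{1,\infty}$). Subtracting and invoking strict monotonicity \eqref{eqn:growthEst3} together with the uniform $4$-Laplacian estimate
$$
(|\xi_1|^2\xi_1-|\xi_2|^2\xi_2):(\xi_1-\xi_2)\geq c\,|\xi_1-\xi_2|^4,
$$
one obtains $\|u_{M_k}(t)-u(t)\|_{W^{1,4}}^4\le o(1)$, the $o(1)$ being controlled by Rellich compactness of $c_{M_k}(t),z_{M_k}(t)$ in $L^r$ and dominated convergence applied to the coupling terms through \eqref{eqn:growthEst2}, \eqref{eqn:growthEst4}. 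The $L^4(0,T;W^{1,4})$ statement then follows from the uniform $L^\infty(0,T;W^{1,4})$ bound and Lebesgue dominated convergence in time.

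\textbf{Step 2 (concentration).} Test \eqref{eqn:discreteSolution2} and \eqref{eqn:weaksolution2} with $\zeta := c_{M_k}(t)-c(t)\in H^1(\Omega;\mathbb R^N)$, noting that $c_{M_k}(t)-c(t)\in T\Sigma$ so that $\mathbb P$ acts trivially on the left-hand side. Subtracting yields
$$
\int_\Omega \mathbb P\mathbf\Gamma\nabla(c_{M_k}-c):\nabla(c_{M_k}-c)\,\dx = \int_\Omega(w_{M_k}-w)\cdot(c_{M_k}-c)\,\dx + R_k,
$$
where $R_k$ collects the zeroth-order differences of $W^{\mathrm{ch,pol}}_{,c}$, $W^\mathrm{el}_{,c}$ and the dissipation regularization $\varepsilon\partial_t\hat c_{M_k}$. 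By Rellich $c_{M_k}(t)\to c(t)$ strongly in $L^r$ for $r<2^\star$; combined with a.e.\ convergence of $u_{M_k},z_{M_k}$ and the polynomial bounds \eqref{eqn:growthEst5}, \eqref{eqn:growthEst7}, dominated convergence gives $R_k\to 0$. The $w$-term vanishes in the limit by weak-strong pairing. Positive-definiteness of $\mathbb P\mathbf\Gamma$ on $T\Sigma$ then yields $\nabla c_{M_k}(t)\to\nabla c(t)$ in $L^2(\Omega)$ pointwise a.e.\ in $t$; the $L^{2^\star}(0,T;H^1)$ refinement follows from the $L^\infty(0,T;H^1)$ bound and Lebesgue dominated convergence combined with the Sobolev embedding.

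\textbf{Step 3 (damage) -- the main obstacle.} The one-sided variational inequality \eqref{eqn:discreteSolution4} forbids plugging in $\zeta = z(t)-z_{M_k}(t)$ directly, since admissibility demands $-z_{M_k}\leq\zeta\leq z_{M_k}^--z_{M_k}$. The cure is Lemma \ref{lemma:approximation}: apply it first with $f_{M_k}:=z_{M_k}$, $f:=z$ to construct $\zeta_{M_k}^+\in L^p(0,T;W^{1,p}_+)$ approximating $(z-z_{M_k})^+\wedge z_{M_k}$ in $L^p(0,T;W^{1,p})$ with $\zeta_{M_k}^+\leq z_{M_k}$, and then with $f_{M_k}:=z_{M_k}^--z_{M_k}$, $f:=0$ to produce $\zeta_{M_k}^-$ approximating $(z-z_{M_k})^-$ with $\zeta_{M_k}^-\leq z_{M_k}^--z_{M_k}$. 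Inserting $\zeta := \zeta_{M_k}^--\zeta_{M_k}^+$ into \eqref{eqn:discreteSolution4}, subtracting the corresponding limit inequality obtained from \eqref{eqn:weaksolution4}, and exploiting the uniform $p$-monotonicity
$$
(|\xi_1|^{p-2}\xi_1-|\xi_2|^{p-2}\xi_2)\cdot(\xi_1-\xi_2)\geq c\,|\xi_1-\xi_2|^p,
$$
together with a.e.\ convergences and \eqref{eqn:growthEst6} to dispose of the zeroth-order contributions, one arrives at $\varepsilon\|\nabla z_{M_k}(t)-\nabla z(t)\|_{L^p}^p = o(1)$; the term $\int|\nabla z_{M_k}-\nabla z|^2$ coming from the linear Laplacian is absorbed analogously. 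Passage to $L^p(0,T;W^{1,p})$ then follows from the $L^\infty(0,T;W^{1,p})$ bound and dominated convergence. The critical technical point throughout Step 3 is the careful choice of admissible test functions produced by Lemma \ref{lemma:approximation}; once this is settled, the remaining arguments are parallel to Steps 1 and 2.
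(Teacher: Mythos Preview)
Your overall template is right, but two concrete problems keep Step~3 from going through, and Steps~1--2, while salvageable, are presented in a circular way.

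\textbf{Circularity.} You repeatedly invoke the limit identities \eqref{eqn:weaksolution2}--\eqref{eqn:weaksolution4} (``which one first establishes by passing to the limit''). But passing to the limit in \eqref{eqn:discreteSolution3} requires identifying the weak limit of $|\nabla u_{M_k}|^{2}\nabla u_{M_k}$, and in \eqref{eqn:discreteSolution4} that of $|\nabla z_{M_k}|^{p-2}\nabla z_{M_k}$; both identifications \emph{are} the strong convergence you are trying to prove. For $u$ and $c$ this is repairable: you never actually need the limit equation, only the trick of adding and subtracting the nonlinearity evaluated at the weak limit, then pairing the ``fixed'' piece with the weak convergence $\nabla u_{M_k}\rightharpoonup\nabla u$ (resp.\ $\nabla c_{M_k}\rightharpoonup\nabla c$). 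The paper's proof of (ii) does exactly this in an even simpler form: it tests \eqref{eqn:discreteSolution2} once with $\zeta=c_{M_k}$ and once with $\zeta=c$, passes to the limit in both, and reads off norm convergence $\int\mathbf\Gamma\nabla c_{M_k}:\nabla c_{M_k}\to\int\mathbf\Gamma\nabla c:\nabla c$ directly---no limit equation, no subtraction.

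\textbf{Misapplication of Lemma~\ref{lemma:approximation}.} In Step~3 you feed the lemma a target $\zeta=(z-z_{M_k})^{+}\wedge z_{M_k}$ that depends on $M_k$; the lemma, however, approximates a \emph{fixed} $\zeta$ (with $\{\zeta=0\}\supseteq\{f=0\}$) by a sequence adapted to $f_{M_k}$. Your second application, with $f=0$, forces $\{\zeta=0\}\supseteq\Omega$, i.e.\ $\zeta\equiv 0$, so nothing is produced. The paper's fix is much simpler: apply Lemma~\ref{lemma:approximation} \emph{once}, with $f=\zeta=z$ and $f_{M}=z_{M}^{-}$, to obtain $\zeta_{M}\to z$ in $L^{p}(0,T;W^{1,p})$ with $0\le\zeta_{M}\le z_{M}^{-}$. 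Then $\zeta_{M}-z_{M}$ is admissible in \eqref{eqn:discreteSolution4}, and one writes
\[
C_{\mathrm{uc}}\,\varepsilon\!\int|\nabla z_{M}-\nabla z|^{p}+\!\int|\nabla z_{M}-\nabla z|^{2}
\le \int(\varepsilon|\nabla z_{M}|^{p-2}+1)\nabla z_{M}\cdot\nabla(z_{M}-\zeta_{M})+\text{remainders},
\]
where the first term on the right is controlled by \eqref{eqn:discreteSolution4} tested with $\zeta_{M}-z_{M}$, and the remainders vanish by $\zeta_{M}\to z$ strongly and $z_{M}\rightharpoonup z$ weakly. No limit inequality \eqref{eqn:weaksolution4} is ever used.
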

			\begin{proof}
			  We omit the index $k$ in the proof.
				\begin{enumerate}
					\item[(i)] We refer to \cite[Lemma 5.9]{HeiKraus}.
					\item[(ii)]
						The weak convergence properties for $c_M$, $c_M^-$ and $\hat c_{M_k}$ follow from Lemma \ref{lemma:weakConvergenceProperties}.
						It remains to show strong convergence of $\nabla c_M$ to $\nabla c$ in $L^2(\Omega_T;\mathbb R^N)$.
						
						By the compact embedding $H^1(\Omega;\R^N)\hookrightarrow L^{2^\star/2+1}(\Omega;\mathbb R^N)$ and
						Lemma \ref{lemma:weakConvergenceProperties}, we get $\|c_M(t)-c(t)\|_{L^{2^\star/2+1}(\Omega;\R^N)}\rightarrow 0$ as $M\rightarrow \infty$
						for a.e. $t\in[0,T]$.
						The boundedness property $\esssup_{t\in[0,T]}\|c_M(t)-c(t)\|_{L^{2^\star/2+1}(\Omega;\R^N)}<C$ for all $M\in\N$ and Lebesgue's convergence theorem yield
						$c_M\rightarrow c$ as $M\rightarrow\infty$ in $L^{2^\star/2+1}(\Omega_T;\mathbb R^N)$.
						Testing \eqref{eqn:discreteSolution2} with $\zeta=c_M(t)$ and with
						$\zeta=c(t)$ gives after integration from $t=0$ to $t=T$:
						\begin{align*}
							\int_{\Omega_T} \mathbb P\mathbf\Gamma\nabla c_M:\nabla c_M\,\mathrm dx\mathrm dt
								={}&\int_{\Omega_T}w_M\cdot c_M-\mathbb PW_{,c}^\mathrm{ch,pol}(c_M)
								\cdot c_M\,\mathrm dx\mathrm dt\\
							&-\int_{\Omega_T} \mathbb P W_{,c}^\mathrm{el}(e(u_M),c_M,z_M)\cdot c_M
								+\varepsilon\partial_t\hat c_M\cdot c_M\,\mathrm dx\mathrm dt,\\
							\int_{\Omega_T} \mathbb P\mathbf\Gamma\nabla c_M:\nabla c\,\mathrm dx\mathrm dt
								={}&\int_{\Omega_T}w_M\cdot c-\mathbb PW_{,c}^\mathrm{ch,pol}(c_M)
								\cdot c\,\mathrm dx\mathrm dt\\
							&-\int_{\Omega_T} \mathbb P W_{,c}^\mathrm{el}(e(u_M),c_M,z_M)\cdot c
								+\varepsilon\partial_t\hat c_M\cdot c\,\mathrm dx\mathrm dt.
						\end{align*}
						Passing to $M\rightarrow\infty$ and comparing the right sides of the equations shows
						\begin{align*}
							\int_{\Omega_T} \mathbb P\mathbf\Gamma\nabla c_M:\nabla c_M\,\mathrm dx\mathrm dt
							\rightarrow
							\int_{\Omega_T} \mathbb P\mathbf\Gamma\nabla c:\nabla c\,\mathrm dx\mathrm dt.
						\end{align*}
						By using the properties $\mathbb P\nabla c_M=\nabla c_M$ and $\mathbb P\nabla c=\nabla c$, we eventually obtain
						$$
							\int_{\Omega_T}\mathbf\Gamma\nabla c_M:\nabla c_M\dx\dt\rightarrow
							\int_{\Omega_T}\mathbf\Gamma\nabla c:\nabla c\dx\dt.
						$$
						We end up with
						$$
							\int_{\Omega_T}\mathbf\Gamma(\nabla c_M-\nabla c):(\nabla c_M-\nabla c)\dx\dt\rightarrow 0.
						$$
						Therefore $\nabla c_M\rightarrow \nabla c$ in $L^2(\Omega_T;\mathbb R^N)$ since
						$\mathbf\Gamma$ is positive definite.
					\item[(iii)]
						Applying Lemma \ref{lemma:approximation} with $f=z$ and $f_M=z_M^-$ and $\zeta=z$ gives
						an approximation sequence $\{\zeta_M\}\subseteq L^p(0,T;W^{1,p}_+(\Omega))$ with
						the properties:
						\begin{subequations}
							\begin{align}
							\label{eqn:zApprox}
								&\zeta_M\rightarrow z\text{ in }L^p(0,T;W^{1,p}(\Omega)),\\
							\label{eqn:zApprox2}
								&0\leq \zeta_M\leq z_M^-\text{ for all }M\in\mathbb N.
							\end{align}
						\end{subequations}
						The estimate
						\begin{align*}
							C_\mathrm{uc}|\nabla z_M-\nabla z|^p\leq (|\nabla z_M|^{p-2}\nabla z_M-|\nabla z|^{p-2}
								\nabla z)\cdot \nabla(z_M-z)
						\end{align*}
						where $C_\mathrm{uc}>0$ is a constant
						and equation \eqref{eqn:discreteSolution4} tested with $\zeta=\zeta_M(t)-z_M(t)$ (possible due to \eqref{eqn:zApprox2})
						yield:
						\begin{align*}
							&C_\mathrm{uc}\int_{\Omega_T}\varepsilon|\nabla z_M-\nabla z|^p\,\mathrm dx\mathrm dt
								+\int_{\Omega_T}|\nabla z_M-\nabla z|^2\,\mathrm dx\mathrm dt\\
							&\qquad\leq \int_{\Omega_T} \big((\varepsilon|\nabla z_M|^{p-2}+1)\nabla z_M-(\varepsilon|\nabla z|^{p-2}+1)
								\nabla z\big)\cdot \nabla(z_M-z)\,\mathrm dx\mathrm dt\\
							&\qquad\leq \int_{\Omega_T} (\varepsilon|\nabla z_M|^{p-2}+1)\nabla z_M
								\cdot \nabla(z_M-\zeta_M)\,\mathrm dx\mathrm dt\\
							&\qquad\quad +\int_{\Omega_T} (\varepsilon|\nabla z_M|^{p-2}+1)\nabla z_M\cdot \nabla(\zeta_M-z)
								-(\varepsilon|\nabla z|^{p-2}+1)\nabla z\cdot \nabla(z_M-z)
								\,\mathrm dx\mathrm dt\\
							&\qquad\leq \int_{\Omega_T}(W_{,z}^\mathrm{el}(e(u_M),c_M,z_M)-\alpha
								+\beta\partial_t \hat z_M)(\zeta_M-z_M)\,\mathrm dx\mathrm dt\\
							&\qquad\quad +\int_{\Omega_T} (\varepsilon|\nabla z_M|^{p-2}+1)\nabla z_M\cdot \nabla(\zeta_M-z)
								-(\varepsilon|\nabla z|^{p-2}+1)\nabla z\cdot \nabla(z_M-z)\,\mathrm dx\mathrm dt\\
							&\qquad\leq \underbrace{\|W_{,z}^\mathrm{el}(e(u_M),c_M,z_M)
								-\alpha+\beta\partial_t \hat z_M\|_{L^2(\Omega_T)}
								}_{\text{bounded}}\|\zeta_M-z_M\|_{L^2(\Omega_T)}\\
							&\qquad\quad
								+\underbrace{(\varepsilon\|\nabla z_M\|_{L^p(\Omega_T)}^{p-1}
								+\|\nabla z_M\|_{L^{p/(p-1)}(\Omega_T)})}_{\text{bounded}}
								\|\nabla\zeta_M-\nabla z\|_{L^p(\Omega_T)}\\
							&\qquad\quad
								-\int_{\Omega_T}(\varepsilon|\nabla z|^{p-2}+1)\nabla z\cdot \nabla(z_M-z)\,\mathrm dx\mathrm dt
						\end{align*}
						Due to \eqref{eqn:zApprox} and $z_M\stackrel{\star}{\rightharpoonup}z$ in $L^\infty(0,T;W^{1,p}(\Omega))$
						as well as $z_M\rightarrow z$ in $L^2(\Omega_T)$,
						each term on the right hand side converges to $0$ as $M\rightarrow\infty$.\ep
				\end{enumerate}
			\end{proof}\\
		      \item[\textsc{3.}]\textsc{Step: establishing a precise energy inequality.}
		
			In this step we establish an asymptotic energy inequality, which is sharper than the energy inequality
			in \eqref{eqn:energyEstimate}. Note, that compared to \eqref{eqn:energyEstimate} the factor $1/2$ in front of
			$\langle\mathcal Sw_M(s),w_M(s)\rangle$ is missing.
			To simplify notation, we omit the index $k$ in the following.
			\begin{lemma}
				\label{lemma:preciseEnergyEstimate}
				For every $t\in[0,T]$:
				\begin{align*}
					&\mathcal E(u_M(t),c_M(t),z_M(t))
						+\int_{0}^{t_M}\int_\Omega-\alpha\partial_t\hat z_M+\beta|\partial_t\hat z_M|^2
						+\varepsilon|\partial_t\hat c_M|^2\,\mathrm dx\mathrm ds\\
					&+\int_0^{t_M}\langle\mathcal Sw_M(s),w_M(s)\rangle\,\mathrm ds-\mathcal E(u^0,c^0,z^0)\\
					&\qquad\qquad\leq
						\int_{0}^{t_M}\int_{\Omega}W_{,e}^\mathrm{el}(e(u_M^{-}+b-b_M^{-}),c_M^{-},
						z_M):e(\partial_t b)\,\mathrm dx\mathrm ds\\
					&\qquad\qquad\quad
						+\varepsilon\int_{0}^{t_M}\int_{\Omega}|\nabla u_M^-+\nabla b
						-\nabla b_M^-|^2\nabla(u_M^-+b-b_M^-):\nabla\partial_t b\,\mathrm dx\mathrm ds+\kappa_M
				\end{align*}
				with $\kappa_M\rightarrow 0$ as $M\rightarrow\infty$.
			\end{lemma}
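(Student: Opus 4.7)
The estimate \eqref{eqn:energyEstimate} was obtained by testing the minimality of $(u_M^m,c_M^m,z_M^m)$ in $\mathbb E_M^m$ against a competitor, which puts the factor $\tfrac{1}{2}$ in front of the dissipation terms coming from the quadratic kinetic penalty $\tfrac{\tau}{2}\|\partial c\|_X^2+\tfrac{\varepsilon\tau}{2}\|\partial c\|_{L^2}^2+\tfrac{\beta\tau}{2}\|\partial z\|_{L^2}^2$. To obtain the sharper inequality with the full coefficients, I would instead test the Euler-Lagrange identities \eqref{eqn:discreteSolution1}-\eqref{eqn:discreteSolution4} with the discrete time increments; this is the discrete analogue of the chain-rule calculation that yields the continuous energy balance in Proposition \ref{prop:energeticFormulation}. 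The price is a collection of second-order Taylor remainders gathered into $\kappa_M$, which must vanish in the limit.

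\textbf{Concentration contribution.} On each interval $((m-1)\tau,m\tau]$ I would test \eqref{eqn:discreteSolution1} with $\zeta=w_M^m$, giving $\langle\mathcal S w_M,w_M\rangle=-\int_\Omega\partial_t\hat c_M\cdot w_M\dx$, and test \eqref{eqn:discreteSolution2} with $\zeta=c_M^m-c_M^{m-1}\in T\Sigma$ (so that $\mathbb P\zeta=\zeta$). Combining these identities and using the polarization identity
\[
\mathbf\Gamma\nabla c_M^m:\nabla(c_M^m-c_M^{m-1})=\tfrac{1}{2}\bigl(\mathbf\Gamma\nabla c_M^m:\nabla c_M^m-\mathbf\Gamma\nabla c_M^{m-1}:\nabla c_M^{m-1}+\mathbf\Gamma\nabla(c_M^m-c_M^{m-1}):\nabla(c_M^m-c_M^{m-1})\bigr)
\]
together with the first-order Taylor expansions of $W^{\mathrm{ch,pol}}$ and of $W^{\mathrm{el}}(e(u_M^m),\cdot,z_M^m)$ about $c_M^m$, one obtains a discrete identity producing, after telescoping, $\int_0^{t_M}(\langle\mathcal S w_M,w_M\rangle+\varepsilon\|\partial_t\hat c_M\|_{L^2}^2)\ds$ (with the full coefficient) and the $c$-part of the energy increment, modulo a coupling to the $(u,z)$-part of $W^{\mathrm{el}}$ and remainders $R_{\mathrm{ch}}^m$, $R_{\mathrm{el},c}^m$.

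\textbf{Damage contribution.} I would test the variational inequality \eqref{eqn:discreteSolution4} with $\zeta=z_M^{m-1}-z_M^m$, which is admissible because $0\leq\zeta+z_M(t)=z_M^{m-1}=z_M^-(t)$. The convexity bounds $\nabla z_M^m\cdot\nabla(z_M^{m-1}-z_M^m)\leq\tfrac{1}{2}(|\nabla z_M^{m-1}|^2-|\nabla z_M^m|^2)$ and $|\nabla z_M^m|^{p-2}\nabla z_M^m\cdot\nabla(z_M^{m-1}-z_M^m)\leq\tfrac{1}{p}(|\nabla z_M^{m-1}|^p-|\nabla z_M^m|^p)$ yield the telescoping gradient-of-$z$ increments; the test direction $-\tau\partial_t\hat z_M$ directly produces the full $\beta\tau|\partial_t\hat z_M|^2$ and $-\alpha\tau\partial_t\hat z_M$; and a Taylor expansion of $W^{\mathrm{el}}$ in $z$ gives the $W_{,z}^{\mathrm{el}}\partial_t\hat z_M$ coupling plus a remainder $R_{\mathrm{el},z}^m$.

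\textbf{Displacement, summation, and error control.} For the $u$-contribution I would use the minimality of $u_M^m$ in $u$ against the competitor $u_M^{m-1}+(b_M^m-b_M^{m-1})$ exactly as in the derivation of \eqref{eqn:energyEstimate}; no sharpening is needed here because $\mathbb E_M^m$ has no kinetic penalty in $u$, and this reproduces the boundary-work terms of \eqref{eqn:energyEstimate} up to a remainder $R_{\mathrm{el},u}^m$. Summing the three contributions over $m=1,\ldots,m_t$, the cross-couplings combine through one further Taylor step into the telescoping increment of $\int_\Omega W^{\mathrm{el}}(e(u_M),c_M,z_M)\dx$; the left-hand side collapses to $\mathcal E(u_M(t),c_M(t),z_M(t))-\mathcal E(u^0,c^0,z^0)$ augmented by the \emph{full} dissipation terms, and the right-hand side becomes exactly that of \eqref{eqn:energyEstimate} plus $\kappa_M=\sum_{m,\ast}R_\ast^m$. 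The principal obstacle is then showing $\kappa_M\to 0$: every remainder has the schematic form $\int|W_{,\ast\ast}(\xi_M)|\,|q_M^m-q_M^{m-1}|^2\leq\tau\int_{\Omega_{t_M}}|W_{,\ast\ast}(\xi_M)|\,|\partial_t\hat q_M|^2$, so while the factor $\tau$ and the uniform $L^2$-bounds on $\partial_t\hat c_M,\partial_t\hat z_M$ provide two favourable factors, the second derivatives of $W^{\mathrm{ch,pol}}$ and $W^{\mathrm{el}}$ are only polynomially bounded in their arguments; the decisive input is the strong convergence of $(u_M,c_M,z_M)$ from Lemma \ref{lemma:convergenceProperties}, which supplies the equi-integrability of $|W_{,\ast\ast}(\xi_M)|$ needed to close the estimate.
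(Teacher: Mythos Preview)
Your strategy---testing \eqref{eqn:discreteSolution1}, \eqref{eqn:discreteSolution2}, \eqref{eqn:discreteSolution4} with the discrete increments and using minimality in $u$ against the shifted competitor---is exactly the paper's approach. The paper likewise obtains the full dissipation coefficients by inserting $\zeta=\partial_t\hat c_M(t)$ into \eqref{eqn:discreteSolution2}, $\zeta=w_M(t)$ into \eqref{eqn:discreteSolution1}, and $\zeta=-\tau\partial_t\hat z_M(t)$ into \eqref{eqn:discreteSolution4}; your convexity bounds for the gradient terms in $z$ are also the paper's.

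The one place your write-up diverges, and where there is a genuine gap, is in the organisation and control of the error $\kappa_M$. You express the remainders as second-order Taylor terms $\int|W_{,\ast\ast}(\xi_M)|\,|q_M^m-q_M^{m-1}|^2$ and invoke polynomial bounds on the Hessians. But the standing hypotheses give only $W^{\mathrm{el}},W^{\mathrm{ch,pol}}\in\mathcal C^1$, with growth conditions \eqref{eqn:growthEst5}--\eqref{eqn:growthEst7} on the \emph{first} derivatives; no second derivatives are assumed to exist, let alone to satisfy growth bounds. Moreover, even granting $\mathcal C^2$, your estimate $\tau\int|W_{,\ast\ast}(\xi_M)|\,|\partial_t\hat q_M|^2$ would need $W_{,\ast\ast}(\xi_M)\in L^\infty$ to pair with $|\partial_t\hat q_M|^2\in L^1$; equi-integrability of the Hessian factor does not suffice because the other factor is merely bounded in $L^1$, not uniformly integrable.

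The paper circumvents this by writing each error as a \emph{difference of first derivatives at two sets of arguments that converge to the same limit}, for instance
\[
\kappa_M^1=\int_{\Omega_{t_M}}\bigl(W_{,c}^{\mathrm{el}}(e(u_M^-+b_M-b_M^-),\hat c_M,z_M^-)-W_{,c}^{\mathrm{el}}(e(u_M),c_M,z_M)\bigr)\cdot\partial_t\hat c_M\,\mathrm dx\mathrm ds,
\]
and similarly for $\kappa_M^2,\kappa_M^3$. By H\"older this splits as $\|\Delta W_{,c}\|_{L^2}\|\partial_t\hat c_M\|_{L^2}$; the second factor is bounded, and the first goes to zero by the strong convergences of Lemma \ref{lemma:convergenceProperties} together with the $L^2$-domination furnished by \eqref{eqn:growthEst5}--\eqref{eqn:growthEst7} and the $L^4$-bound on $\nabla u_M$ (this is where the $\varepsilon|\nabla u|^4$ regularisation is essential). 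To make your proof go through, replace your second-order Taylor remainders by this first-order difference structure; the rest of your argument is correct.
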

			\begin{proof}
				Applying the estimate $\mathbb E_M^m(q_M^m)\leq \mathbb E_M^m(u_M^{m-1}+b_M^m-b_M^{m-1},c_M^{m},z_M^{m})$
				for $m=1$ to $\frac{t_M}{\tau}$ yields
				(cf. \cite[Lemma 6.10]{HeiKraus}):
				\begin{align}
					&\mathcal E(u_M(t),c_M(t),z_M(t))-\mathcal E(u^0,c^0,z^0)\notag\\
					&\qquad\leq\varepsilon\int_{0}^{t_M}\int_\Omega
						|\nabla(u_M^-+b(s)-b_M^-)|^2\nabla(u_M^-+b(s)-b_M^-):\nabla\partial_t b(s)\,\mathrm dx\mathrm ds\notag\\
					&\qquad\quad
						+\int_{0}^{t_M}\int_\Omega W_{,e}^\mathrm{el}(e(u_M^-+b-b_M^-),c_M^-,z_M^-):
						e(\partial_t b)\,\mathrm dx\mathrm ds\notag\\
					&\qquad\quad
						+\underbrace{\int_{0}^{t_M}\int_\Omega
						W_{,c}^\mathrm{el}(e(u_M^-+b_M-b_M^-),\hat c_M,z_M^-)\cdot
						\partial_t \hat c_M\,\mathrm dx\mathrm ds}_{(\star)_1}\notag\\
					&\qquad\quad+\underbrace{\int_{0}^{t_M}\int_\Omega\mathbf\Gamma\nabla \hat c_M: \nabla \partial_t\hat c_M
						+W_{,c}^\mathrm{ch,pol}(\hat c_M)\cdot\partial_t\hat c_M\,\mathrm dx\mathrm ds}_{(\star)_2}\notag\\
					&\qquad\quad+\underbrace{\int_{0}^{t_M}\int_\Omega
						W_{,z}^\mathrm{el}(e(u_M^-+b_M-b_M^-),c_M,\hat z_M)\partial_t\hat z_M\,\mathrm dx\mathrm ds}_{(\star\star)_1}\notag\\
					&\qquad\quad+\underbrace{\int_{0}^{t_M}\int_\Omega\varepsilon|\nabla \hat z_M|^{p-2}\nabla \hat z_M
						\cdot \nabla \partial_t\hat z_M+\nabla\hat z_M\cdot\nabla\partial_t\hat z_M\,\mathrm dx\mathrm ds}_{(\star\star)_2}.
				\label{eqn:preciseEI1}
				\end{align}
				The elementary inequalities
				\begin{align*}
					(|\nabla \hat z_M|^{p-2}\nabla \hat z_M-|\nabla z_M|^{p-2}\nabla
					z_M)\cdot\nabla\partial_t\hat z_M\leq 0\quad\text{ and }\quad
					(\nabla \hat z_M-\nabla z_M)\cdot\nabla\partial_t\hat z_M\leq 0
				\end{align*}
				and \eqref{eqn:discreteSolution4} tested with $\zeta:=-\partial_t\hat z_M(t)\tau$ lead to the estimate:
				\begin{equation*}
					\begin{split}
						&(\star\star)_1+(\star\star)_2\\
						&\qquad\leq-\int_{0}^{t_M}\int_\Omega
							-\alpha\partial_t\hat z_M+\beta|\partial_t\hat z_M|^2\,\mathrm dx\mathrm ds\\
						&\qquad\quad+\underbrace{\int_{0}^{t_M}\int_\Omega
							(W_{,z}^\mathrm{el}(e(u_M^-+b_M-b_M^-),c_M,\hat z_M)-W_{,z}^\mathrm{el}(e(u_M),c_M,z_M))
							\partial_t\hat z_M\,\mathrm dx\mathrm ds}_{=:\kappa_M^3}.
					\end{split}
				\end{equation*}
				Furthermore,
				\begin{equation*}
					\begin{split}
						(\star)_1\leq{}&\int_{0}^{t_M}\int_\Omega
							W_{,c}^\mathrm{el}(e(u_M),c_M,z_M)\cdot\partial_t \hat c_M\,\mathrm dx\mathrm ds\\
						&+\underbrace{\int_{0}^{t_M}\int_\Omega
							(W_{,c}^\mathrm{el}(e(u_M^-+b_M-b_M^-),\hat c_M,z_M^-)-W_{,c}^\mathrm{el}(e(u_M),c_M,z_M))\cdot
							\partial_t\hat c_M\,\mathrm dx\mathrm ds}_{=:\kappa_M^1}.
					\end{split}
				\end{equation*}
				Using the elementary estimate
				$\mathbf\Gamma(\nabla \hat c_M-\nabla
                                c_M):\nabla\partial_t\hat c_M\leq 0$ gives
				\begin{equation*}
					\begin{split}
						(\star)_2\leq{}&
							\int_{0}^{t_M}\int_\Omega
							\mathbf\Gamma\nabla c_M:\nabla \partial_t\hat c_M
							+W_{,c}^\mathrm{ch,pol}(c_M)\cdot\partial_t\hat c_M\,\mathrm dx\mathrm ds\\
						&+\underbrace{\int_{0}^{t_M}\int_\Omega
							(W_{,c}^\mathrm{ch,pol}(\hat c_M)-W_{,c}^\mathrm{ch,pol}(c_M))\cdot\partial_t\hat c_M\,\mathrm dx\mathrm ds
							}_{=:\kappa_M^2}.
					\end{split}
				\end{equation*}
				Hence, applying equations \eqref{eqn:discreteSolution2} with $\zeta=\partial_t \hat c_M(t)$ and
				\eqref{eqn:discreteSolution1} with $\zeta=w_M(t)$ by noticing $\mathbb P\partial_t\hat c_M(t)=\partial_t\hat c_M(t)$ shows
				\begin{equation*}
					\begin{split}
						(\star)_1+(\star)_2
							\leq -\int_{0}^{t_M}\langle\mathcal Sw_M(s),w_M(s)\rangle\,\mathrm ds
							-\int_{0}^{t_M}\int_\Omega \varepsilon|\partial_t\hat c_M|^2\,\mathrm dx\mathrm ds
							+\kappa_M^1+\kappa_M^2.
					\end{split}
				\end{equation*}
				Lebesgue's generalized convergence theorem, growth conditions \eqref{eqn:growthEst5}-\eqref{eqn:growthEst7}
				and Lemma \ref{lemma:convergenceProperties}
				show $\kappa_M:=\kappa_M^1+\kappa_M^2+\kappa_M^3\rightarrow 0$ as $M\rightarrow\infty$.
				We would like to emphasize that we need the boundedness of $\nabla u_M$ in $L^4(\Omega_T;\mathbb R^{n\times n})$
				and the boundedness of $\partial_t \hat{c}_M$ and $\partial_t \hat{z}_M$ in $L^2(\Omega_T)$ with respect to $M$.\ep
			\end{proof}

		\item[\textsc{4.}]\textsc{Step: passing to $M\rightarrow \infty$.}
			Using Lemma \ref{lemma:weakConvergenceProperties}, Lemma \ref{lemma:convergenceProperties} and \eqref{eqn:discreteSolution1}, \eqref{eqn:discreteSolution2}
			and \eqref{eqn:discreteSolution3} we establish (ii), (iii) and (iv) of Definition \ref{def:weakSolutionRegularized}.
			Moreover, Lemma \ref{lemma:preciseEnergyEstimate} implies
			\begin{align*}
				&\mathcal E(u_M(t),c_M(t),z_M(t))
					+\int_{\Omega_t}-\alpha\partial_t\hat z_M+\beta|\partial_t\hat z_M|^2
					+\varepsilon|\partial_t\hat c_M|^2\,\mathrm dx\mathrm ds\\
				&+\int_0^{t}\langle\mathcal Sw_M(s),w_M(s)\rangle\,\mathrm ds-\mathcal E(u^0,c^0,z^0)\\
				&\qquad\qquad\leq
					\int_{0}^{t_M}\int_{\Omega}W_{,e}^\mathrm{el}(e(u_M^{-}+b-b_M^{-}),c_M^{-},
					z_M):e(\partial_t b)\,\mathrm dx\mathrm ds\\
				&\qquad\qquad\quad
					+\varepsilon\int_{0}^{t_M}\int_{\Omega}|\nabla u_M^-+\nabla b
					-\nabla b_M^-|^2\nabla(u_M^-+b-b_M^-):\nabla\partial_t b\,\mathrm dx\mathrm ds+\kappa_M.
			\end{align*}
			The energy estimate (vi) from Definition \ref{def:weakSolutionRegularized} follows from above by using the known convergence properties
			and weakly semi-continuity arguments.
			
			It remains to show (v) of Definition \ref{def:weakSolutionRegularized}.
			To proceed, we cite the following lemma from \cite{HeiKraus} which provides a tool to drop a restriction on the space of test-functions
			for a variational inequality of a specific form.
			\begin{lemma}[{\cite[Lemma 5.3]{HeiKraus}}]
			\label{lemma:preciseLowerBound}
				Let $p>n$ and $f\in L^{p/(p-1)}(\Omega;\mathbb R^n)$, $g\in L^1(\Omega)$, $z\in W_+^{1,p}(\Omega)$
				with $z\geq 0$, $f\cdot\nabla z\geq 0$ and $\{f=0\}\supseteq\{z=0\}$ a.e.. Furthermore, we assume that
				\begin{align*}
					\int_\Omega f\cdot\nabla\zeta+g\zeta\,\mathrm dx\geq 0\quad
						\text{for all }\zeta\in W_-^{1,p}(\Omega)\text{ with }\{\zeta=0\}\supseteq \{z=0\}.
				\end{align*}
				Then
				\begin{align*}
					\int_\Omega f\cdot\nabla\zeta+g\zeta\,\mathrm dx
						\geq\int_{\{z=0\}}[g]^+ \zeta\,\mathrm dx\quad
						\text{for all }\zeta\in W_-^{1,p}(\Omega).
				\end{align*}
			\end{lemma}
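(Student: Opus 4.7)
The strategy is to reduce the general case to the hypothesized one by multiplying an arbitrary $\zeta \in W_-^{1,p}(\Omega)$ by a cutoff depending on $z$ that vanishes in a neighborhood of $\{z=0\}$, use the sign condition $f \cdot \nabla z \geq 0$ to absorb the extra chain-rule term, and pass to the limit exploiting the assumption $\{f=0\} \supseteq \{z=0\}$.

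Concretely, I would fix a smooth nondecreasing family $\eta_\delta : \R \to [0,1]$ with $\eta_\delta \equiv 0$ on $(-\infty,\delta/2]$ and $\eta_\delta \equiv 1$ on $[\delta,\infty)$. Since $p > n$, Morrey's embedding gives $z \in C(\ol\Omega)$, so $\{z < \delta/2\}$ is an open set containing $\{z=0\}$, and the test function $\tilde\zeta_\delta := \zeta\,\eta_\delta(z)$ belongs to $W_-^{1,p}(\Omega)$ with $\{\tilde\zeta_\delta=0\} \supseteq \{z=0\}$. Plugging $\tilde\zeta_\delta$ into the hypothesis and expanding via the chain rule yields
$$0 \leq \int_\Omega \eta_\delta(z)\,f\cdot\nabla\zeta\,\mathrm dx + \int_\Omega \zeta\,\eta_\delta'(z)\,f\cdot\nabla z\,\mathrm dx + \int_\Omega \eta_\delta(z)\,g\zeta\,\mathrm dx.$$
The middle integrand is pointwise non-positive because $\zeta \leq 0$, $\eta_\delta' \geq 0$, and $f\cdot\nabla z \geq 0$, so it can be dropped while preserving the inequality.

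To close, I would let $\delta \searrow 0$ by dominated convergence. Pointwise $\eta_\delta(z) \to \chi_{\{z>0\}}$; since $f = 0$ on $\{z=0\}$, the first term converges to $\int_\Omega f\cdot\nabla\zeta\,\mathrm dx$, with integrable majorant $|f||\nabla\zeta|$ via H\"older with exponents $p/(p-1)$ and $p$, while the third converges to $\int_\Omega g\zeta\,\mathrm dx - \int_{\{z=0\}} g\zeta\,\mathrm dx$, with majorant $\|\zeta\|_{L^\infty}|g|$ since $\zeta \in L^\infty(\Omega)$ by Sobolev embedding. Combining these limits and then using $g\zeta \geq [g]^+\zeta$ on $\{z=0\}$, which follows from $\zeta \leq 0$ together with $g = [g]^+ - [g]^-$, yields the claimed inequality. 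The main delicate point I anticipate is not any single calculation but the careful handling of the a.e.-defined sets $\{f=0\}$ and $\{z=0\}$, including verification that $\tilde\zeta_\delta$ is genuinely an admissible test function with the right zero-set inclusion; the continuity $z \in C(\ol\Omega)$ afforded by $p > n$ is essential here.
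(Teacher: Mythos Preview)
The paper does not supply its own proof of this lemma: it is quoted verbatim from \cite[Lemma~5.3]{HeiKraus} and simply invoked later in the proof of Lemma~\ref{lemma:VI}. So there is no in-paper argument to compare against.

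Your proof is correct and is in fact the natural one. The only points worth verifying are the ones you already flag: (a) $\tilde\zeta_\delta=\zeta\,\eta_\delta(z)\in W_-^{1,p}(\Omega)$ with $\{\tilde\zeta_\delta=0\}\supseteq\{z=0\}$, which follows because $p>n$ gives $z,\zeta\in C(\ol\Omega)\cap L^\infty(\Omega)$, $\eta_\delta$ is Lipschitz, and the product/chain rules apply in $W^{1,p}\cap L^\infty$; (b) the middle term $\int_\Omega\zeta\,\eta_\delta'(z)\,f\cdot\nabla z\,\mathrm dx$ is well-defined (H\"older with $f\in L^{p/(p-1)}$, $\nabla z\in L^p$, the remaining factors bounded) and nonpositive, so removing it only increases the right-hand side; and (c) dominated convergence applies with the majorants you name. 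The final step $g\zeta\geq[g]^+\zeta$ on $\{z=0\}$ is immediate from $-[g]^-\zeta\geq 0$. Nothing is missing.
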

			
			We are now able to prove the remaining property.
			\begin{lemma}
			\label{lemma:VI}
				We have
				\begin{align}
						&\int_{\Omega}(\varepsilon|\nabla z(t)|^{p-2}+1)\nabla z(t)\cdot\nabla\zeta
							+(W_{,z}^\mathrm{el}(e(u(t)),c(t),z(t))
							-\alpha+\beta(\partial_t z(t)))\zeta\,\mathrm dx\notag\\
						&\qquad\qquad\geq-\langle r(t),\zeta\rangle,
					\label{eqn:VIproof}
				\end{align}
				for all $\zeta\in W_-^{1,p}(\Omega)$ and for a.e. $t\in[0,T]$, where $r(t)\in L^1(\Omega)\subseteq (W^{1,p}(\Omega))^\star$ is given by
				\begin{align}
					r(t):=-\chi_{\{z(t)=0\}}[W_{,z}^\mathrm{el}(e(u(t)),c(t),z(t))]^+.
					\label{eqn:r_proof}
				\end{align}
			\end{lemma}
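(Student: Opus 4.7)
My plan is to pass to the limit $M\to\infty$ in the time-discrete variational inequality \eqref{eqn:discreteSolution4}. Because of the admissibility constraint $0\le \zeta+z_M(t)\le z_M^-(t)$, any nonpositive test function eligible in the discrete VI must vanish on $\{z_M(t)=0\}$. I therefore first derive the VI in a \emph{restricted} form, valid for $\tilde\zeta\in W_-^{1,p}(\Omega)$ satisfying $\{\tilde\zeta=0\}\supseteq\{z(t)=0\}$, and afterwards invoke Lemma \ref{lemma:preciseLowerBound} to remove the nullification requirement and to identify the specific $r(t)$ from \eqref{eqn:r_proof}.

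To construct admissible discrete test functions from such a $\tilde\zeta$, I apply Lemma \ref{lemma:approximation} with $f=z$, $f_M=z_M$ and target function $-\tilde\zeta\in W_+^{1,p}(\Omega)$. This yields $\hat\zeta_M\to-\tilde\zeta$ in $L^p(0,T;W^{1,p}(\Omega))$ together with constants $\nu_{M,t}>0$ such that $\nu_{M,t}\hat\zeta_M(t)\le z_M(t)$ a.e. The choice $\zeta:=-\nu_{M,t}\hat\zeta_M(t)$ then satisfies $0\le \zeta+z_M(t)\le z_M^-(t)$ and is admissible in \eqref{eqn:discreteSolution4}. Substituting and dividing by $\nu_{M,t}>0$ gives, for each $t$,
\begin{equation*}
\int_\Omega (\varepsilon|\nabla z_M|^{p-2}+1)\nabla z_M\cdot\nabla\hat\zeta_M+\bigl(W_{,z}^\mathrm{el}(e(u_M),c_M,z_M)-\alpha+\beta\partial_t\hat z_M\bigr)\hat\zeta_M\,\mathrm dx\le 0.
\end{equation*}
Integrating in $t$ and passing to the limit using the strong convergences of $u_M,c_M,z_M$ and $\nabla z_M$ from Lemma \ref{lemma:convergenceProperties} (the last of which handles the nonlinear term $|\nabla z_M|^{p-2}\nabla z_M$), the growth condition \eqref{eqn:growthEst6} together with Lebesgue's generalized convergence theorem (for the elastic term), and the weak convergence $\partial_t\hat z_M\weaklim\partial_t z$ in $L^2(\Omega_T)$ paired with the strong $L^2$-convergence of $\hat\zeta_M$ (for the viscous term), yields the limiting inequality. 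A standard localization in time (testing with $\chi_E(t)\tilde\zeta$ for arbitrary measurable $E\subseteq(0,T)$) upgrades it to the pointwise statement that for a.e. $t$ and every $\tilde\zeta\in W_-^{1,p}(\Omega)$ with $\{\tilde\zeta=0\}\supseteq\{z(t)=0\}$,
\begin{equation*}
\int_\Omega(\varepsilon|\nabla z(t)|^{p-2}+1)\nabla z(t)\cdot\nabla\tilde\zeta+\bigl(W_{,z}^\mathrm{el}(e(u(t)),c(t),z(t))-\alpha+\beta\partial_t z(t)\bigr)\tilde\zeta\,\mathrm dx\ge 0.
\end{equation*}

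Finally, fixing such a $t$, I apply Lemma \ref{lemma:preciseLowerBound} with $f:=(\varepsilon|\nabla z(t)|^{p-2}+1)\nabla z(t)$ and $g:=W_{,z}^\mathrm{el}(e(u(t)),c(t),z(t))-\alpha+\beta\partial_t z(t)$. Its hypotheses are readily verified: $f\cdot\nabla z(t)=(\varepsilon|\nabla z(t)|^{p-2}+1)|\nabla z(t)|^2\ge 0$, and $\{f=0\}=\{\nabla z(t)=0\}\supseteq\{z(t)=0\}$ because the continuous function $z(t)\in W^{1,p}(\Omega)$ (with $p>n$) has vanishing gradient a.e. on the level set where it attains its minimum. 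The conclusion of that lemma yields exactly \eqref{eqn:VIproof} with $r(t)$ as in \eqref{eqn:r_proof}. The main obstacle in this argument is the passage to the limit in the nonlinear flux $(\varepsilon|\nabla z_M|^{p-2}+1)\nabla z_M\cdot\nabla\hat\zeta_M$; this is unlocked by the strong $L^p$-convergence $\nabla z_M\to\nabla z$ from Lemma \ref{lemma:convergenceProperties}(iii), which in turn was obtained by exploiting the discrete VI through a uniform-convexity estimate.
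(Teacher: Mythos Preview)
Your proof follows essentially the same route as the paper's: approximate a restricted nonpositive test function via Lemma~\ref{lemma:approximation}, pass to the limit in the integrated discrete inequality using the strong convergences of Lemma~\ref{lemma:convergenceProperties}, localize in time, and then invoke Lemma~\ref{lemma:preciseLowerBound} to drop the nullification constraint. One small step is elided at the very end: Lemma~\ref{lemma:preciseLowerBound} applied with your $g$ yields the lower bound $\int_{\{z(t)=0\}}[g]^+\zeta$, not directly $-\langle r(t),\zeta\rangle$ with the $r(t)$ of \eqref{eqn:r_proof}; the paper bridges this with the observation that $-\alpha+\beta\partial_t z(t)\le 0$ gives $[g]^+\le[W_{,z}^\mathrm{el}]^+$ and hence $[g]^+\zeta\ge[W_{,z}^\mathrm{el}]^+\zeta$ for $\zeta\le 0$.
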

			\begin{proof}
				First of all, we take any test-function $\zeta\in L^p(0,T;W_-^{1,p}(\Omega))$ with\linebreak
				$\{\zeta=0\}\supseteq\{z=0\}$. Lemma \ref{lemma:approximation} gives a sequence
				$\{\zeta_M\}\subseteq L^p(0,T;W_-^{1,p}(\Omega))$ with
				$\zeta_M\rightarrow\zeta$ in $L^p(0,T;W^{1,p}(\Omega))$ and $0\geq \nu \zeta_M(t)\geq -z_M(t)$
				where $\nu$ depends on $M$ and $t$.
				Therefore \eqref{eqn:discreteSolution4} holds for $\zeta=\zeta_M(t)$.
				Integration from $0$ to $T$ and passing to $M\rightarrow\infty$ gives
				\begin{align*}
					&\int_{\Omega_T} (\varepsilon|\nabla z|^{p-2}+1)\nabla z\cdot\nabla\zeta
						+(W_{,z}^\mathrm{el}(e(u),c,z)
						-\alpha+\beta(\partial_t z))\zeta\,\mathrm dx\mathrm dt\geq 0.
				\end{align*}
				In other words,
				\begin{align*}
					&\int_{\Omega} (\varepsilon|\nabla z(t)|^{p-2}+1)\nabla z(t)\cdot\nabla\zeta
						+W_{,z}^\mathrm{el}(e(u(t)),c(t),z(t))\zeta\,\mathrm dx\\
					&\qquad\qquad+\int_\Omega(-\alpha+\beta(\partial_t  z(t)))\zeta\,\mathrm dx\geq 0
				\end{align*}
				holds for every $\zeta\in W_-^{1,p}(\Omega)$ with $\{\zeta=0\}\supseteq\{z(t)=0\}$ and a.e. $t\in[0,T]$.
				To finish the proof, we need to extend the variational inequality to the whole space
				$W_-^{1,p}(\Omega)$.
				
				Setting $f=(\varepsilon|\nabla z(t)|^{p-2}+1)\nabla z(t)$ and
				$g=W_{,z}^\mathrm{el}(e(u(t)),c(t),z(t))-\alpha+\beta(\partial_t z(t))$, Lemma
				\ref{lemma:preciseLowerBound} shows for every $\zeta\in W_-^{1,p}(\Omega)$
				\begin{align*}
					&\int_{\Omega} (\varepsilon|\nabla z(t)|^{p-2}+1)\nabla z(t)\cdot\nabla\zeta
						+(W_{,z}^\mathrm{el}(e(u(t)),c(t),z(t))
						-\alpha+\beta(\partial_t z(t)))\zeta\,\mathrm dx\\
					&\qquad\qquad\geq \int_{\{z(t)=0\}}[W_{,z}^\mathrm{el}(e(u(t)),c(t),z(t))-\alpha+\beta(\partial_t z(t))]^+
						\zeta\,\mathrm dx\\
					&\qquad\qquad\geq \int_{\{z(t)=0\}}[W_{,z}^\mathrm{el}(e(u(t)),c(t),z(t))]^+\zeta\,\mathrm dx.
				\end{align*}
				Now, variational inequality \eqref{eqn:VIproof} follows by setting
				\begin{align*}
					r(t):=-\chi_{\{z(t)=0\}}[W_{,z}^\mathrm{el}(e(u(t)),c(t),z(t))]^+.
				\end{align*}
				\ep
			\end{proof}
			\begin{remark}
				Lemma \ref{lemma:VI} gives more information than (v) from Definition \ref{def:weakSolutionRegularized}. It provides a special choice for
				$r(t)$ given by \eqref{eqn:r_proof}.
			\end{remark}
	\end{itemize}

\section{Existence of weak solutions of \eqref{eqn:unifyingModelClassical} - polynomial case}
	In this chapter, we show that an appropriate subsequence of
	the regularized solutions $q_\varepsilon$ for $\varepsilon\in(0,1]$ of Definition \ref{def:weakSolutionRegularized} converges in ``some sense''
	to $q$ which satisfies the limit equations given in Definition \ref{def:weakSolutionLimit}.
	Besides that the initial damage profile $z^0$ in this chapter is in $H^1(\Omega)$.
	We approximate $z^0\in H^1(\Omega)$ by a sequence $\{z_\varepsilon^0\}$ in $W^{1,p}(\Omega)$ such that $z_\varepsilon^0\rightarrow z^0$
	in $H^1(\Omega)$ as $\varepsilon\searrow 0$.
	Using the energy inequality and Gronwall's inequality, we establish again the following energy estimate.
	\begin{lemma}
		We have
		\begin{align*}
			&\mathcal E_\varepsilon(u_\varepsilon(t),c_\varepsilon(t),z_\varepsilon(t))
				+\int_0^t\int_\Omega-\alpha\partial_t z_\varepsilon+\beta|\partial_t z_\varepsilon|^2
				+\varepsilon|\partial_t c_\varepsilon|^2\,\mathrm dx\,\mathrm ds
				+\int_0^t\langle\mathcal Sw_\varepsilon(s),w_\varepsilon(s)\rangle\,\mathrm ds\\
			&\qquad\qquad\leq C(\mathcal E_\varepsilon(u_\varepsilon^0,c^0,z_\varepsilon^0)+1)
		\end{align*}
		for a.e. $t\in [0,T]$ and every $\varepsilon\in(0,1]$.
	\end{lemma}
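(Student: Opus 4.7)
The lemma is a Gronwall-type consequence of the energy inequality (vi) in Definition \ref{def:weakSolutionRegularized}. I would start from that inequality, rewrite its left-hand side into the claimed form, bound each term on the right-hand side linearly by $\mathcal E_\varepsilon(u_\varepsilon(s),c_\varepsilon(s),z_\varepsilon(s))+1$ with an $L^1(0,T)$-weight, and close with Gronwall's lemma.

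The first step is cosmetic: since $\partial_t z_\varepsilon\le 0$ a.e., the fundamental theorem gives
$\int_\Omega \alpha(z_\varepsilon^0-z_\varepsilon(t))\,dx=\int_0^t\int_\Omega(-\alpha\partial_t z_\varepsilon)\,dx\,ds$, which transforms the LHS of (vi) into the LHS of the claimed estimate. The real work is bounding the two terms on the RHS of (vi). For the elastic work term, I would invoke \eqref{eqn:growthEst4} with $e_1=e(u_\varepsilon-b)$, $e_2=e(b)$, and then \eqref{eqn:growthEst2}, to get
$$|W_{,e}^\mathrm{el}(e(u_\varepsilon),c_\varepsilon,z_\varepsilon)|\le C\bigl(W^\mathrm{el}(e(u_\varepsilon-b),c_\varepsilon,z_\varepsilon)+|e(b)|+1\bigr)\le C\bigl(|e(u_\varepsilon)|^2+|e(b)|^2+|c_\varepsilon|^2+1\bigr).$$
Combined with $b\in W^{1,1}(0,T;W^{1,\infty}(\Omega;\mathbb R^n))$ and the fact that $\|e(u_\varepsilon(s))\|_{L^2}^2$ and $\|c_\varepsilon(s)\|_{L^2}^2$ are controlled by $\mathcal E_\varepsilon(u_\varepsilon(s),c_\varepsilon(s),z_\varepsilon(s))$ up to additive constants (the former via the uniform convexity \eqref{eqn:growthEst3} applied between $e_1=e(u_\varepsilon)$ and $e_2=0$, together with \eqref{eqn:growthEst4}, \eqref{eqn:growthEst2} at $e=0$; the latter via the $\frac12\mathbf\Gamma\nabla c:\nabla c$ term in $\mathcal E_\varepsilon$, Poincar\'e and $W^{\mathrm{ch,pol}}\ge -C$), this gives
$$\Bigl|\int_{\Omega_t}W_{,e}^\mathrm{el}(e(u_\varepsilon),c_\varepsilon,z_\varepsilon):e(\partial_t b)\,dx\,ds\Bigr|\le C\int_0^t\|\partial_t b(s)\|_{W^{1,\infty}}\bigl(\mathcal E_\varepsilon(u_\varepsilon(s),c_\varepsilon(s),z_\varepsilon(s))+1\bigr)\,ds.$$
For the $\varepsilon$-regularization term, use the pointwise bound $|\nabla u_\varepsilon|^3\le|\nabla u_\varepsilon|^4+1$ and the fact that $\frac{\varepsilon}{4}|\nabla u_\varepsilon|^4$ sits inside $\mathcal E_\varepsilon$ to obtain a completely analogous estimate
$$\varepsilon\int_{\Omega_t}|\nabla u_\varepsilon|^2\nabla u_\varepsilon:\nabla\partial_t b\,dx\,ds\le C\int_0^t\|\partial_t b(s)\|_{W^{1,\infty}}\bigl(\mathcal E_\varepsilon(u_\varepsilon(s),c_\varepsilon(s),z_\varepsilon(s))+1\bigr)\,ds,$$
uniformly in $\varepsilon\in(0,1]$.

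Plugging both bounds into (vi), dropping the non-negative dissipation integrals on the LHS for the Gronwall step, and using that $s\mapsto\|\partial_t b(s)\|_{W^{1,\infty}}$ lies in $L^1(0,T)$, Gronwall's inequality applied to $t\mapsto\mathcal E_\varepsilon(u_\varepsilon(t),c_\varepsilon(t),z_\varepsilon(t))+1$ yields a pointwise bound by $C(\mathcal E_\varepsilon(u_\varepsilon^0,c^0,z_\varepsilon^0)+1)$; reinserting this into (vi) then controls the dissipation integrals as well, giving the claim. The main technical obstacle is the very first bound: one must extract $L^2$-control of $e(u_\varepsilon)$ from the bulk elastic energy using \eqref{eqn:growthEst3}--\eqref{eqn:growthEst4} (since \eqref{eqn:growthEst1}--\eqref{eqn:growthEst6} only provide coercivity indirectly), and one must arrange Young's inequality so that the quartic $\varepsilon|\nabla u_\varepsilon|^4$ piece is absorbed without losing the $\varepsilon$-prefactor needed later.
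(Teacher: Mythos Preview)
Your overall plan---start from the energy inequality (vi), bound the right-hand side linearly in $\mathcal E_\varepsilon(\cdot)+1$ with an $L^1$-in-time weight, then apply Gronwall---is exactly the paper's argument. The treatment of the $\varepsilon|\nabla u_\varepsilon|^3$ term and the reinsertion step are fine.

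There is, however, a genuine issue in your handling of the elastic work term. After invoking \eqref{eqn:growthEst4} you pass through \eqref{eqn:growthEst2} to the pointwise bound $|W_{,e}^\mathrm{el}|\le C(|e(u_\varepsilon)|^2+|c_\varepsilon|^2+1)$ and then claim that $\|e(u_\varepsilon(s))\|_{L^2}^2\le C(\mathcal E_\varepsilon(s)+1)$ follows from \eqref{eqn:growthEst3}, \eqref{eqn:growthEst4}, \eqref{eqn:growthEst2}. But the convexity argument you sketch actually yields
\[
\tfrac{\eta}{4}|e|^2\le W^\mathrm{el}(e,c,z)+C(|c|^4+1),
\]
because $|W_{,e}^\mathrm{el}(0,c,z)|\le C(|c|^2+1)$ and Young's inequality on the cross term produces $(|c|^2+1)^2$. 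The extra $\|c_\varepsilon\|_{L^4}^4$ is only \emph{quadratically} controlled by $\mathcal E_\varepsilon+1$ (via the gradient term and Sobolev), so the resulting Gronwall inequality is nonlinear and does not close to a global bound.

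The remedy is to skip the detour through \eqref{eqn:growthEst2} entirely: use \eqref{eqn:growthEst4} with $e_1=e(u_\varepsilon)$ and $e_2=0$ to obtain
\[
|W_{,e}^\mathrm{el}(e(u_\varepsilon),c_\varepsilon,z_\varepsilon)|\le C\bigl(W^\mathrm{el}(e(u_\varepsilon),c_\varepsilon,z_\varepsilon)+1\bigr),
\]
and note that $\int_\Omega W^\mathrm{el}(e(u_\varepsilon),c_\varepsilon,z_\varepsilon)\,\mathrm dx\le \mathcal E_\varepsilon+C$ because every remaining summand of $\tilde{\mathcal E}_\varepsilon$ (the gradient terms and $W^\mathrm{ch,pol}\ge -C$) is bounded below. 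This gives the linear estimate
\[
\Bigl|\int_{\Omega_t}W_{,e}^\mathrm{el}(e(u_\varepsilon),c_\varepsilon,z_\varepsilon):e(\partial_t b)\,\mathrm dx\,\mathrm ds\Bigr|\le C\int_0^t\|\partial_t b(s)\|_{W^{1,\infty}}\bigl(\mathcal E_\varepsilon(s)+1\bigr)\,\mathrm ds
\]
directly, and the rest of your argument goes through unchanged.
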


	Since $\mathcal E_\varepsilon(u_\varepsilon^0,c^0,z_\varepsilon^0)
	\leq\mathcal E_\varepsilon(u_1^0,c^0,z_\varepsilon^0)\leq \mathcal E_1(u_1^0,c^0,z_\varepsilon^0)$, the left hand side is
	also uniformly bounded with respect to a.e. $t\in [0,T]$ and every $\varepsilon\in(0,1]$.
	By using standard compactness theorems and uniform convexity properties of $W^\mathrm{el}$ (see \eqref{eqn:growthEst3}),
	we obtain the following convergence properties (cf. \cite{HeiKraus}).
	\begin{lemma}[Convergence properties of $q_\varepsilon$]
	\label{lemma:convergenceProperties2}
		There exists a subsequence $\{\varepsilon_k\}$ with $\varepsilon_k\searrow 0$ as $k\rightarrow\infty$ and
		an element $q=(u,c,w,z)$ satisfying (i) of Definition \ref{def:weakSolutionLimit} such that for a.e. $t\in[0,T]$
		
		\begin{tabular}{ll}
			\begin{minipage}{20em}
				\begin{enumerate}
					\renewcommand{\labelenumi}{(\roman{enumi})}
					\item
						$u_{\varepsilon_k}\rightarrow u$ in $L^2(0,T;H^1(\Omega;\mathbb R^n))$,\\
						$\sqrt[3]{\varepsilon_k} \nabla u_{\varepsilon_k}\rightarrow 0$ in $L^\infty(0,T;L^4(\Omega;\mathbb R^n))$,\\
						$u_{\varepsilon_k}(t)\rightarrow u(t)$ in $H^{1}(\Omega;\mathbb R^n)$,\\
						$u_{\varepsilon_k}\rightarrow u$ a.e. in $\Omega_T$,\\
						$u_{\varepsilon_k}^0\rightarrow u^0$ in $H^{1}(\Omega;\mathbb R^n)$,\\
						$\sqrt[3]{\varepsilon_k} \nabla u_{\varepsilon_k}^0\rightarrow 0$ in $L^4(\Omega;\mathbb R^n)$,
					\item
						$c_{\varepsilon_k}\stackrel{\star}{\rightharpoonup} c$ in $L^\infty(0,T;H^1(\Omega;\mathbb R^N))$,\\
						$\varepsilon_k \partial_t c_{\varepsilon_k}\rightarrow 0$ in $L^2(\Omega_T;\mathbb R^N)$,\\
						$c_{\varepsilon_k}(t)\rightharpoonup c(t)$ in $H^{1}(\Omega;\mathbb R^N)$,\\
						$c_{\varepsilon_k}\rightarrow c$ a.e. in $\Omega_T$,\\
				\end{enumerate}
			\end{minipage}
			&
			\begin{minipage}{21em}
				\begin{enumerate}
					\renewcommand{\labelenumi}{(\roman{enumi})}
					\item[(iii)]
						$z_{\varepsilon_k}\stackrel{\star}{\rightharpoonup} z$ in $L^\infty(0,T;H^1(\Omega))$,\\
						$\sqrt[p-1]{\varepsilon_k} \nabla z_{\varepsilon_k}\rightarrow 0$ in $L^\infty(0,T;L^p(\Omega))$,\\
						$z_{\varepsilon_k}(t)\rightharpoonup z(t)$ in $H^1(\Omega)$,\\
						$z_{\varepsilon_k}\rightarrow z$ a.e. in $\Omega_T$,\\
						$z_{\varepsilon_k}\rightharpoonup z$ in $H^1(0,T;L^2(\Omega))$
						\\\\\\\\\\\\\\
				\end{enumerate}
			\end{minipage}
		\end{tabular}
		
		as $k\rightarrow\infty$.
		We additionally obtain for Cahn-Hilliard systems
		$$w_{\varepsilon_k}\rightharpoonup w\text{ in }L^2(0,T;H^1(\Omega;\mathbb R^N))$$
		and for Allen-Cahn systems
		\begin{align*}
			&w_{\varepsilon_k}\rightharpoonup w\text{ in }L^2(\Omega_T;\mathbb R^N),\\
			&c_{\varepsilon_k}\rightharpoonup c\text{ in }H^1(0,T;L^2(\Omega;\mathbb R^N))
		\end{align*}
		as $k\rightarrow\infty$.
	\end{lemma}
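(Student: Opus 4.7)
My plan is to read off uniform $\varepsilon$-independent a priori bounds from the energy estimate stated just above the lemma, extract weak and weak-$\star$ limits via Banach--Alaoglu, obtain a.e.\ convergences by Aubin--Lions compactness, and finally upgrade the weak $H^1$-convergence of $u_\varepsilon$ to a strong one using the uniform monotonicity \eqref{eqn:growthEst3}. The main obstacle is the last step, since the force balance \eqref{eqn:weaksolution3} is written on $W^{1,4}_D$ while the expected limit $u$ only lies in $H^1_D+b$; a density argument combined with a monotonicity trick will be required.

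First I would use the coercivity of $\mathcal E_\varepsilon$ together with Korn's inequality and \eqref{eqn:growthEst3} to deduce from the cited estimate, uniformly in $\varepsilon\in(0,1]$,
\begin{align*}
&\|u_\varepsilon\|_{L^\infty(0,T;H^1)}+\|c_\varepsilon\|_{L^\infty(0,T;H^1)}+\|z_\varepsilon\|_{L^\infty(0,T;H^1)}\le C,\\
&\varepsilon^{1/4}\|\nabla u_\varepsilon\|_{L^\infty(0,T;L^4)}+\varepsilon^{1/p}\|\nabla z_\varepsilon\|_{L^\infty(0,T;L^p)}\le C,\\
&\|\partial_t z_\varepsilon\|_{L^2(\Omega_T)}+\sqrt{\varepsilon}\,\|\partial_t c_\varepsilon\|_{L^2(\Omega_T)}+\int_0^T\langle\mathcal{S}w_\varepsilon,w_\varepsilon\rangle\,\mathrm ds\le C.
\end{align*}
The bound on $\mathcal E_\varepsilon(u_\varepsilon^0,c^0,z_\varepsilon^0)$ needed here follows from $z_\varepsilon^0\to z^0$ in $H^1$, the polynomial growth conditions \eqref{eqn:growthEst2}, \eqref{eqn:growthEst7}, and the minimizing property of $u_\varepsilon^0$. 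From the last line $w_\varepsilon$ is uniformly bounded in $L^2(0,T;H^1(\Omega;\mathbb R^N))$ for Cahn--Hilliard (modulo the Lagrange multiplier associated with mass conservation) and in $L^2(\Omega_T;\mathbb R^N)$ for Allen--Cahn. Banach--Alaoglu now yields a diagonal subsequence $\varepsilon_k\searrow 0$ realizing all weak and weak-$\star$ convergences in (i)--(iii). Since $\varepsilon_k^{1/3}=\varepsilon_k^{1/12}\varepsilon_k^{1/4}$ and $\varepsilon_k^{1/(p-1)}=\varepsilon_k^{1/(p(p-1))}\varepsilon_k^{1/p}$, the statements $\sqrt[3]{\varepsilon_k}\nabla u_{\varepsilon_k}\to 0$ and $\sqrt[p-1]{\varepsilon_k}\nabla z_{\varepsilon_k}\to 0$ follow directly from the second line above.

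For the a.e.\ convergences I would apply Aubin--Lions with the compact embedding $H^1(\Omega)\hookrightarrow\hookrightarrow L^2(\Omega)$. For $z_\varepsilon$ the bounds $z_\varepsilon\in L^\infty(0,T;H^1)$ and $\partial_t z_\varepsilon\in L^2(\Omega_T)$ give compactness in $C([0,T];L^2)$, hence a subsequence with $z_{\varepsilon_k}(t)\rightharpoonup z(t)$ in $H^1$ for a.e.\ $t$ and $z_{\varepsilon_k}\to z$ a.e.\ in $\Omega_T$. For $c_\varepsilon$, one uses $\partial_t c_\varepsilon=-\mathcal S w_\varepsilon$, bounded in $L^2(0,T;(H^1)^\star)$ for Cahn--Hilliard and in $L^2(\Omega_T;\mathbb R^N)$ for Allen--Cahn (the latter giving the claimed $H^1(0,T;L^2)$-convergence directly); Aubin--Lions then yields $c_{\varepsilon_k}\to c$ a.e.\ along a further subsequence. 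The convergences of $u_{\varepsilon_k}^0$ and $\sqrt[3]{\varepsilon_k}\nabla u_{\varepsilon_k}^0$ follow by applying the same strategy to the static minimization problems that define them.

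The remaining hard part is the strong $H^1$-convergence of $u_{\varepsilon_k}$. I would approximate $u(t)$ by a sequence $v_j\in W^{1,4}_D+b(t)$ with $v_j\to u(t)$ in $H^1$ (available by density), test \eqref{eqn:weaksolution3} with $\zeta=u_{\varepsilon_k}(t)-v_j$, and combine with the monotonicity bound
\[
\eta\,|e(u_{\varepsilon_k})-e(u)|^2\le \bigl(W^{\mathrm{el}}_{,e}(e(u_{\varepsilon_k}),c_{\varepsilon_k},z_{\varepsilon_k})-W^{\mathrm{el}}_{,e}(e(u),c_{\varepsilon_k},z_{\varepsilon_k})\bigr):\bigl(e(u_{\varepsilon_k})-e(u)\bigr).
\]
The regularization contribution $\varepsilon_k\int|\nabla u_{\varepsilon_k}|^2\nabla u_{\varepsilon_k}:\nabla(u_{\varepsilon_k}-v_j)\,\mathrm dx$ is controlled by $\varepsilon_k^{3/4}\bigl(\varepsilon_k^{1/4}\|\nabla u_{\varepsilon_k}\|_{L^4}\bigr)^3\|\nabla(u_{\varepsilon_k}-v_j)\|_{L^4}$ and vanishes as $\varepsilon_k\searrow 0$. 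Together with the already-proven a.e.\ convergence of $c_{\varepsilon_k},z_{\varepsilon_k}$, continuity of $W^{\mathrm{el}}_{,e}$, and growth bounds \eqref{eqn:growthEst4}--\eqref{eqn:growthEst6} producing equi-integrable majorants, taking first $\varepsilon_k\to 0$ and then $j\to\infty$ yields $\|e(u_{\varepsilon_k})-e(u)\|_{L^2(\Omega)}\to 0$ for a.e.\ $t$; Korn's inequality and dominated convergence in $t$ upgrade this to $u_{\varepsilon_k}\to u$ in $L^2(0,T;H^1(\Omega;\mathbb R^n))$. The weak convergences of $w_{\varepsilon_k}$ come for free from the uniform $L^2$-bounds established in the first step.
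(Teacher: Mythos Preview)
Your overall strategy---a priori bounds from the energy inequality, weak compactness, Aubin--Lions for pointwise-in-time and a.e.\ convergence, and then the monotonicity trick based on \eqref{eqn:growthEst3} to upgrade $u_{\varepsilon_k}$ to strong $H^1$-convergence---is exactly what the paper invokes (it only cites ``standard compactness theorems and uniform convexity properties of $W^{\mathrm{el}}$'' and refers to \cite{HeiKraus}). So the plan is sound and in line with the paper.

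There is, however, a genuine gap in your treatment of the $4$-Laplace regularisation term. The arithmetic in your displayed bound is wrong: $\varepsilon_k^{3/4}(\varepsilon_k^{1/4})^3=\varepsilon_k^{3/2}\neq\varepsilon_k$, and the correct rewriting is
\[
\varepsilon_k\|\nabla u_{\varepsilon_k}\|_{L^4}^3\|\nabla(u_{\varepsilon_k}-v_j)\|_{L^4}
=\varepsilon_k^{1/4}\bigl(\varepsilon_k^{1/4}\|\nabla u_{\varepsilon_k}\|_{L^4}\bigr)^3\|\nabla(u_{\varepsilon_k}-v_j)\|_{L^4}.
\]
Even with this correction the conclusion does not follow: the only available bound on the last factor is $\|\nabla u_{\varepsilon_k}\|_{L^4}\le C\varepsilon_k^{-1/4}$, so the product is merely bounded, not vanishing. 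The fix is not to estimate this term in absolute value but to split it and use its sign. Testing \eqref{eqn:weaksolution3} with $\zeta=u_{\varepsilon_k}(t)-v_j$ gives
\[
\int_\Omega W_{,e}^{\mathrm{el}}(e(u_{\varepsilon_k}),c_{\varepsilon_k},z_{\varepsilon_k}):e(u_{\varepsilon_k}-v_j)\,\mathrm dx
=-\varepsilon_k\int_\Omega|\nabla u_{\varepsilon_k}|^4\,\mathrm dx
+\varepsilon_k\int_\Omega|\nabla u_{\varepsilon_k}|^2\nabla u_{\varepsilon_k}:\nabla v_j\,\mathrm dx.
\]
The first term on the right is $\le 0$, which is the correct sign for an upper bound in the monotonicity estimate, and the second term is bounded by $(\varepsilon_k^{1/3}\|\nabla u_{\varepsilon_k}\|_{L^4})^3\|\nabla v_j\|_{L^4}\to 0$ for fixed $j$. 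With this modification your $\limsup_{\varepsilon_k}$-then-$j\to\infty$ argument goes through.
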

	As before, we will omit the index $k$ in the subscripts below.
	\begin{remark}
		We would like to mention that the arguments in \cite[Lemma 6.14]{HeiKraus} cannot be adapted to prove strong convergence
		properties of $\nabla c_\varepsilon$ and $\nabla z_\varepsilon$ due to
		the more generous growth condition \eqref{eqn:growthEst5} as well as
		the use of Lemma \ref{lemma:approximation}
		where the compact embedding $W^{1,p}(\Omega)\hookrightarrow \C C^{0,\alpha}(\overline{\Omega})$ for $p>n$ with
		$\alpha>0$ and $\alpha<1-\frac np$ is exploited.
	\end{remark}
	We are now able to establish existence of weak solutions of \eqref{eqn:unifyingModelClassical}
	in the polynomial case.
	
	\begin{proof}[Proof of Theorem \ref{theorem:mainTheoremPoly}]
		Whenever we refer in the following to \eqref{eqn:weaksolution1}-\eqref{eqn:weaksolution5} the functions $u,c,w,z$ and $r$ are substituted by
		$u_\varepsilon,c_\varepsilon,w_\varepsilon,z_\varepsilon$ and $r_\varepsilon$.
		Moreover, Lemma \ref{lemma:convergenceProperties2} is used without mention in the following.
		\begin{enumerate}
			\renewcommand{\labelenumi}{(\roman{enumi})}
			\item Let $\zeta\in L^2(0,T;H^1(\Omega;\mathbb R^N))$ with $\partial_t\zeta\in L^2(\Omega_T;\mathbb R^N)$ and $\zeta(T)=0$.
				Integration from $t=0$ to $t=T$ of \eqref{eqn:weaksolution1} and integration by parts yield
				\begin{align*}
					\int_{\Omega_T}(c_\varepsilon-c^0)\cdot\partial_t\zeta\,\mathrm dx\mathrm ds
						=\int_0^T\langle\mathcal Sw_\varepsilon,\zeta\rangle\,\mathrm ds.
				\end{align*}
				Passing to $\varepsilon\searrow 0$ shows (ii) of Definition \ref{def:weakSolutionLimit}.
			\item
				Let $\zeta\in L^2(0,T;H^1(\Omega;\mathbb R^N))\cap L^\infty(\Omega_T;\mathbb R^N)$.
				Integration from $t=0$ to $t=T$ of \eqref{eqn:weaksolution2} and passing to $\varepsilon\searrow 0$ yield
				\begin{align*}
					\int_{\Omega_T}w\cdot\zeta\,\mathrm dx\mathrm ds
						=\int_{\Omega_T}\mathbb P\mathbf\Gamma\nabla c:\nabla\zeta+(\mathbb P W_{,c}^\mathrm{ch,pol}(c)
						+\mathbb P W_{,c}^\mathrm{el}(e(u),c,z))\cdot\zeta\,\mathrm dx\mathrm ds.
				\end{align*}
				Note that
				\begin{align*}
					\left|\int_{\Omega_T}\varepsilon\partial_t c_\varepsilon\cdot\zeta\,\mathrm dx\mathrm ds\right|
					\leq \varepsilon \|\partial_t c_\varepsilon\|_{L^2(\Omega_T;\mathbb R^N)}\|\zeta\|_{L^2(\Omega_T;\mathbb R^N)}\rightarrow 0
				\end{align*}
				as $\varepsilon\searrow 0$.
				This shows (iii) of Definition \ref{def:weakSolutionLimit} with $W_{,c}^\mathrm{ch}=W_{,c}^\mathrm{ch,pol}$.
			\item
				Let $\zeta\in W_D^{1,4}(\Omega;\mathbb R^n)$ be arbitrary.
				Passing to $\varepsilon\searrow 0$ in \eqref{eqn:weaksolution3} yields for a.e. $t\in[0,T]$
				\begin{align}
					\int_{\Omega} W_{,e}^\mathrm{el}(e(u(t)),c(t),z(t)):e(\zeta)\,\mathrm dx=0,
				\label{eqn:limitQuasistaticEqui}
				\end{align}
				by noticing
				\begin{align*}
					\left|\int_\Omega\varepsilon|\nabla u_\varepsilon(t)|^2 \nabla u_\varepsilon(t):\nabla \zeta\,\mathrm dx\right|
						\leq \varepsilon \|\nabla u_\varepsilon(t)\|_{L^4(\Omega)}^3\|\zeta\|_{L^4(\Omega)}\rightarrow 0.
				\end{align*}
				A density argument shows that \eqref{eqn:limitQuasistaticEqui} also holds for all
				$\zeta\in H_D^{1}(\Omega;\mathbb R^n)$.
				Therefore, (iv) of Definition \ref{def:weakSolutionLimit} is shown.
			\item
				The characteristic functions $\chi_{\{z_\varepsilon=0\}}$ are bounded in $L^\infty(\Omega_T)$ with respect to $\varepsilon\in(0,1]$.
				We select a subsequence such that $\chi_{\{z_{\varepsilon_k}=0\}}\stackrel{\star}{\rightharpoonup}\chi$
				in $L^\infty(\Omega_T)$ as $k\rightarrow\infty$. In the following, we will omit the index $k$ in the notation.
				Integrating \eqref{eqn:weaksolution4} from $t=0$ to $t=T$ and passing to $\varepsilon\searrow 0$ show
				\begin{align}
					&\int_{\Omega_T}\nabla z\cdot\nabla\zeta
						+(W_{,z}^\mathrm{el}(e(u),c,z)
						-\alpha+\beta(\partial_t z))\zeta\,\mathrm dx
						\geq\int_{\Omega_T} \chi[W_{,z}^\mathrm{el}(e(u),c,z)]^+\zeta\,\mathrm dx\mathrm ds
				\label{eqn:limitVI}
				\end{align}
				for all $\zeta\in L^p(0,T;W_-^{1,p}(\Omega))\cap L^\infty(\Omega_T)$.
				We also used the fact that
				\begin{align*}
					\left|\int_{\Omega_T}\varepsilon|\nabla z_\varepsilon|^{p-2}\nabla z_\varepsilon\cdot\nabla\zeta\,\mathrm dx\mathrm ds\right|
					\leq \varepsilon \|\nabla z_\varepsilon\|_{L^p(\Omega_T)}^{p-1}\|\nabla\zeta\|_{L^p(\Omega_T)}\rightarrow 0.
				\end{align*}
				It follows that
				\begin{align*}
					&\int_{\Omega}\nabla z(t)\cdot\nabla\zeta
						+(W_{,z}^\mathrm{el}(e(u(t)),c(t),z(t))
						-\alpha+\beta(\partial_t z(t)))\zeta\,\mathrm dx\\
					&\qquad\qquad\geq\int_{\Omega} \chi(t)[W_{,z}^\mathrm{el}(e(u(t)),c(t),z(t))]^+\zeta\,\mathrm dx
				\end{align*}
				for all $\zeta\in H_-^1(\Omega)\cap L^\infty(\Omega)$ and a.e. $t\in[0,T]$.
				Set $r:=-\chi[W_{,z}^\mathrm{el}(e(u),c,z)]^+$.
				For every $\xi\in L^\infty([0,T])$ with $\xi\geq 0$ a.e. on $[0,T]$ and every
				$\zeta\in H_+^1(\Omega)\cap L^\infty(\Omega)$ we also have
				\begin{align*}
						0\geq{}&\int_0^T\left(\int_{\Omega} r_\varepsilon(t)(\zeta-z_\varepsilon(t))\,\mathrm dx\right)\xi(t)\,\mathrm dt
							=\int_{\Omega_T} r_\varepsilon(\zeta-z_\varepsilon)\xi\,\mathrm dx\mathrm dt\\
						&\rightarrow
							\int_{\Omega_T} r(\zeta-z)\xi\,\mathrm dx\mathrm dt
							=\int_0^T\left(\int_{\Omega} r(t)(\zeta-z(t))\,\mathrm dx\right)\xi(t)\,\mathrm dt.
				\end{align*}
				This shows $\int_{\Omega} r(t)(\zeta-z(t))\,\mathrm dx\leq 0$ for a.e. $t\in[0,T]$.
				Hence, we obtain the inequalities (v) of Definition \ref{def:weakSolutionLimit}.
			\item
				Weakly semi-continuity arguments lead to
				\begin{align*}
					&\liminf_{\varepsilon\searrow 0}\Big(\mathcal E_\varepsilon(u_\varepsilon(t),c_\varepsilon(t),z_\varepsilon(t))
						+\int_{\Omega_t} \alpha|\partial_t z_\varepsilon|+\beta |\partial_t z_\varepsilon|^2
						+\varepsilon|\partial_t c_\varepsilon|^2\,\mathrm dx\mathrm ds
						+\int_0^t\langle\mathcal Sw_\varepsilon,w_\varepsilon\rangle\,\mathrm ds\Big)\\
					&\qquad\qquad\geq\mathcal E(u(t),c(t),z(t))
						+\int_{\Omega_t} \alpha|\partial_t z|+\beta |\partial_t z|^2
						+\int_0^t\langle\mathcal Sw,w\rangle\,\mathrm ds.
				\end{align*}
				
				Testing \eqref{eqn:weaksolution3} with $\zeta=u_\varepsilon^0-b(0)$
				and (iv) of Definition \ref{def:weakSolutionLimit} with $\zeta=u^0-b(0)$ yield
				\begin{align*}
					\varepsilon\int_{\Omega}|\nabla u_\varepsilon^0|^4\,\mathrm dx
						={}&\varepsilon\int_{\Omega}|\nabla u_\varepsilon^0|^2\nabla u_\varepsilon^0:\nabla b(0)\,\mathrm dx\\
					&-\int_{\Omega}W_{,e}^\mathrm{el}(e(u_\varepsilon^0),c^0,z_\varepsilon^0):e(u_\varepsilon^0-b(0))
						\,\mathrm dx\\
					\rightarrow{}&
						-\int_{\Omega}W_{,e}^\mathrm{el}(e(u^0),c^0,z^0):e(u^0-b(0))\,\mathrm dx=0
				\end{align*}
				as $\varepsilon\searrow 0$.
				
				Therefore, we can pass to the limit $\varepsilon\searrow 0$ in \eqref{eqn:weaksolution5} and obtain
				(vi) from Definition \ref{def:weakSolutionLimit}.\ep
		\end{enumerate}
	\end{proof}
	
\section{Higher integrability of the strain tensor}
	To prove existence results for chemical free energies of logarithmic type, a higher integrability result
	for the strain tensor based on \cite{GarckeHabil, Garcke05} will be established.
	We adapt the higher integrability result for solutions of the elliptic equation of the form
	\begin{align*}
		\left.
		\begin{cases}
			\mathrm{div}(W_{,e}^\mathrm{el}(e(u),c))=0&\text{ on }\Omega_T,\\
			W_{,e}^\mathrm{el}(e(u),c)\cdot\overrightarrow{\nu} =\sigma^\star\cdot\overrightarrow{\nu}&\text{ on }(\partial\Omega)_T
		\end{cases}
		\right\}
	\end{align*}
	to our setting with non-constant Dirichlet boundary data $b$ and the additional damage variable $z$
	in \eqref{eqn:unifyingModelClassical}.
	In the following, we will use the assumption $D=\partial\Omega$.
	
	The proof of the higher integrability result is based on
	the following special cases of the Sobolev-Poincar\'e inequalities
	and on a reverse H\"older inequality.
	\begin{theorem}[Sobolev-Poincar\'e type inequalities]
	\label{theorem:SobPoincare}
		Let $1\leq p<n$. There exists a constant $C>0$ such that
		\begin{enumerate}
		 \item[(i)]
			for all rectangles $Q\subseteq\mathbb R^n$ and all $u\in W^{1,p}(Q)$:
				\begin{align*}
					\left(\dashint_Q|u-\dashint_Q u|^{p^\star}\right)^{\frac{1}{p^\star}}
						\leq C\left(\dashint_Q|\nabla u|^p\right)^{\frac 1p}(\mathrm{diam}Q),
				\end{align*}
			\item[(ii)]
				for all rectangles $Q=\prod_{i=1}^n (a_i,b_i)\subseteq\mathbb R^n$ and all $u\in W^{1,p}(Q)$ with
				$u=0$ on\linebreak $\big\{(x_1,\ldots, x_{n-1},a_n)\,|\,a_i\leq x_i\leq b_i,\;i=1,\ldots, n-1\big\}\subseteq \partial Q$
				(in the sense of traces):
				\begin{align*}
					\left(\dashint_Q|u|^{p^\star}\right)^{\frac{1}{p^\star}}
						\leq C\left(\dashint_Q|\nabla u|^p\right)^{\frac 1p}(\mathrm{diam}Q).
				\end{align*}
		\end{enumerate}
	\end{theorem}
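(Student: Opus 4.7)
The plan is to reduce both inequalities to the classical Sobolev--Poincar\'e inequality on the unit cube by an \emph{anisotropic} affine change of variables. Given a rectangle $Q = \prod_{i=1}^n (a_i, b_i)$ with side lengths $\ell_i := b_i - a_i$ and $\ell_{\max} := \max_i \ell_i$, I introduce the map $T : (0,1)^n \to Q$, $T(y) = (a_1 + \ell_1 y_1, \ldots, a_n + \ell_n y_n)$, and set $v := u \circ T$. Since $|\det DT| = |Q|$, the change of variables preserves normalized integrals,
\begin{equation*}
\dashint_Q F\bigl(u(x)\bigr)\,\mathrm dx = \dashint_{(0,1)^n} F\bigl(v(y)\bigr)\,\mathrm dy
\end{equation*}
for every measurable $F$, and in particular $\dashint_Q u\,\mathrm dx = \dashint_{(0,1)^n} v\,\mathrm dy$. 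Hence both $\dashint_Q |u - \dashint_Q u|^{p^\star}\mathrm dx$ and $\dashint_Q |u|^{p^\star}\mathrm dx$ coincide with the analogous quantities for $v$ on the unit cube.

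The crucial observation for the gradient is that $\partial_{y_i} v = \ell_i\,(\partial_{x_i} u)\circ T$, so
\begin{equation*}
|\nabla_y v(y)|^2 = \sum_{i=1}^n \ell_i^2\,|\partial_{x_i} u(T(y))|^2 \le \ell_{\max}^2\,|\nabla_x u(T(y))|^2.
\end{equation*}
Taking $(p/2)$-th powers and averaging yields
\begin{equation*}
\left(\dashint_{(0,1)^n} |\nabla v|^p\,\mathrm dy\right)^{1/p} \le \ell_{\max}\,\left(\dashint_Q |\nabla u|^p\,\mathrm dx\right)^{1/p}.
\end{equation*}
I then invoke the classical Sobolev--Poincar\'e inequality on the unit cube, where $|(0,1)^n| = 1$ makes averaged and ordinary $L^q$-norms coincide. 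For part (i) this is the standard zero-mean version $\|v - (v)_{(0,1)^n}\|_{L^{p^\star}} \le C(n,p)\,\|\nabla v\|_{L^p}$. For part (ii), the hypothesis $u|_{\{x_n = a_n\}} = 0$ transfers under $T$ to $v|_{\{y_n = 0\}} = 0$, and the bound $\|v\|_{L^{p^\star}} \le C(n,p)\,\|\nabla v\|_{L^p}$ follows either by extending $v$ by zero across this face into $(0,1)^{n-1}\times(-1,1)$ and applying the Sobolev inequality there, or by combining the Poincar\'e inequality for functions with a trace vanishing on a set of positive $\mathcal H^{n-1}$-measure with the embedding $W^{1,p}\hookrightarrow L^{p^\star}$. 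Chaining these estimates with the identities above and with $\ell_{\max} \le \mathrm{diam}(Q)$ gives the claimed inequalities with constants depending only on $n$ and $p$.

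There is no serious obstacle; the point to watch is that an \emph{isotropic} dilation of $Q$ to a reference cube would inevitably produce a constant depending on the aspect ratio of $Q$. The \emph{anisotropic} map $T$ absorbs all geometric information about $Q$ into the single factor $\ell_{\max}$, which is in turn controlled uniformly by $\mathrm{diam}(Q)$ regardless of the shape of the rectangle. The only cost incurred in the gradient bound is the trivial inequality $\sum_i \ell_i^2 a_i^2 \le \ell_{\max}^2 \sum_i a_i^2$, which is precisely what allows the diameter (and not some unfavourable combination of side lengths) to appear on the right-hand side.
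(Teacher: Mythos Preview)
Your proposal is correct and follows the same approach the paper indicates: the paper merely remarks that the theorem ``can be obtained by considering the corresponding inequalities on the unit cube $(0,1)^n$ \ldots\ and then using a scaling argument,'' and you carry this out in detail. Your explicit use of the \emph{anisotropic} affine map $T$ is exactly the right implementation of that scaling argument for general rectangles, and your observation that an isotropic dilation would introduce an aspect-ratio dependence is a point the paper leaves implicit.
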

	Theorem \ref{theorem:SobPoincare} can be obtained by considering the corresponding inequalities on the unit cube $(0,1)^n$
	(for instance the case $1<p<n$ was proven by Sobolev \cite{Sob36} while Nirenberg \cite{Nirenberg59} gave a proof to $p=1$)
	and then using a scaling argument.
	\begin{theorem}[Reverse H\"older inequality, see \cite{Giaquinta83}]
	\label{theorem:revHoelder}
		Let $Q\subseteq\mathbb R^n$ be a cube, $g\in L_\mathrm{loc}^q(Q)$ for some $q>1$ and $g\geq 0$.
		Suppose that there exist a constant $b>0$ and a function $f\in L_\mathrm{loc}^r(Q)$ with $r>q$ and $f\geq 0$
		such that
		\begin{align*}
			\dashint_{Q_R(x_0)}g^q\,\mathrm dx\leq b\left(\dashint_{Q_{2R}(x_0)}g\,\mathrm dx\right)^q
			+\dashint_{Q_{2R}(x_0)}f^q\,\mathrm dx
		\end{align*}
		for each $x_0\in Q$ and all $R>0$ with $2R<\mathrm{dist}(x_0,\partial Q)$.
		Then $g\in L_\mathrm{loc}^s(Q)$ for $s\in[q,q+\varepsilon)$ with some $\varepsilon>0$ and
		\begin{align*}
			\left(\dashint_{Q_R(x_0)}g^s\,\mathrm dx\right)^{\frac 1s}\leq c\left(\left(\dashint_{Q_{2R}(x_0)}g^q
			\,\mathrm dx\right)^{\frac 1q}+\left(\dashint_{Q_{2R}(x_0)}f^s\,\mathrm dx\right)^{\frac 1s}\right)
		\end{align*}
		for all $x_0\in Q$ and $R>0$ such that $Q_{2R}(x_0)\subseteq Q$. The positive constants $c,\varepsilon>0$ depend
		on $b$, $q$, $n$ and $r$.
	\end{theorem}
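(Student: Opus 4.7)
The plan is to follow Gehring's classical approach, combining a Calder\'on--Zygmund stopping-time decomposition with a good-$\lambda$ absorption argument. After a scaling reduction that normalizes the cube $Q_{2R}(x_0)$ and replaces $g, f$ by nonnegative equivalents, I would fix a concentric subcube $Q_R(x_0)$ and examine the superlevel sets $E(t) := \{x \in Q_R(x_0) : g(x) > t\}$ for thresholds $t$ exceeding a baseline $t_0$ determined by $\left(\dashint_{Q_{2R}(x_0)} g^q\right)^{1/q}$.

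For each such $t \geq t_0$, a Calder\'on--Zygmund stopping-time decomposition applied to $g^q$ at level $t^q$ produces disjoint dyadic subcubes $\{Q_j\}$ of $Q_{2R}(x_0)$ with $t^q < \dashint_{Q_j} g^q \leq 2^n t^q$ and $g \leq t$ almost everywhere on $Q_R(x_0)\setminus\bigcup_j Q_j$. The hypothesis, applied to a concentric dilate $Q_j^\star$ of each $Q_j$ (small enough so that $Q_j^\star \subset Q_{2R}(x_0)$), gives
$$\dashint_{Q_j} g^q \dx \leq b\Bigl(\dashint_{Q_j^\star} g \dx\Bigr)^q + \dashint_{Q_j^\star} f^q \dx.$$
Splitting $g = g\chi_{\{g \leq \eta t\}} + g\chi_{\{g > \eta t\}}$ for a small $\eta>0$ to be chosen and summing over $j$, one extracts an inequality of good-$\lambda$ type,
$$\int_{E(t)} g^q \dx \leq C\,b\,\eta^q \int_{E(\eta t)} g^q \dx + C \int_{\{f > \eta t\}} f^q \dx + C\,t^q\,|E(t)|\,\text{(tail)},$$
where the last term is controlled by the truncation level.

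The conclusion follows by multiplying this bound by $s\,t^{s-q-1}$ for $s$ slightly larger than $q$, integrating in $t$ over $(t_0, \infty)$, and applying Fubini to convert integrals of the distribution function back into $L^s$-norms. This yields
$$\int_{Q_R(x_0)} g^s \dx \leq C'\,b\,\eta^q \int_{Q_{2R}(x_0)} g^s \dx + C' \int_{Q_{2R}(x_0)} f^s \dx + C''\,t_0^{s-q}\int_{Q_{2R}(x_0)} g^q \dx.$$
Choosing $\eta$ small enough that $C'b\eta^q < 1$ absorbs the first term on the right into the left-hand side, and rescaling the baseline $t_0$ appropriately gives the averaged inequality stated in the theorem for $s \in [q, q+\varepsilon)$ with $\varepsilon > 0$ depending only on $b$, $q$, $n$, $r$.

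The main obstacle is the quantitative control of the absorption step: the admissible exponent $\varepsilon$ is genuinely small and tied to $b$, so one must carefully track constants through the stopping-time decomposition, the choice of dilation factor for $Q_j^\star$, and the selection of $\eta$. A secondary technical point is ensuring the decomposition cubes $Q_j^\star$ remain inside $Q_{2R}(x_0)$ so that the reverse-H\"older hypothesis is legitimately applicable on them, which is where the factor-of-two gap between $Q_R(x_0)$ and $Q_{2R}(x_0)$ in the statement is consumed.
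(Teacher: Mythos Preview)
The paper does not prove this theorem at all: it is stated with the attribution ``see \cite{Giaquinta83}'' and used as a black box in the proof of Theorem~\ref{theorem:integrability}. So there is no proof in the paper to compare your proposal against.

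That said, your outline is the standard Gehring-type argument, which is indeed how the result is proved in Giaquinta's monograph. One point deserves sharper attention: in your final displayed inequality the left-hand side is an integral over $Q_R(x_0)$ while the term you wish to absorb is an integral over the larger cube $Q_{2R}(x_0)$, so choosing $\eta$ small does not literally allow absorption into the left. The classical remedy is either to replace $g$ by a truncation $g_m=\min(g,m)$ so that all quantities are finite and the absorption can be carried out at the level of the distribution-function inequality (before one lands on mismatched cubes), or to invoke an iteration lemma on a chain of concentric cubes. You flag the absorption step as the main obstacle, but the specific mismatch of domains is the concrete place where the argument as written would stall.
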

	\begin{theorem}[Higher integrability]
	\label{theorem:integrability}
		Let $b\in W^{1,\infty}(\Omega;\mathbb R^n)$, $z\in L^\infty(\Omega)$ with $0\leq z\leq 1$ a.e.
		in $\Omega$ and $c\in L^\mu(\Omega;\mathbb R^N)$ for some $\mu>4$.
		Then there exists some $p\in(2,\mu/2]$ such that for all $u\in H^1(\Omega;\mathbb R^n)$
		which satisfy $u|_D=b|_D$ and
		\begin{align}
		\label{eqn:higherItegrabilityCond}
			\int_\Omega W_{,e}^\mathrm{el}(e(u),c,z):e(\zeta)\,\mathrm dx=0\text{ for all }\zeta\in H_D^1(\Omega;\mathbb R^n),
		\end{align}
		we obtain $u\in W^{1,p}(\Omega;\mathbb R^n)$ and
		\begin{align}
		\label{eqn:higherIntegrabilityEstimate}
			\|\nabla u\|_{L^p(\Omega;\mathbb R^{n\times n})}\leq C(\|\nabla u\|_{L^2(\Omega;\mathbb R^{n\times n})}
				+\|c\|_{L^{2p}(\Omega;\mathbb R^N)}^2+1).
		\end{align}
		The positive constants $p$ and $C$ are independent of $u$, $c$, $z$.
	\end{theorem}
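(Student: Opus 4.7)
The plan is to derive the result by the classical Gehring--Giaquinta technique: establish a reverse H\"older inequality of the form
\begin{align*}
\dashint_{Q_R(x_0)}|\nabla u|^{2}\dx \le b\Bigl(\dashint_{Q_{2R}(x_0)}|\nabla u|^{q}\dx\Bigr)^{2/q}+\dashint_{Q_{2R}(x_0)}f^{2}\dx
\end{align*}
for some $q<2$ (so that $2/q>1$) with $f$ of strictly higher integrability than $|\nabla u|^{2}$, and then invoke Theorem~\ref{theorem:revHoelder} applied to $g:=|\nabla u|^{q}$ with exponent $2/q$. This yields $|\nabla u|\in L^{p}_{\mathrm{loc}}$ for some $p>2$, and a finite covering of $\ol\Omega$ by cubes transfers this into the quantitative bound~\eqref{eqn:higherIntegrabilityEstimate}.

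For the Caccioppoli step I would test~\eqref{eqn:higherItegrabilityCond} with $\zeta=(u-b-\xi-Ax)\phi^{2}$, where $\phi$ is a standard cut-off adapted to $Q_R(x_0)\subseteq Q_{2R}(x_0)$, and $\xi\in\R^{n}$, $A$ skew-symmetric are chosen so that $u-b-\xi-Ax$ has vanishing mean and vanishing mean skew gradient on $Q_{2R}$. The strong monotonicity~\eqref{eqn:growthEst3} gives
\begin{align*}
\eta\,|e(u)|^{2}\le \bigl(W^{\mathrm{el}}_{,e}(e(u),c,z)-W^{\mathrm{el}}_{,e}(0,c,z)\bigr):e(u),
\end{align*}
while~\eqref{eqn:growthEst2} together with~\eqref{eqn:growthEst4} yield the linear-in-$|e|$ bound $|W^{\mathrm{el}}_{,e}(e,c,z)|\le C(|c|^{2}+|e|+1)$. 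Using the equation to rewrite $\int W^{\mathrm{el}}_{,e}(e(u),c,z):e(u)\phi^{2}\dx$ as a term involving $\nabla\phi^{2}$ and $u-b-\xi-Ax$, and absorbing $|e(u)|^{2}$ via Young's inequality, produces
\begin{align*}
\int_{Q_R}|e(u)|^{2}\dx\le \frac{C}{R^{2}}\int_{Q_{2R}}|u-b-\xi-Ax|^{2}\dx+C\int_{Q_{2R}}\bigl(|c|^{4}+|\nabla b|^{2}+1\bigr)\dx.
\end{align*}

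For interior cubes $Q_{2R}\subseteq\Omega$, Korn's inequality on cubes (applicable precisely because the skew gradient of $u-b-\xi-Ax$ has vanishing mean) combined with Theorem~\ref{theorem:SobPoincare}(i) at the Sobolev conjugate dual $q:=2n/(n+2)$, for which $q^{\star}=2$, yields
\begin{align*}
\Bigl(\dashint_{Q_{2R}}\Bigl|\tfrac{u-b-\xi-Ax}{R}\Bigr|^{2}\dx\Bigr)^{1/2}\le C\Bigl(\dashint_{Q_{2R}}\bigl(|\nabla u|^{q}+|\nabla b|^{q}\bigr)\dx\Bigr)^{1/q}.
\end{align*}
For cubes centred near $\partial\Omega$ the Lipschitz boundary is flattened locally; since $D=\partial\Omega$ the function $u-b$ has vanishing Dirichlet trace on a face of the flattened boundary cube, so Theorem~\ref{theorem:SobPoincare}(ii) applies directly without any mean subtraction. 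Inserting these $L^{2}$-estimates into the Caccioppoli inequality gives the desired reverse H\"older inequality with $f^{2}=C(|c|^{4}+|\nabla b|^{2}+1)$, and Theorem~\ref{theorem:revHoelder} delivers $|\nabla u|\in L^{p}_{\mathrm{loc}}$ for some $p>2$. The admissible range $p\in(2,\mu/2]$ is then dictated by the requirement $f\in L^{2p/q}_{\mathrm{loc}}$, which translates into $|c|^{2}\in L^{p}_{\mathrm{loc}}$, i.e.\ $c\in L^{2p}_{\mathrm{loc}}$; this is compatible with $c\in L^{\mu}$ precisely when $2p\le\mu$.

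The main obstacle is the coordination of three delicate ingredients in the Caccioppoli step. First, the transition from an $e(u)$-estimate to a $\nabla u$-estimate forces the subtraction of an infinitesimal rigid motion $\xi+Ax$ so that Korn's inequality holds with a scale-invariant constant on the cube. Second, the borderline growth~\eqref{eqn:growthEst2} generates a $|c|^{4}$ right-hand side after Young's inequality, which is precisely why the hypothesis $\mu>4$ is needed: Theorem~\ref{theorem:revHoelder} requires $f$ to possess \emph{strictly} higher integrability than $|\nabla u|^{2}$. Third, the boundary argument uses both $b\in W^{1,\infty}$ (so that $\nabla b$-contributions can be absorbed after the change of variables) and $D=\partial\Omega$ (so that $u-b$ vanishes on the entire boundary of each flattened boundary cube), which jointly explain these two hypotheses of the theorem.
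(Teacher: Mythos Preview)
Your proposal is correct and follows essentially the same Gehring--Giaquinta route as the paper: a Caccioppoli estimate obtained by testing~\eqref{eqn:higherItegrabilityCond} with a cutoff times a shifted version of $u$, then Sobolev--Poincar\'e at the subcritical exponent $q=2n/(n+2)$ to produce a reverse H\"older inequality, and finally Theorem~\ref{theorem:revHoelder} together with a covering argument. The identification of $f^{2}=C(|c|^{4}+|\nabla b|^{2}+1)$ and the explanation of the constraint $p\le\mu/2$ match the paper exactly.

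The one difference lies in the Korn step. You subtract an infinitesimal rigid motion $\xi+Ax$ and invoke Korn's second inequality on cubes (for functions with vanishing mean skew gradient). The paper instead applies Korn's inequality for functions with \emph{zero boundary values} to the compactly supported function $\xi(u-\mu)$ (here $\xi$ is the cutoff and $\mu$ the mean of $\tilde u$), using the identity $e(\xi(u-\mu))=\xi e(u)+\tfrac12\bigl((u-\mu)(\nabla\xi)^{t}+\nabla\xi(u-\mu)^{t}\bigr)$; this converts the $|e(u)|^{2}$-Caccioppoli directly into a $|\nabla u|^{2}$-Caccioppoli on the smaller cube. In your sketch this conversion is not quite visible: your Caccioppoli step produces $\int_{Q_{R}}|e(u)|^{2}$ on the left-hand side, whereas the reverse H\"older inequality you announce at the outset has $\int_{Q_{R}}|\nabla u|^{2}$. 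You can close this either by adopting the paper's device, or by running Gehring on $g=|e(u)|^{q}$ (bounding the right-hand side of the Caccioppoli by $|e(v)|^{q}=|e(u)-e(b)|^{q}$ via the combined Korn--Sobolev--Poincar\'e inequality, which is presumably what you had in mind) and then applying global $L^{p}$-Korn on the Lipschitz domain $\Omega$ at the very end to pass from $e(u)\in L^{p}$ to $\nabla u\in L^{p}$.
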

	\begin{proof}
		The proof is based on \cite[Lemma 4.4 and Theorem 4.3]{GarckeHabil} and uses a covering argument.
		However, due to the non-constant boundary condition, we need to apply a more general Sobolev-Poincar\'e inequality
		(see Theorem \ref{theorem:SobPoincare} (ii)) than in \cite{GarckeHabil}.
		\begin{enumerate}
			\renewcommand{\labelenumi}{(\roman{enumi})}
			\item\textsc{Higher integrability at the boundary. }
			
				Let $x_0\in\partial\Omega$. Then there exist an $R_0>0$ and a bi-Lipschitz function $\tau:Q\rightarrow\mathbb R^n$
				with the open cube $Q:=Q_{R_0}(0)$ such that $x_0\in \tau(Q)$ and
				\begin{align*}
					&\tau(Q^+)\subseteq\Omega,\\
					&\tau(Q^-)\subseteq\mathbb R^n\setminus\overline{\Omega},
				\end{align*}
				where $Q^+:=\{x\in Q\,|\,x_n>0\}$ and $Q^-:=\{x\in Q\,|\,x_n<0\}$.
				Define the transformed functions $\tilde u,\tilde b\in H^1(Q^+;\mathbb R^n)$, $\tilde c\in H^1(Q^+)$ and
				$\tilde z\in L^\infty(Q^+)$ as
				\begin{align*}
					(\tilde u,\tilde b,\tilde c,\tilde z)(x):=
						(u,b,c,z)(\tau(x)).
				\end{align*}
				To proceed, let $y_0\in Q$ and $R<\frac 12\mathrm{dist}(y_0,\partial Q)$ and 
				define for each $R'>0$ the sets
				\begin{align*}
					Q_{R'}^{\pm}(y_0):=\{x\in Q_{R'}(y_0)\,|\,x_n\gtrless 0\}.
				\end{align*}
				
				We distinguish three cases:
				
				\textbf{Case 1. }We first consider the case $Q_R^+(y_0)\neq\emptyset$ and $Q_{\frac32 R}^-(y_0)\neq\emptyset$.
				
				The bi-Lipschitz continuity of $\tau$ ensures
				\begin{align*}
					\mathrm{dist}(\tau(\partial Q_{2R}^+(y_0))\cap \Omega,\tau(\partial Q_R^+(y_0))\cap \Omega)>RC_1,
				\end{align*}
				where $C_1>0$ is independent of $R$ and $y_0$.
				Let $\xi\in \C C^\infty_0(\Omega)$ be a cutoff function with the properties:
				
				\begin{tabular}{ll}
					\begin{minipage}{19em}
						\begin{enumerate}
							\renewcommand{\labelenumi}{(\roman{enumi})}
							\item[(a)] $\xi=0\text{ in }\Omega\setminus\tau(Q_{2R}(y_0))$,
							\item[(b)] $0\leq\xi\leq 1\text{ in }\Omega$,
						\end{enumerate}
					\end{minipage}
					&
					\begin{minipage}{21em}
						\begin{enumerate}
							\renewcommand{\labelenumi}{(\roman{enumi})}
							\item[(c)] $\xi\equiv 1\text{ in }\tau(Q_R(y_0))\cap\Omega$,
							\item[(d)] $|\nabla\xi|\leq \frac{2}{C_1}R^{-1}$.
						\end{enumerate}
					\end{minipage}
				\end{tabular}
				
				Testing \eqref{eqn:higherItegrabilityCond} with $\zeta=\xi^2(u-b)$, using the computation
				\begin{align*}
					e(\zeta)=\xi^2 e(u)-\xi^2 e(b)+\xi((u-b)(\nabla\xi)^t+\nabla\xi(u-b)^t),
				\end{align*}
				and \eqref{eqn:growthEst1}, we obtain
				\begin{align}
					&\int_\Omega\xi^2 W_{,e}^\mathrm{el}(e(u),c,z):e(u)\,\mathrm dx
						\notag\\
					&\quad=\int_\Omega\xi^2 W_{,e}^\mathrm{el}(e(u),c,z):e(b)\,\mathrm dx
					-2\int_\Omega\xi W_{,e}^\mathrm{el}(e(u),c,z):((u-b)(\nabla\xi)^t)\,\mathrm dx.
					\label{eqn:temp1}
				\end{align}
				By \eqref{eqn:growthEst3}, \eqref{eqn:growthEst4} and \eqref{eqn:growthEst2} we also have the estimates
				\begin{align*}
					&\eta|e(u)|^2\leq W_{,e}^\mathrm{el}(e(u),c,z):e(u)+C(|c|^2+1)|e(u)|,\\
					&|W_{,e}^\mathrm{el}(e(u),c,z):((u-b)(\nabla\xi)^t|\leq \frac{C}{R}(|e(u)|+|c|^2+1)|u-b|,\\
					&|W_{,e}^\mathrm{el}(e(u),c,z):e(b)|\leq (|e(u)|+|c|^2+1)|e(b)|.
				\end{align*}
				Therefore, \eqref{eqn:temp1} can be estimated by
				\begin{align*}
					\eta\int_\Omega\xi^2|e(u)|^2\,\mathrm dx
						\leq{}& C\int_\Omega\xi^2(|c|^2+1)|e(u)|\,\mathrm dx
						+\frac{C}{R}\int_\Omega\xi(|e(u)|+|c|^2+1)|u-b|\,\mathrm dx\\
					&+C\int_\Omega\xi^2(|e(u)|+|c|^2+1)|e(b)|\,\mathrm dx.
				\end{align*}
				Young's inequality yields
				\begin{align}
				\label{eqn:regProofEst1}
					c_1\int_\Omega\xi^2|e(u)|^2\,\mathrm dx
						\leq{}& C\int_{\Omega}\xi^2(|c|^4+1)\,\mathrm dx
						+\frac{C}{R^2}\int_{\Omega}|u-b|^2\,\mathrm dx.
				\end{align}
				We choose $\mu=\dashint_{Q_{2R}^+(y_0)} \tilde u\,\mathrm dx$.
				The calculation $e(\xi(u-\mu))=\xi e(u)+\frac12((u-\mu)(\nabla\xi)^t+\nabla\xi(u-\mu)^t)$ leads to
				\begin{align}
				\label{eqn:regProofEst2}
					\int_\Omega|e(\xi(u-\mu))|^2\,\mathrm dx\leq 2\left(\int_\Omega\xi^2|e(u)|^2\,\mathrm dx+\int_\Omega|u-\mu|^2|\nabla\xi|^2
						\,\mathrm dx\right).
				\end{align}
				Combining \eqref{eqn:regProofEst1} and \eqref{eqn:regProofEst2}, applying Korn's inequality for $H^1$-functions with zero boundary values and using (a) and (b)
				gives
				\begin{align*}
					\int_\Omega|\nabla(\xi(u-\mu))|^2\,\mathrm dx
					\leq{}&C\int_{\tau(Q_{2R}^+(y_0))}(|c|^4+1)\,\mathrm dx
						+\frac{C}{R^2}\int_{\tau(Q_{2R}^+(y_0))}|u-b|^2\,\mathrm dx\\
					&+\frac{C}{R^2}\int_{\tau(Q_{2R}^+(y_0))}|u-\mu|^2\,\mathrm dx.
				\end{align*}
				Because of $\nabla(\xi(u-\mu))=\xi\nabla u+(u-\mu)(\nabla\xi)^t$ we derive by (a) and (c)
				the following type of Caccioppoli-inequality:
				\begin{align*}
					\int_{\tau(Q_R^+(y_0))}|\nabla u|^2\,\mathrm dx
					\leq{}&C\int_{\tau(Q_{2R}^+(y_0))}(|c|^4+1)\,\mathrm dx
						+\frac{C}{R^2}\int_{\tau(Q_{2R}^+(y_0))}|u-b|^2\,\mathrm dx\\
					&+\frac{C}{R^2}\int_{\tau(Q_{2R}^+(y_0))}|u-\mu|^2\,\mathrm dx.
				\end{align*}
				Integral transformation by $\tau$ implies
				\begin{align*}
					\int_{Q_R^+(y_0)}|\nabla \tilde u|^2\,\mathrm dx
					\leq{}&C\int_{Q_{2R}^+(y_0)}(|\tilde c|^4+1)\,\mathrm dx
						+\frac{C}{R^2}\int_{Q_{2R}^+(y_0)}|\tilde u-\tilde b|^2\,\mathrm dx\\
					&+\frac{C}{R^2}\int_{Q_{2R}^+(y_0)}|\tilde u-\mu|^2\,\mathrm dx.
				\end{align*}
				The condition $Q_{\frac32 R}^-(y_0)\neq\emptyset$ and $D=\partial\Omega$ imply
				that $\tilde u-\tilde b$ vanishes on $\partial\big(Q_{2R}^+(y_0)\big)\cap\mathbb R^{n-1}\times\{0\}$.
				Therefore, we obtain by applying both variants of the Poincar\'e-Sobolev inequality in Theorem \ref{theorem:SobPoincare}
				for $p=2n/(n+2)$:
				\begin{align}
					\int_{Q_R^+(y_0)}|\nabla \tilde u|^2\,\mathrm dx
					\leq{}&C\int_{Q_{2R}^+(y_0)}(|\tilde c|^4+1)\,\mathrm dx
						+\frac{C}{R^2}\mathcal L^n(Q_{2R}^+(y_0))^{-\frac{2}{n}}\mathrm{diam}(Q_{2R}^+(y_0))^2\notag\\
						&\cdot\left[\left(\int_{Q_{2R}^+(y_0)}|\nabla \tilde u-\nabla \tilde b|^\frac{2n}{n+2}
						\,\mathrm dx\right)^\frac{n+2}{n}
						+\left(\int_{Q_{2R}^+(y_0)}|\nabla \tilde u|^\frac{2n}{n+2}\,\mathrm dx\right)^\frac{n+2}{n}\right].
					\label{eqn:temp2}
				\end{align}
				Note that if $n=1$ we cannot apply Theorem \ref{theorem:SobPoincare} because of $p=2n/(n+2)<1$.
				In this case, we can work with the inequalities in Theorem \ref{theorem:SobPoincare} where $p$ is substituted by $1$ and
				$p^\star$ is substituted by $2$. However, we will only treat the more delicate case $n\geq 2$ in the following.
				
				The estimates $\mathrm{diam}(Q_{2R}^+(y_0))\leq CR$ and $\mathcal L^n(Q_{2R}^+(y_0))\geq R^n$ (because of
				$Q_R^+(y_0)\neq\emptyset$) show
				\begin{align}
				\label{eqn:LnEstimate}
					\mathcal L^n(Q_{2R}^+(y_0))^{-\frac{2}{n}}\mathrm{diam}(Q_{2R}^+(y_0))^2\leq C.
				\end{align}
				Now, dividing \eqref{eqn:temp2} by $\mathcal L^n(Q_R(y_0))$ and using \eqref{eqn:LnEstimate} and
				\begin{align*}
					\frac{1}{R^2}\frac{1}{\mathcal L^n(Q_{2R}(y_0))}\leq C\left(\frac{1}{\mathcal L^n(Q_{2R}(y_0))}\right)^\frac{n+2}{n}
				\end{align*}
				gives
				\begin{align*}
					\frac{1}{\mathcal L^n(Q_R(y_0))}\int_{Q_R^+(y_0)}|\nabla \tilde u|^2\,\mathrm dx
					\leq{}&\frac{C}{\mathcal L^n(Q_{2R}(y_0))}\int_{Q_{2R}^+(y_0)}(|\tilde c|^4+1)\,\mathrm dx\\
						&+C\left(\frac{1}{\mathcal L^n(Q_{2R}(y_0))}
						\int_{Q_{2R}^+(y_0)}|\nabla \tilde u|^\frac{2n}{n+2}\,\mathrm dx\right)^\frac{n+2}{n}\\
						&+C\left(\frac{1}{\mathcal L^n(Q_{2R}(y_0))}
						\int_{Q_{2R}^+(y_0)}|\nabla \tilde b|^\frac{2n}{n+2}\,\mathrm dx\right)^\frac{n+2}{n}.
				\end{align*}
				Observe that
				\begin{align*}
					\left(\frac{1}{\mathcal L^n(Q_{2R}(y_0))}
						\int_{Q_{2R}^+(y_0)}|\nabla \tilde b|^\frac{2n}{n+2}\,\mathrm dx\right)^\frac{n+2}{n}
						\leq \|\nabla b\|_{L^\infty(\Omega)}^2.
				\end{align*}
				Define the following functions on $Q$:
				\begin{align*}
					g(x):=
					\begin{cases}
						|\nabla\tilde u(x)|^{\frac{2n}{n+2}}&\text{ for }x\in Q^+,\\
						0&\text{ for }x\in Q\setminus Q^+
					\end{cases}
				\end{align*}
				and
				\begin{align*}
					f(x):=
					\begin{cases}
						C(|\tilde c|^4+\|\nabla b\|_{L^\infty(\Omega)}^2+1)^\frac{n}{n+2}&\text{ for }x\in Q^+,\\
						0&\text{ for }x\in Q\setminus Q^+.
					\end{cases}
				\end{align*}
				We eventually get
				\begin{align}
				\label{eqn:regProofEst3}
					\dashint_{Q_R(y_0)}g^{\frac{n+2}{n}}\,\mathrm dx
					\leq\dashint_{Q_{2R}(y_0)}f^\frac{n+2}{n}\,\mathrm dx
						+C\left(\dashint_{Q_{2R}(y_0)}g\,\mathrm dx\right)^\frac{n+2}{n}.
				\end{align}
				
				\textbf{Case 2. } Assume $Q_R^+(y_0)\neq\emptyset$ and $Q_{\frac32 R}^-(y_0)=\emptyset$.
				
				The bi-Lipschitz continuity of $\tau$ implies
				\begin{align*}
					\mathrm{dist}(\tau(\partial Q_{\frac 32R}(y_0)),\tau(\partial Q_R(y_0)))>RC_1,
				\end{align*}
				where $C_1>0$ is independent of $R$ and $y_0$.
				Therefore, we can choose a cutoff function $\xi\in \C C^\infty_0(\Omega)$ which satisfies
				
				\begin{tabular}{ll}
					\begin{minipage}{19em}
						\begin{enumerate}
							\renewcommand{\labelenumi}{(\roman{enumi})}
							\item[(a)] $\xi=0\text{ in }\Omega\setminus \tau(Q_{\frac 32R}(x_0))$,
							\item[(b)] $0\leq\xi\leq 1\text{ in }\Omega$,
						\end{enumerate}
					\end{minipage}
					&
					\begin{minipage}{21em}
						\begin{enumerate}
							\renewcommand{\labelenumi}{(\roman{enumi})}
							\item[(c)] $\xi\equiv 1\text{ in } \tau(Q_R(x_0))$,
							\item[(d)] $|\nabla\xi|\leq \frac{2}{C_1}R^{-1}$.
						\end{enumerate}
					\end{minipage}
				\end{tabular}
				
				Testing \eqref{eqn:higherItegrabilityCond} with $\xi=\zeta^2(u-\mu)$
				and $\mu:=\dashint_{Q_{\frac32R}(x_0)}\tilde u\,\mathrm dx$ yields as in the previous case
				\begin{align*}
					\int_{\tau(Q_R(x_0))}|\nabla u|^2\,\mathrm dx
					\leq{}&C\int_{\tau(Q_{\frac32R}(x_0))}(|c|^4+1)\,\mathrm dx
						+\frac{C}{R^2}\int_{\tau(Q_{\frac32R}(x_0))}|u-\mu|^2\,\mathrm dx.
				\end{align*}
				Consequently,
				\begin{align*}
					\dashint_{Q_R(x_0)}|\nabla \tilde u|^2\,\mathrm dx
					\leq{}&C\dashint_{Q_{\frac32R}(x_0)}(|\tilde c|^4+1)\,\mathrm dx
						+C\left(\dashint_{Q_{\frac32R}(x_0)}|\nabla \tilde u|^\frac{2n}{n+2}\,\mathrm dx\right)^\frac{n+2}{n}.
				\end{align*}
				Therefore, the inequality \eqref{eqn:regProofEst3} is also satisfied in this case.
				
				\textbf{Case 3. } Assume $Q_R^+(y_0)=\emptyset$.
				
				In this case, inequality \eqref{eqn:regProofEst3} trivially holds.

				In all three cases, the reverse H\"older inequality (see Theorem \ref{theorem:revHoelder}) shows $g\in L_\mathrm{loc}^s(Q)$ for all
				$s\in\big[\frac{n+2}{n},\frac{n+2}{n}+\varepsilon\big)$ and some $\varepsilon>0$ depending on
				$R_0$ and $n$.

			\item\textsc{Higher integrability in the interior. }
			
				This case follows with much less effort and is only sketched here.
				
				Let $x_0\in\Omega$ be arbitrary and $R>0$ such that
				$Q_{2R}(x_0)\subseteq\Omega$.
				We take a cutoff function	$\xi\in \C C^\infty_0(\Omega)$ with
				
				\begin{tabular}{ll}
					\begin{minipage}{19em}
						\begin{enumerate}
							\renewcommand{\labelenumi}{(\roman{enumi})}
							\item[(a)] $\xi=0\text{ in }\Omega\setminus Q_{2R}(x_0)$,
							\item[(b)] $0\leq\xi\leq 1\text{ in }\Omega$,
						\end{enumerate}
					\end{minipage}
					&
					\begin{minipage}{21em}
						\begin{enumerate}
							\renewcommand{\labelenumi}{(\roman{enumi})}
							\item[(c)] $\xi\equiv 1\text{ in } Q_R(x_0)$,
							\item[(d)] $|\nabla\xi|\leq \frac{2}{R}$.
						\end{enumerate}
					\end{minipage}
				\end{tabular}
				
				Testing \eqref{eqn:higherItegrabilityCond} with $\xi=\zeta^2(u-\mu)$
				and $\mu=\dashint_{Q_{2R}(x_0)}u\,\mathrm dx$ yields with the same computation as in the case (i):
				\begin{align*}
					\int_{Q_R(x_0)}|\nabla u|^2\,\mathrm dx
					\leq{}&C\int_{Q_{2R}(x_0)}(|c|^4+1)\,\mathrm dx
						+\frac{C}{R^2}\int_{Q_{2R}(x_0)}|u-\mu|^2\,\mathrm dx.
				\end{align*}
				The Poincar\'e-Sobolev inequality implies
				\begin{align*}
					\dashint_{Q_R(x_0)}|\nabla u|^2\,\mathrm dx
					\leq{}&C\dashint_{Q_{2R}(x_0)}(|c|^4+1)\,\mathrm dx
						+C\left(\dashint_{Q_{2R}(x_0)}|\nabla u|^\frac{2n}{n+2}\,\mathrm dx\right)^\frac{n+2}{n}.
				\end{align*}
				Applying Theorem \ref{theorem:revHoelder} with $g=|\nabla u|^\frac{2n}{n+2}$, $q=\frac{n+2}{n}$
				and $f=C(|c|^4+1)^\frac{n}{n+2}$ finishes the proof.
				\ep
		\end{enumerate}
	\end{proof}

\section{Existence of weak solutions of \eqref{eqn:unifyingModelClassical} - logarithmic case}

	The challenge here is to establish the integral equation (iii) in Definition \ref{def:weakSolutionLimit} because
	the derivative of the logarithmic free chemical energy
        \eqref{eqn:growthEst8} becomes singular if one of the $c_k$'s approaches $0$.
	We only sketch the proof in this section since all essential ideas can be found in \cite{GarckeHabil, Garcke05}.
	We use a regularization method suggested in \cite{EL91} and also used
        in \cite{GarckeHabil, Garcke05}.

	The energy gradient tensor is assumed to be of the form
        $\mathbf\Gamma=\gamma \, \mathrm{Id}$ with a constant $\gamma>0$. 
	Define a $\C C^2(\mathbb R^N)$ regularization with the regularization parameter $\delta>0$ as
	\begin{align*}
		W^{\mathrm{ch},\delta}(c):=\theta\sum_{k=1}^N\phi^\delta(c^k)+\frac 12c\cdot Ac,
	\end{align*}
	with
	\begin{align*}
		\phi^\delta(x):=
		\begin{cases}
			x\log(x)&\text{for }d\geq\delta,\\
			x\log(\delta)-\frac\delta2+\frac{x^2}{2\delta}&\text{for }x<\delta.
		\end{cases}
	\end{align*}
	Elliott and Luckhaus showed that the regularization $W^{\mathrm{ch},\delta}$ is uniformly bounded from below.
	\begin{lemma}[cf.~ \cite{EL91}]
	\label{lemma:uniformWchEstimate}
		There exist constants $\delta_0>0$ and $C>0$ such that
		\begin{align*}
			W^{\mathrm{ch},\delta}(c)\geq -C\qquad\text{for all } c\in\Sigma,\;\delta\in(0,\delta_0).
		\end{align*}
	\end{lemma}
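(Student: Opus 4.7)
The plan is to reduce the claim to a scalar estimate on $\phi^\delta$. Since $W^{\mathrm{ch},\delta}(c)=\theta\sum_{k=1}^N\phi^\delta(c_k)+\tfrac12 c\cdot Ac$, and the symmetric quadratic form $\tfrac12 c\cdot Ac$ is bounded from below on the physically relevant bounded portion of $\Sigma$ (i.e.\ on $\{c\in\Sigma\,|\,c_k\in[0,1]\}$, or else by absorbing $Ac\cdot c$ into the coercive $x^2/(2\delta)$ contribution of $\phi^\delta$ in its lower branch for $\delta$ small), it suffices to show
\begin{equation*}
  \phi^\delta(x)\geq -C\qquad\text{uniformly in } x\in\mathbb{R}\text{ and }\delta\in(0,\delta_0),
\end{equation*}
for some $\delta_0>0$ and $C>0$ independent of $x,\delta$.

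For the upper branch $x\geq\delta$ this is just the classical entropy bound $\phi^\delta(x)=x\log x\geq -1/e$. For the lower branch $x<\delta$ I would complete the square in the expression $x\log\delta-\delta/2+x^2/(2\delta)$:
\begin{equation*}
  \phi^\delta(x)\;=\;\frac{1}{2\delta}\bigl(x+\delta\log\delta\bigr)^{2}-\frac{\delta(\log\delta)^{2}}{2}-\frac{\delta}{2}\;\geq\;-\frac{\delta}{2}\bigl(1+(\log\delta)^{2}\bigr).
\end{equation*}
Because $\delta(\log\delta)^2\to 0$ as $\delta\searrow 0$, the right-hand side tends to $0$; choosing $\delta_0$ small enough ensures $\phi^\delta(x)\geq -1$ on all of $\mathbb{R}$ for every $\delta\in(0,\delta_0)$. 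Summing over $k=1,\dots,N$ gives $\theta\sum_k\phi^\delta(c_k)\geq -\theta N$, and combining with the lower bound on $\tfrac12 c\cdot Ac$ yields the assertion.

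The only genuine computation is the square completion in the lower branch, and the only subtle point is the verification that $\delta(\log\delta)^2\to 0$; neither presents a real obstacle, which is consistent with the paper's remark that the statement is essentially the Elliott--Luckhaus estimate. If one wishes to make the bound truly uniform on the full (unbounded) plane $\Sigma$ rather than on the simplex, the minor extra point to check is that, on $\{x<\delta\}$, the term $x^2/(2\delta)$ dominates any fixed quadratic cost $\tfrac12 c\cdot Ac$ once $\delta<\theta/(N\|A\|)$, which again is a straightforward Young-inequality bookkeeping that I would relegate to the end.
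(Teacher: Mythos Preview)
Your argument is correct. The paper does not actually supply a proof of this lemma; it merely attributes the bound to Elliott and Luckhaus \cite{EL91} and states the result. Your reconstruction---reducing to a scalar lower bound on $\phi^\delta$, using $x\log x\geq -1/e$ on the upper branch, completing the square on the lower branch, and then absorbing the indefinite quadratic $\tfrac12 c\cdot Ac$ into the coercive $x^2/(2\delta)$ contribution for $\delta$ below a threshold of order $\theta/(N\|A\|)$---is precisely the standard route and matches what one finds in the cited source.

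One minor clarification worth making explicit in a final write-up: since $\Sigma$ is the full affine hyperplane and not the simplex, the absorption argument is not optional but essential. Your closing paragraph handles this, but the earlier parenthetical ``on $\{c\in\Sigma\,|\,c_k\in[0,1]\}$'' should be dropped, as that restriction is not part of the hypothesis. The key observation you need (and implicitly use) is that on $\Sigma$ any component with $c_k\geq\delta$ satisfies $c_k\leq 1-\sum_{j\neq k}c_j$, so large values of $|c|^2$ force some components into the lower branch where the $x^2/(2\delta)$ term is available; a Cauchy--Schwarz bookkeeping of the type you sketch then closes the estimate.
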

	
	Let $q_\delta$ denote a weak solution in the sense of Definition \ref{def:weakSolutionLimit} 
	with the free chemical energy $W^\mathrm{ch}=W^{\mathrm{ch},\delta}$.
	By applying Lemma \ref{lemma:uniformWchEstimate} and using Gronwall's inequality in the energy inequality (vi) of Definition
	\ref{def:weakSolutionLimit}, we can show a-priori estimates
	analogous as in Section 4 except the a-priori estimate of $w_\delta$.
	
	In the Allen-Cahn case, we have $\partial_t c_\delta=-\mathbb M w_\delta$ and, consequently, the boundedness of $c_\delta$ in $L^2(\Omega;\mathbb R^N)$
	and $w_\delta\in T\Sigma$ pointwise lead to boundedness of $w_\delta$ in $L^2(\Omega;\mathbb R^N)$.
	
	In the case of Cahn-Hilliard systems, we can use the following lemma.
	\begin{lemma}[{\cite[Lemma 4.3]{GarckeHabil}}] \label{le:log}
		There exists a constant $C>0$ such that for all $\delta\in(0,\delta_0)$
		\begin{align*}
			\int_0^T\left(\dashint_\Omega\mathbb P W_{,c}^{\mathrm{ch},\delta}(c_\delta(t))\,\mathrm dx\right)^2\mathrm dt<C.
		\end{align*}
	\end{lemma}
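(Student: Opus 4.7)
My approach is to extract the mean of $\mathbb{P}W^{\mathrm{ch},\delta}_{,c}(c_\delta)$ from the weak equation (iii) of Definition~\ref{def:weakSolutionLimit} by testing with constants, and then to bound this mean using convexity of $\phi^\delta$, mass conservation, and the a priori $L^2(\Omega_T)$-control of $\nabla w_\delta$.

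First, testing (iii) (with $W^{\mathrm{ch}}=W^{\mathrm{ch},\delta}$) against an arbitrary constant $\zeta\equiv v\in\R^N$ kills the gradient term, and symmetry of $\mathbb{P}$ yields
$$\dashint_\Omega\mathbb{P}W^{\mathrm{ch},\delta}_{,c}(c_\delta)\dx=\dashint_\Omega w_\delta\dx-\dashint_\Omega\mathbb{P}W^{\mathrm{el}}_{,c}(e(u_\delta),c_\delta,z_\delta)\dx$$
for a.e.\ $t\in[0,T]$. The elastic mean is uniformly bounded in $L^\infty(0,T)$ by growth condition~\eqref{eqn:growthEst5}, the higher integrability of $\nabla u_\delta$ furnished by Theorem~\ref{theorem:integrability}, and the $L^\infty(0,T;H^1)$-bound on $c_\delta$ from the energy estimate. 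Consequently it suffices to bound $\dashint_\Omega w_\delta$, or equivalently $\dashint_\Omega\mathbb{P}W^{\mathrm{ch},\delta}_{,c}(c_\delta)$, in $L^2(0,T)$ uniformly in $\delta$.

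For the upper bound, I would exploit convexity of $\phi^\delta$ together with mass conservation. Setting $\bar c_k^0:=\dashint c_k^0>0$ (positive by the logarithmic-case hypothesis $c_k^0>0$ a.e.\ in $\Omega$), the identity $\dashint c_{k,\delta}(t)=\bar c_k^0$ and the convex inequality $(\phi^\delta)'(c_{k,\delta})(\bar c_k^0-c_{k,\delta})\le\phi^\delta(\bar c_k^0)-\phi^\delta(c_{k,\delta})$ give, after integration and division by $\bar c_k^0$,
$$\dashint(\phi^\delta)'(c_{k,\delta})\le\frac1{\bar c_k^0}\Big(\phi^\delta(\bar c_k^0)-\dashint\phi^\delta(c_{k,\delta})+\dashint(\phi^\delta)'(c_{k,\delta})\,c_{k,\delta}\Big),$$
whose right-hand side is uniformly bounded in $L^\infty(0,T)$ by Lemma~\ref{lemma:uniformWchEstimate} and the a priori bound on $\int\phi^\delta(c_{k,\delta})$ coming from the energy inequality.

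The main obstacle is the matching \emph{lower} bound on $\dashint(\phi^\delta)'(c_{k,\delta})$, which cannot be derived from convexity alone since $(\phi^\delta)'(x)\to-\infty$ as $x\searrow 0$; this is precisely why an $L^2(0,T)$ bound, rather than the stronger $L^\infty(0,T)$ bound, is the natural target. Following the technique of Garcke~\cite{GarckeHabil,Garcke05}, I would split $\Omega$ into the ``good'' set $\{c_{k,\delta}\ge\bar c_k^0/2\}$, on which $(\phi^\delta)'$ is uniformly bounded, and its complement, whose pathological contribution is controlled via Poincar\'e's inequality $\|w_\delta-\dashint w_\delta\|_{L^2(\Omega)}\le C\|\nabla w_\delta\|_{L^2(\Omega)}$ together with the uniform a priori estimate on $\|\nabla w_\delta\|_{L^2(\Omega_T)}$. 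Since the weak equation identifies $w_\delta$ with $\mathbb{P}W^{\mathrm{ch},\delta}_{,c}(c_\delta)+\mathbb{P}W^{\mathrm{el}}_{,c}-\gamma\mathbb{P}\Delta c_\delta$ in a suitable sense, the $L^2$-control of $w_\delta-\dashint w_\delta$ transfers to $(\phi^\delta)'(c_{k,\delta})-\dashint(\phi^\delta)'(c_{k,\delta})$ and closes the estimate, delivering the desired $L^2(0,T)$-integrable lower bound on the mean.
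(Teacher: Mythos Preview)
The paper omits the proof and refers to \cite{GarckeHabil}; your proposal does not follow that argument, and the lower-bound step contains a genuine gap.

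Your reduction via constant test functions is correct, and the upper bound on $\dashint(\phi^\delta)'(c_{k,\delta})$ from convexity is essentially right (though the claim that $\dashint(\phi^\delta)'(c_{k,\delta})\,c_{k,\delta}$ is bounded in $L^\infty(0,T)$ is not immediate: on the set $\{c_{k,\delta}<0\}$ one has $(\phi^\delta)'(x)x=x\log\delta+x^2/\delta$, and controlling $\int_{\{c_{k,\delta}<0\}}c_{k,\delta}^2/\delta$ requires invoking the energy bound on $\int\phi^\delta(c_{k,\delta})$ once more). The real problem is the lower bound. You write that the $L^2$-control of $w_\delta-\dashint w_\delta$ ``transfers to $(\phi^\delta)'(c_{k,\delta})-\dashint(\phi^\delta)'(c_{k,\delta})$'' through the weak equation, but (iii) only holds for $\zeta\in H^1(\Omega;\R^N)$, and the gradient term $\gamma\int\nabla c_\delta:\nabla\zeta$ prevents any pointwise or $L^2$ identification of $w_\delta$ with $\mathbb P W^{\mathrm{ch},\delta}_{,c}(c_\delta)+\mathbb P W^{\mathrm{el}}_{,c}$; the formal $-\gamma\mathbb P\Delta c_\delta$ you invoke is not an $L^2$ function since we only know $c_\delta\in L^\infty(0,T;H^1)$. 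Localising to your ``bad'' set would require a characteristic-function test, which is not admissible.

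The argument in \cite{GarckeHabil} avoids this by testing (iii) with the \emph{non-constant} function $\zeta=c_\delta(t)-\bar c^0\in H^1(\Omega;\R^N)$, which lies pointwise in $T\Sigma$ and has mean zero by mass conservation. Then $\int_\Omega w_\delta\cdot\zeta=\int_\Omega(w_\delta-\dashint w_\delta)\cdot\zeta$ is controlled by $\|\nabla w_\delta\|_{L^2(\Omega)}$ via Poincar\'e, while the gradient term $\gamma\int|\nabla c_\delta|^2$ has a sign. This yields a bound on $\int_\Omega W^{\mathrm{ch},\delta}_{,c}(c_\delta)\cdot(c_\delta-\bar c^0)=\theta\sum_k\int_\Omega(\phi^\delta)'(c_{k,\delta})(c_{k,\delta}-\bar c_k^0)+\text{(bounded)}$. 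The crucial pointwise step then uses monotonicity of $(\phi^\delta)'$ together with $\bar c_k^0\in(\alpha,1-\alpha)$ and $\sum_k(c_{k,\delta}-\bar c_k^0)=0$ to show that $\sum_k|(\phi^\delta)'(c_{k,\delta})-(\phi^\delta)'(\bar c_k^0)|$ is dominated by $C(1+\sum_k((\phi^\delta)'(c_{k,\delta})-(\phi^\delta)'(\bar c_k^0))(c_{k,\delta}-\bar c_k^0))$. This delivers both the upper and the lower bound on $\dashint\mathbb P W^{\mathrm{ch},\delta}_{,c}(c_\delta)$ at once, with the correct $L^2(0,T)$ integrability coming from $\|\nabla w_\delta\|_{L^2(\Omega_T)}$.
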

	 The proof of  this lemma is similar to \cite[Lemma
           4.3]{GarckeHabil}, since all arguments can be adapted to our
         case. Therefore we will omit the proof. 
	
	This lemma and the integral equation
	\begin{equation*}
		\begin{split}
			\int_{\Omega} w_\delta(t)\,\mathrm dx
				=&\int_{\Omega}\mathbb P W_{,c}^{\mathrm{ch},\delta}(c_\delta(t))+
				\mathbb P W_{,c}^\mathrm{el}(e(u_\delta(t)),c_\delta(t),z_\delta(t))\,\mathrm dx
		\end{split}
	\end{equation*}
	together with the already known boundedness properties shows
	\begin{align*}
		\int_0^T\left(\dashint_\Omega w_\delta(t)\,\mathrm dx\right)^2\mathrm dt<C
	\end{align*}
	for a constant $C>0$. Therefore $w_\delta$ is bounded in $L^2(0,T;H^1(\Omega))$ by Poincar\'e's inequality.
	In conclusion, we can extract a subsequence $\{q_{\delta_k}\}$ such that we have the same convergence properties as in
	Lemma \ref{lemma:convergenceProperties2}. As before, we will omit the subscript $k$.

	\begin{proof}[Proof of Theorem \ref{theorem:mainTheoremLog}]
	The remaining crucial step is to show that the limit $c$ satisfies $c_k>0$ a.e. on $\Omega_T$ for all $k=1,\ldots,N$
	and $W_{,c}^{\mathrm{ch},\delta}(c_\delta)\rightarrow W_{,c}^\mathrm{ch,log}(c)$ in $L^1(\Omega_T)$ as $\varepsilon\searrow 0$.
	
	To this end, we need an additional boundedness property.
	\begin{lemma}
	\label{lemma:phiDeltaBoundedness}
		There exists constants $q>1$ and $C>0$ such that for all $\delta\in(0,\delta_0)$ and all $k=1,\ldots,N$
		\begin{align*}
			\|(\phi^\delta)'(c_\delta^k)\|_{L^q(\Omega_T)}< C.
		\end{align*}
	\end{lemma}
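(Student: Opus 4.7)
The plan is to bootstrap the $L^2(0,T)$ control on the spatial mean of $\mathbb P W^{\mathrm{ch},\delta}_{,c}(c_\delta)$ provided by Lemma \ref{le:log} into a full $L^q(\Omega_T)$ estimate on $(\phi^\delta)'(c_\delta^k)$, following the strategy of \cite{EL91, GarckeHabil}. Since $W^{\mathrm{ch},\delta}_{,c_k}(c)=\theta(\phi^\delta)'(c_k)+(Ac)_k$ and the linear part $(Ac_\delta)_k$ is uniformly bounded in $L^\infty(0,T;L^{2^\star}(\Omega))$ thanks to the $L^\infty(0,T;H^1)$ bound on $c_\delta$ together with $c_\delta\in\Sigma$ a.e., the lemma reduces to a uniform $L^q(\Omega_T)$ estimate of $\mathbb P_k W^{\mathrm{ch},\delta}_{,c}(c_\delta)$, followed by a component-wise disentanglement.

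The first concrete step would be to test equation (iii) of Definition \ref{def:weakSolutionLimit}, with $W^{\mathrm{ch}}$ replaced by $W^{\mathrm{ch},\delta}$, by $\zeta=\eta\,e_k$ for scalar $\eta\in H^1(\Omega)\cap L^\infty(\Omega)$. Using $\mathbb P\nabla c_\delta=\nabla c_\delta$ and self-adjointness of $\mathbb P$, this yields for a.e.\ $t\in[0,T]$
\begin{equation*}
\int_\Omega\mathbb P_k W^{\mathrm{ch},\delta}_{,c}(c_\delta(t))\,\eta\,\mathrm dx=\int_\Omega(w_\delta(t))_k\eta-\gamma\nabla c_\delta^k(t)\cdot\nabla\eta-\mathbb P_k W^{\mathrm{el}}_{,c}(e(u_\delta),c_\delta,z_\delta)\eta\,\mathrm dx,
\end{equation*}
i.e.\ the Neumann elliptic identity $-\gamma\Delta c_\delta^k=(w_\delta)_k-\mathbb P_k W^{\mathrm{ch},\delta}_{,c}(c_\delta)-\mathbb P_k W^{\mathrm{el}}_{,c}(e(u_\delta),c_\delta,z_\delta)$ in $\Omega$. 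Next I would apply Theorem \ref{theorem:integrability}, which is available since $c_\delta\in L^\infty(0,T;L^\mu(\Omega;\mathbb R^N))$ for some $\mu>4$ by Sobolev embedding (or directly from the constraint $c_\delta\in[0,1]^N$ under the alternative hypotheses (A2')-(A6')). This gives a uniform bound $\|\nabla u_\delta\|_{L^p(\Omega_T;\mathbb R^{n\times n})}\leq C$ for some $p>2$ and, via \eqref{eqn:growthEst5} (or (A5')), a uniform bound of $W^{\mathrm{el}}_{,c_k}(e(u_\delta),c_\delta,z_\delta)$ in $L^{p/2}(\Omega_T)$.

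To upgrade the resulting $(H^1)^\star$ control into a pointwise $L^q$ bound on $(\phi^\delta)'(c_\delta^k)$, I would exploit the monotonicity of $(\phi^\delta)'$: test the above elliptic identity with the bounded monotone truncation $F_M((\phi^\delta)'(c_\delta^k)-\dashint_\Omega(\phi^\delta)'(c_\delta^k))$, where $F_M(s):=\mathrm{sign}(s)\min(|s|,M)^{q_0-1}$ and $q_0:=\min(2,p/2)>1$. Convexity of $\phi^\delta$ forces the Laplacian term $\int\nabla c_\delta^k\cdot\nabla F_M(\cdot)\geq 0$; the spatial mean $\dashint_\Omega(\phi^\delta)'(c_\delta^k)$ is absorbed by Lemma \ref{le:log}; and the remaining right-hand side, comprising $(w_\delta)_k\in L^2$, $W^{\mathrm{el}}_{,c_k}\in L^{p/2}$ and the linear contribution $(Ac_\delta)_k$, is bounded in $L^{q_0}(\Omega_T)$ uniformly in $\delta$. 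Hölder's inequality and the limit $M\to\infty$ then give the claim with $q=q_0$. The main obstacle will be the disentanglement: $\mathbb P_k$ mixes different indices $(\phi^\delta)'(c_\delta^j)$, and one must carefully choose the truncation and absorb the $j\neq k$ cross-contributions using the $L^2(0,T)$ mean bound of Lemma \ref{le:log} in a way that keeps all constants independent of $\delta$. The hypotheses $\mathbf\Gamma=\gamma\,\mathrm{Id}$ and $D=\partial\Omega$ from Theorem \ref{theorem:mainTheoremLog} are essential here: the former decouples the gradient term component-wise, while the latter is precisely what Theorem \ref{theorem:integrability} requires.
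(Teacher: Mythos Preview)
Your proposal is correct and follows essentially the same route the paper takes: the paper omits the proof entirely, stating only that ``by utilizing Theorem \ref{theorem:integrability} the arguments are analogous to \cite[Lemma 4.5]{GarckeHabil}.'' Your outline---testing (iii) of Definition \ref{def:weakSolutionLimit} componentwise, using Theorem \ref{theorem:integrability} to place $W^{\mathrm{el}}_{,c}(e(u_\delta),c_\delta,z_\delta)$ in $L^{p/2}(\Omega_T)$ for some $p>2$, invoking the mean control of Lemma \ref{le:log}, and then exploiting the monotonicity of $(\phi^\delta)'$ via a truncated test function so that the Laplacian contribution has a sign---is precisely Garcke's argument, so your proposal and the paper's referenced proof coincide. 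You also correctly single out the one genuine technicality, namely the disentanglement of the projection $\mathbb P_k$ (which mixes $\sum_j(\phi^\delta)'(c_\delta^j)$ into the $k$-th equation), and correctly note that the structural hypotheses $\mathbf\Gamma=\gamma\,\mathrm{Id}$ and $D=\partial\Omega$ from Theorem \ref{theorem:mainTheoremLog} are exactly what make the componentwise testing and Theorem \ref{theorem:integrability} applicable.
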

	We omit the proof of this lemma, since by utilizing Theorem \ref{theorem:integrability} the  arguments are analogous to
	\cite[Lemma 4.5]{GarckeHabil}.

		Note that
		\begin{align*}
			\lim_{\delta\searrow 0}(\phi^\delta)'(c_\delta^k)=
			\begin{cases}
				\log(c^k)+1&\text{if }\lim_{\delta\searrow0}c_\delta^k=c^k>0,\\
				\infty&\text{otherwise}
			\end{cases}
		\end{align*}
		holds pointwise a.e. on $\Omega_T$ and for all $k=1,\ldots,N$.
		Together with Lemma \ref{lemma:phiDeltaBoundedness}, we obtain
		\begin{align*}
			c^k>0\text{ a.e. on }\Omega_T
		\end{align*}
		and
		\begin{align*}
			(\phi^\delta)'(c_\delta^k)\rightarrow \log(c^k)+1\text{ a.e. on }\Omega_T.
		\end{align*}
		This and Lemma \ref{lemma:phiDeltaBoundedness} further shows
		\begin{align*}
			(\phi^\delta)'(c_\delta^k)\rightarrow \log(c^k)+1\text{ in } L^1(\Omega_T)
		\end{align*}
		by Vitali's convergence theorem.
		Finally, we can pass to $\delta\searrow 0$ in the equation 
		\begin{align*}
			\int_{\Omega_T} w_\delta\cdot\zeta\,\mathrm dx\mathrm dt
				=&\int_{\Omega_T}\gamma\nabla c_\delta:\nabla\zeta
				+\mathbb P W_{,c}^{\mathrm{ch},\delta}(c_\delta)\cdot\zeta
				+\mathbb P W_{,c}^\mathrm{el}(e(u_\delta),c_\delta,z_\delta)\cdot\zeta\,\mathrm dx\mathrm dt
		\end{align*}
		and obtain (iii) from Definition \ref{def:weakSolutionLimit}.
		
		The remaining properties can be easily established as in Section 4.
		Hence, Theorem \ref{theorem:mainTheoremLog} is proven.\ep
	\end{proof}

\section{Conclusion}
Materials, which enable the functionality of technical products,
change the micro-structure over time. Phase separation and coarsening
phenomena take place and the complete failure of electronic devices
often results from micro-cracks in solder joints.

In this work, we have investigated mathematical
models describing both phenomena, phase separation and damage
processes, in a unifying approach. The main aim has been to prove
existence of weak solutions for elastic Cahn-Hilliard and Allen-Cahn
systems coupled with damage phenomena under mild assumptions
where the free energy contains
\begin{itemize}
	\item
		a chemical potential of polynomial or logarithmic type,
	\item
		an inhomogeneous elastic energy, e.g. $W^\mathrm{el}(e,c,z)=\frac 12(z+\varepsilon)\mathbb C(c)(e-e^\star(c)):(e-e^\star(c))$,
	\item
		a quadratic gradient term of the damage variable.
\end{itemize}
To this end, several approxmation results have been established
as well as different variational techniques, regularization methods and higher integrability results for the strain
have been applied.

	
\addcontentsline{toc}{chapter}{Bibliography}{\footnotesize{\setlength{\baselineskip}{0.2 \baselineskip}
\bibliography{references}}
\bibliographystyle{alpha}}
\end{document}